\renewcommand{\leq}{\leqslant}
\renewcommand{\geq}{\geqslant}
\newcommand{\concat}{\overset{\frown}{}}
\newcommand{\One}{\text{I}}
\newcommand{\Two}{\text{II}}
\newcommand{\cof}{\text{cof}}
\theoremstyle{definition}
\newtheorem{theorem}{Theorem}
\newtheorem{definition}{Definition}
\newtheorem{lemma}[theorem]{Lemma}
\newtheorem{corollary}[theorem]{Corollary}
\newtheorem{proposition}[theorem]{Proposition}
\newtheorem{remark}{Remark}
\newtheorem{note}{Note}
\newtheorem*{notn}{Notation}
\title{Selection Games on Continuous Functions}
\author{Christopher Caruvana and Jared Holshouser}
\date{\today}
\begin{document}

\maketitle

\begin{abstract}
    In this paper we study the selection principle of closed discrete selection, first researched by Tkachuk in \cite{Tkachuk2018} and strengthened by Clontz, Holshouser in \cite{ClontzHolshouser}, in set-open topologies on the space of continuous real-valued functions.
    Adapting the techniques involving point-picking games on \(X\) and \(C_p(X)\), the current authors showed similar equivalences in \cite{CaruvanaHolshouser} involving the compact subsets of \(X\) and \(C_k(X)\).
    By pursuing a bitopological setting, we have touched upon a unifying framework which involves three basic techniques: general game duality via reflections (Clontz), general game equivalence via topological connections, and strengthening of strategies (Pawlikowski and Tkachuk).
    Moreover, we develop a framework which identifies topological notions to match with generalized versions of the point-open game.
\end{abstract}

\section{Introduction}

The closed discrete selection principle was first studied by Tkachuk in 2017.
This property occurs naturally in the course of studying functional analysis.
Tkachuk connected this selection principle on \(C_p(X)\) with topological properties of \(X\).
He then went on to consider the corresponding selection game, creating a partial characterization of winning strategies in that game and finding connections between it, the point-open game on \(X\), and Gruenhage's \(W\)-game on \(C_p(X)\) \cite{TkachukGame}.
In 2019, Clontz and Holshouser \cite{ClontzHolshouser} finished this characterization, showing that the discrete selection game on \(C_p(X)\) is equivalent to a modification of the point-open game on \(X\).
Clontz and Holshouser show this not only for full information strategies but also for limited information strategies.

The current authors continued this work, researching the closed discrete game on \(C_k(X)\), the real-valued continuous functions with the compact open topology \cite{CaruvanaHolshouser}.
They show that similar connections exist in this setting, with the point-open game on \(X\) replaced by the compact-open game.
They also isolated general techniques which have use beyond the study of closed discrete selections.

In this paper, we study the problem of closed discrete selection in the general setting of set-open topologies on the space of continuous functions.
We use closed discrete selection as a tool not only for comparing \(X\) to its space of continuous functions, but also for comparing different set-open topologies to each other.
To establish these connections, we prove general statements in three categories:
\begin{enumerate}
    \item strengthening the strategies in games,
    \item criteria for games to be dual,
    \item characterizations of strong strategies in abstract point-open games,
\end{enumerate}
and use work of Clontz \cite{ClontzDuality} to show that some general classes of games are equivalent.

In this version, we have
\begin{itemize}
    \item
    identified that Lemmas \ref{lemma:PawlikowskiA} and \ref{lemma:Pawlikowski}, as stated, are not known to be true yet and reference \href{https://arxiv.org/abs/2102.00296}{arXiv:2102.00296} for a revised proof for \(k\)-covers.
    \item
    corrected a slight error in the statement of Theorem \ref{MD2} which relied on Lemma \ref{lemma:Pawlikowski}.
\end{itemize}

\section{Definitions and Preliminaries}

\begin{definition}
    Let \(X\) be a space and \(\mathcal A \subseteq \wp(X)\).
    We say that \(\mathcal A\) is an \textbf{ideal-base} if, for \(A_1, A_2 \in \mathcal A\), there exists \(A_3 \in \mathcal A\) so that \(A_1 \cup A_2 \subseteq A_3\).
\end{definition}

\begin{definition}
    For a topological space \(X\) and a collection \(\mathcal A \subseteq \wp(X)\), we let \(\bar{\mathcal A} = \{ \text{cl}_X(A) : A \in \mathcal A \}\).
\end{definition}

\begin{definition}
    Fix a topological space \(X\) and a collection \(\mathcal A \subseteq \wp(X)\). Then
    \begin{itemize}
        \item we let \(C_p(X)\) denote the set of all continuous functions \(X \to \mathbb R\) endowed with the topology of point-wise convergence; we also let \(\mathbf 0\) be the function which identically zero.
        \item we let \(C_k(X)\) denote the set of all continuous functions \(X \to \mathbb R\) endowed with the topology of uniform convergence on compact subsets of \(X\); we will write
	    \[
	    [f;K,\varepsilon] = \left\{ g \in C_k(X) : \sup\{ |f(x)-g(x)| : x \in K \} < \varepsilon \right\}
	    \]
	    for \(f \in C_k(X)\), \(K \subseteq X\) compact, and \(\varepsilon > 0\),
	    \item in general, we let \(C_{\mathcal A}(X)\) denote the set of all continuous functions \(X \to \mathbb R\) endowed with the \(\mathcal A\)-open topology; we will write
	    \[
	    [f;A,\varepsilon] = \left\{ g \in C_{\mathcal A}(X) : \sup\{ |f(x)-g(x)| : x \in A \} < \varepsilon \right\}
	    \]
	    for \(f \in C_{\mathcal A}(X)\), \(A \in \mathcal A\), and \(\varepsilon > 0\),
    \end{itemize}
    Notice that, for the sets of the form \([f;A,\varepsilon]\) to be a base for the topology \(C_{\mathcal A}(X)\), then \(\mathcal A\) must be an ideal-base.
\end{definition}
\begin{definition}
	For a topological space \(X\), we let \(K(X)\) denote the family of all non-empty compact subsets of \(X\).
\end{definition}
\begin{definition}
    Let \(X\) be a topological space.
    We say that \(A \subseteq X\) is \textbf{\(\mathbb R\)-bounded} if, for every continuous \(f : X \to \mathbb R\), \(f[A]\) is bounded.
\end{definition}

%%%%%%%%%%%%%%%%%%%%%%%%%%%%%%%%%%%%%%%%%%%%%%%%%%%%%%%%%%%%%%%%%%%%%%%%%%%%%%%%%%%%%%%%%%%%%%%%%%%%%%%%%%%%%%%%%
In this paper, we will be concerned with selection principles and related games.
For classical results, basic tools, and notation, the authors recommend \cite{SakaiScheppers} and \cite{KocinacSelectedResults}.
%%%%%%%%%%%%%%%%%%%%%%%%%%%%%%%%%%%%%%%%%%%%%%%%%%%%%%%%%%%%%%%%%%%%%%%%%%%%%%%%%%%%%%%%%%%%%%%%%%%%%%%%%%%%%%%%%

\begin{definition}
    Consider collections \(\mathcal A\) and \(\mathcal B\) and an ordinal \(\alpha\).
    The corresponding selection principles are defined as follows:
    \begin{itemize}
        \item \(S_{\text{fin}}^\alpha(\mathcal A, \mathcal B)\) is the assertion that, given any \(\{A_\xi : \xi \in \alpha\} \subseteq \mathcal A\), there exists \(\{ \mathcal F_\xi : \xi \in \alpha \}\) so that, for each \(\xi \in \alpha\), \(\mathcal F_\xi\) is a finite subset of \(A_\xi\) (denoted as \(\mathcal F_\xi \in [A_\xi]^{<\omega}\) hereinafter) and \(\bigcup\{ \mathcal F_\xi : \xi \in \alpha \} \in \mathcal B\), and
        \item \(S_1^\alpha(\mathcal A, \mathcal B)\) is the assertion that, given any \(\{A_\xi : \xi \in \alpha\} \subseteq \mathcal A\), there exists \(\{ x_\xi : \xi \in \alpha \}\) so that, for each \(\xi \in \alpha\), \( x_\xi \in A_\xi\) and \(\{ x_\xi : \xi \in \alpha \} \in \mathcal B\).
    \end{itemize}
    We suppress the superscript when \(\alpha = \omega\); i.e., \(S_1(\mathcal A, \mathcal B) = S^\omega_1(\mathcal A, \mathcal B)\).
\end{definition}

\begin{definition}
	Let \(X\) be a topological space and \(\mathscr U\) be an open cover of \(X\) with \(X \notin \mathscr U\). Recall that
	\begin{itemize}
	    \item \(\mathscr U\) is said to be a \textbf{\(\Lambda\)-cover} if, for every \(x \in X\), \(\{ U \in \mathscr U : x \in U \}\) is infinite,
	    \item \(\mathscr U\) is an \textbf{\(\omega\)-cover} of \(X\) provided that given any finite subset \(F\) of \(X\), there exists some \(U \in \mathscr U\) so that \(F \subseteq U\),
	    \item \(\mathscr U\) is said to be a \textbf{\(\gamma\)-cover} if \(\mathscr U\) is an infinite \(\omega\)-cover and for every finite subset \(F \subseteq X\), \(\{ U \in \mathscr U : F \not\subseteq U \}\) is finite,
	    \item \(\mathscr U\) is a \textbf{\(k\)-cover} of \(X\) provided that given any compact subset \(K\) of \(X\), there exists some \(U \in \mathscr U\) so that \(K \subseteq U\), and
	    \item \(\mathscr U\) is said to be a \textbf{\(\gamma_k\)-cover} if \(\mathscr U\) is an infinite \(k\)-cover and for every compact \(K \subseteq X\), \(\{ U \in \mathscr U : K \not\subseteq U \}\) is finite.
	\end{itemize}
	Note that if \(\mathscr U = \{U_n : n \in \omega\}\), then \(\mathscr U\) is a \(\gamma_k\)-cover if and only if every cofinal sequence of the \(U_n\) form an \(k\)-cover.

	For a family of sets \(\mathcal A\), let
	\begin{itemize}
	    \item \(\mathcal O(X, \mathcal A)\) to be all open covers \(\mathscr U\) so that \(X\not\in\mathscr U\) and for every \(A \in \mathcal{A}\), there is an open set \(U \in \mathscr U\) which contains \(A\),
        \item \(\Lambda(X, \mathcal A)\) be all open covers \(\mathscr U\) so that \(X\not\in\mathscr U\), and for all \(A \in \mathcal A\), there are infinitely many \(U \in \mathscr U\) so that \(A \subseteq U\), and
        \item \(\Gamma(X, \mathcal A)\) to be all infinite open covers \(\mathscr U\) so that \(X\not\in\mathscr U\) and for every \(A \in \mathcal{A}\), \(\{ U \in \mathscr U : A \not\subseteq U\}\) is finite.
		\end{itemize}
\end{definition}

\begin{remark}
    Note that
    \begin{itemize}
        \item
		\(\mathcal O(X,[X]^{<\omega}) = \Omega_X\) denotes the collection of all \(\omega\)-covers of \(X\).
		\item
		\(\mathcal O(X, K(X)) = \mathcal K_X\) denotes the collection of all \(k\)-covers of \(X\).
		\item
		\(\Gamma(X, K(X)) = \Gamma_k(X)\) denotes the collection of all \(\gamma_k\)-covers of \(X\).
    \end{itemize}
\end{remark}

\begin{notn}
	We let
	\begin{itemize}
	    \item
	    For any collection \(\mathcal A\), \(\neg \mathcal A\) is the complement of \(\mathcal A\).
	    \item
	    \(\mathscr T_X\) denote the set of all non-empty subsets of \(X\).
	    \item
	    \(\Omega_{X,x}\) denote the set of all \(A \subseteq X\) with \(x \in \text{cl}_X(A)\).
	    We also call \(A \in \Omega_{X,x}\) a \textbf{blade} of \(x\).
	    \item
	    \(\Gamma_{X,x}\) denote the set of all sequences \(\{x_n : n \in \omega\} \subseteq X\) with \(x_n \to x\).
	    \item
	    \(\mathcal D_X\) denote the collection of all dense subsets of \(X\).
	    \item
	    \(\text{CD}_X\) denote the collection of all closed and discrete subsets of \(X\).
		\item
		\(\mathcal O_X\) denote the collection of all open covers of \(X\).
		\item
		\(\Lambda_X\) denote the collection of all \(\lambda\)-covers of \(X\).
		\item
		\(\Gamma_X\) denote the collection of all \(\gamma\)-covers of \(X\).
	\end{itemize}
\end{notn}

We can create variations of selection principles and their negations by looking at selection games.

\begin{definition}
	Given a set \(\mathcal A\) and another set \(\mathcal B\), we define the \textbf{finite selection game} \(G^\alpha_{\text{fin}}(\mathcal A, \mathcal B)\) for \(\mathcal A\) and \(\mathcal B\) as follows:
	\[
		\begin{array}{c|cccccc}
			\text{I} & A_0 & A_1 & A_2 & \cdots & A_\xi & \cdots\\
			\hline
			\text{II} & \mathcal F_0 & \mathcal F_1 & \mathcal F_2 & \cdots & \mathcal F_\xi & \cdots
		\end{array}
	\]
	where \(A_\xi \in \mathcal A\) and \(\mathcal F_\xi \in [A_\xi]^{<\omega}\) for all \(\xi < \alpha\).
	We declare Two the winner if \(\bigcup\{ \mathcal F_\xi : \xi < \alpha \} \in \mathcal B\).
	Otherwise, One wins.
	We let \(G_{\text{fin}}(\mathcal A, \mathcal B)\) denote \(G^\omega_{\text{fin}}(\mathcal A, \mathcal B)\).
\end{definition}

\begin{definition}
	Similarly, we define the \textbf{single selection game} \(G^\alpha_1(\mathcal A, \mathcal B)\) as follows:
	\[
		\begin{array}{c|cccccc}
			\text{I} & A_0 & A_1 & A_2 & \cdots & A_\xi & \cdots\\
			\hline
			\text{II} & x_0 & x_1 & x_2 & \cdots & x_\xi & \cdots
		\end{array}
	\]
	where each \(A_\xi \in \mathcal A\) and \(x_\xi \in A_\xi\).
	We declare Two the winner if \(\{ x_\xi : \xi \in \alpha \} \in \mathcal B\).
	Otherwise, One wins.
	We let \(G_{1}(\mathcal A, \mathcal B)\) denote \(G^\omega_{1}(\mathcal A, \mathcal B)\).
\end{definition}

\begin{definition}
    We define strategies of various strength below.
    \begin{itemize}
    \item A \textbf{strategy for player One} in \(G^\alpha_1(\mathcal A, \mathcal B)\) is a function \(\sigma:(\bigcup \mathcal A)^{<\alpha} \to \mathcal A\).
    A strategy \(\sigma\) for One is called \textbf{winning} if whenever \(x_\xi \in \sigma\langle x_\zeta : \zeta < \xi \rangle\) for all \(\xi < \alpha\), \(\{x_\xi:\xi \in \alpha\} \not\in \mathcal B\).
    If player One has a winning strategy, we write \(\One \uparrow G^\alpha_1(\mathcal A, \mathcal B)\).
    \item A strategy for player Two in \(G^\alpha_1(\mathcal A, \mathcal B)\) is a function \(\tau:\mathcal A^{<\alpha} \to \bigcup \mathcal A\).
    A strategy \(\tau\) for Two is \textbf{winning} if whenever \(A_\xi \in \mathcal A\) for all \(\xi < \alpha\), \(\{\tau(A_0,\cdots,A_\xi) : \xi < \alpha\} \in \mathcal B\).
    If player Two has a winning strategy, we write \(\Two \uparrow G^\alpha_1(\mathcal A, \mathcal B)\).
    \item A \textbf{predetermined strategy} for One is a strategy which only considers the current turn number.
    We call this kind of strategy predetermined because One is not reacting to Two's moves, they are just running through a pre-planned script. Formally it is a function \(\sigma:\alpha \to \mathcal A\).
    If One has a winning predetermined strategy, we write \(\One \underset{\text{pre}}{\uparrow} G^\alpha_1(\mathcal A, \mathcal B)\).
    \item A \textbf{Markov strategy} for Two is a strategy which only considers the most recent move of player One and the current turn number.
    Formally it is a function \(\tau:\mathcal A \times \alpha \to \bigcup \mathcal A\).
    If Two has a winning Markov strategy, we write \(\Two \underset{\text{mark}}{\uparrow} G^\alpha_1(\mathcal A, \mathcal B)\).
    \end{itemize}
\end{definition}

\begin{definition}
    Two games \(\mathcal G_1\) and \(\mathcal G_2\) are said to be \textbf{strategically dual} provided that the following two hold:
    \begin{itemize}
        \item \(\text{I} \uparrow \mathcal G_1 \text{ iff } \text{II} \uparrow \mathcal G_2\)
        \item \(\text{I} \uparrow \mathcal G_2 \text{ iff } \text{II} \uparrow \mathcal G_1\)
    \end{itemize}
    Two games \(\mathcal G_1\) and \(\mathcal G_2\) are said to be \textbf{Markov dual} provided that the following two hold:
    \begin{itemize}
        \item \(\text{I} \underset{\text{pre}}{\uparrow} \mathcal G_1 \text{ iff } \text{II} \underset{\text{mark}}{\uparrow} \mathcal G_2\)
        \item \(\text{I} \underset{\text{pre}}{\uparrow} \mathcal G_2 \text{ iff } \text{II} \underset{\text{mark}}{\uparrow} \mathcal G_1\)
    \end{itemize}
    Two games \(\mathcal G_1\) and \(\mathcal G_2\) are said to be \textbf{dual} provided that they are both strategically dual and Markov dual.
\end{definition}

\begin{remark}
    In general, \(S_1^\alpha(\mathcal A, \mathcal B)\) holds if and only if \(\text{I} \underset{\text{pre}}{\not\uparrow} G_1^\alpha(\mathcal{A},\mathcal{B})\).
    See \cite[Prop. 13]{ClontzHolshouser}.
\end{remark}

\begin{remark}
	The game \(G_{\text{fin}}(\mathcal O_X,\mathcal O_X)\) is the well-known Menger game and the game \(G_1(\mathcal O_X, \mathcal O_X)\) is the well-known Rothberger game.
\end{remark}

\begin{notn}
	For \(A \subseteq X\), let \(\mathscr N(A)\) be all open sets \(U\) so that \(A \subseteq U\).
	Set \(\mathscr N[X] = \{\mathscr N_x :x \in X\}\), and in general if \(\mathcal A\) is a collection of subsets of \(X\), then \(\mathscr N[\mathcal A] = \{\mathscr N(A) :A \in \mathcal{A}\}\).
	In the case when \(X\) and \(X^\prime\) represent two topologies on the same underlying set, we will use the notation \(\mathscr N_X(A)\) to denote the collection of open sets relative to the topology according to \(X\) that contain \(A\).
\end{notn}

\begin{remark}
	The game \(G_1(\mathscr N[X],\neg \mathcal O_X)\) is the well-known point-open game first appearing in \cite{Galvin1978}: player One is trying to build an open cover and player Two is trying to avoid building an open cover. The game \(G_1(\mathscr N[K(X)], \neg \mathcal O_X)\) is the compact-open game.

	Generally, when \(\mathscr N[\mathcal A]\) is being used in a game, we will use the identification of \(A\) with \(\mathscr N(A)\) to simplify notation.
    Particularly, One picks \(A \in \mathcal A\) and Two's response will be an open set \(U\) so that \(A \subseteq U\).
\end{remark}

\begin{definition}
    A topological space \(X\) is called \textbf{discretely selective} if, for any sequence \(\{ U_n : n \in \omega \}\) of non-empty open sets, there exists a closed discrete set \(\{x_n : n \in \omega\} \subseteq X\) so that \(x_n \in U_n\) for each \(n \in \omega\); i.e. \(S_1(\mathscr T_X, {\text{CD}}_X)\) holds.
    This notion was first isolated by Tkachuk in \cite{Tkachuk2018}.
\end{definition}
\begin{definition}
	\label{definition:ClosedDiscrete}
	For a topological space \(X\), the \textbf{closed discrete selection game} on \(X\), is \(G_1(\mathscr T_X, {\text{CD}}_X)\).
	Tkachuk studies this game in \cite{TkachukGame}.
\end{definition}
Note that \(X\) is discretely selective if and only if \(\text{I} \underset{\text{pre}}{\not\uparrow} G_1(\mathscr T_X, {\text{CD}}_X)\).

\begin{remark}
	\label{definition:GruenhageGame}
	For a topological space \(X\) and \(x\in X\), \textbf{Gruenhage's \(W\)-game} for \(X\) at \(x\) is \(G_1(\mathscr N(x), \neg \Gamma_{X,x})\) and \textbf{Gruenhage's clustering game} for \(X\) at \(x\) is \(G_1(\mathscr N(x), \neg \Omega_{X,x})\).
\end{remark}

\begin{definition}
    Suppose \((P, \leq)\) is a partially ordered set and \(\mathcal A, \mathcal B \subseteq P\). Then \textbf{\(\mathcal A\) has cofinality \(\kappa\) relative to \(\mathcal B\)}, denoted
    \[
    \mbox{cof}(\mathcal A; \mathcal B, \leq) = \kappa,
    \]
    if \(\kappa\) is the minimum cardinal so that there is a collection \(\{A_\alpha : \alpha < \kappa\} \subseteq \mathcal A\) with the property that whenever \(B \in \mathcal B\), there is an \(\alpha\) so that \(B \leq A_\alpha\). If there is no such cardinal don't define the cofinality.
\end{definition}

\begin{definition}
    Suppose \((P, \leq)\) and \((Q, \leq^*)\) are partial orders and \(\mathcal A, \mathcal B \subseteq P\), \(\mathcal C, \mathcal D \subseteq Q\). Then
    \[
    (\mathcal A; \mathcal B, \leq) \geq_T (\mathcal C; \mathcal D, \leq^*)
    \]
    if there is a map \(\varphi:\mathcal A \to \mathcal C\) so that whenever \(\mathcal F \subseteq \mathcal A\) is cofinal relative to \(\mathcal B\), then \(\varphi[\mathcal F]\) is cofinal relative to \(\mathcal D\).
    This definition is inspired by Paul Gartside and Ana Mamatelashvili's work on the Tukey order \cite{Gartside}.
\end{definition}

Suppose \((P , \leq)\) is a partially ordered set.
We define \(\leq\) on \(P \times \omega\) by
\[
    (p , n) \leq (q, m) \Longleftrightarrow (p \leq q \text{ and } n \leq m).
\]

\begin{lemma}
    For any partially ordered set \((P, \leq)\) and any \(Q \subseteq P\), \((Q \times \omega,P \times \omega) \geq_T (Q , P)\).
\end{lemma}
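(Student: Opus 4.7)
The plan is to use the projection $\varphi : Q \times \omega \to Q$ defined by $\varphi(q,n) = q$. This is the only natural candidate given the data, and I expect it to work essentially because the product order in $P \times \omega$ forces any cofinal set to already project onto something cofinal in $P$.

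To verify, I would take an arbitrary $\mathcal F \subseteq Q \times \omega$ that is cofinal relative to $P \times \omega$ under the product order, and show that $\varphi[\mathcal F] \subseteq Q$ is cofinal relative to $P$. Given any $p \in P$, I would apply the hypothesis to the element $(p,0) \in P \times \omega$: there is some $(q,n) \in \mathcal F$ with $(p,0) \leq (q,n)$, which by definition of the product order says $p \leq q$ and $0 \leq n$. The first inequality gives the required domination, since $q = \varphi(q,n) \in \varphi[\mathcal F]$.

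I do not foresee a serious obstacle; the content of the lemma is essentially the observation that the $\omega$ factor is harmless in the presence of its minimum element $0$. The only thing to be careful about is parsing the definition of $\geq_T$ correctly: one must preserve cofinality of subsets, not cofinality of the whole poset, and the map goes from the ``larger'' side $\mathcal A = Q \times \omega$ down to the ``smaller'' side $\mathcal C = Q$.
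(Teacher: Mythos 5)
Your proof is correct and is essentially the paper's own argument: use the first-coordinate projection and apply cofinality to the element $(p,0)$, exploiting that $0$ is the least element of $\omega$. (The paper's write-up defines the projection on all of $P\times\omega$ and swaps which of the two posets is being dominated, but the underlying idea is identical, and your reading of the relative-cofinality roles matches the lemma as stated.)
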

\begin{proof}
    Let \(\phi : P \times \omega \to P\) be defined by \(\phi(p,n) = p\).
    Suppose \(A \subseteq P \times \omega\) is cofinal for \(Q \times \omega\) and let \(q \in Q\) be arbitrary.
    By the cofinality of \(A\), we can find \((r,m) \in A\) so that \((q,0) \leq (r,m)\).
    It follows that \(q \leq r = \phi(r,m)\) which demonstrates that \(\phi[A]\) is cofinal for \(Q\).
\end{proof}

\begin{lemma}
    Suppose \((P, \leq)\) and \((Q, \leq^*)\) are partial orders, \(\mathcal A, \mathcal B \subseteq P\), and \(\mathcal C, \mathcal D \subseteq Q\). Suppose further that \((\mathcal A; \mathcal B, \leq) =_T (\mathcal C; \mathcal D, \leq^*)\) and \(\cof(\mathcal A; \mathcal B, \leq) = \kappa\). Then \(\cof(\mathcal C; \mathcal D, \leq^*) = \kappa\).
\end{lemma}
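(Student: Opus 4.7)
The plan is a short two-direction argument using the definitions directly. The equivalence $(\mathcal A; \mathcal B, \leq) =_T (\mathcal C; \mathcal D, \leq^*)$ unpacks into a pair of Tukey-style maps, and I intend to use each direction to bound one cofinality by the other.

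First I would handle the upper bound $\cof(\mathcal C; \mathcal D, \leq^*) \leq \kappa$. By assumption, $(\mathcal A; \mathcal B, \leq) \geq_T (\mathcal C; \mathcal D, \leq^*)$, so fix a witnessing map $\varphi : \mathcal A \to \mathcal C$. Let $\{A_\alpha : \alpha < \kappa\} \subseteq \mathcal A$ witness $\cof(\mathcal A; \mathcal B, \leq) = \kappa$; in particular this collection is cofinal in $\mathcal A$ relative to $\mathcal B$. By the defining property of $\varphi$, the image $\{\varphi(A_\alpha) : \alpha < \kappa\} \subseteq \mathcal C$ is cofinal relative to $\mathcal D$, so there exists a cofinal subset of $\mathcal C$ of size at most $\kappa$, and hence $\cof(\mathcal C; \mathcal D, \leq^*)$ is defined and at most $\kappa$.

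Next I would establish the reverse inequality $\kappa \leq \cof(\mathcal C; \mathcal D, \leq^*)$. By the other half of $=_T$, there is a map $\psi : \mathcal C \to \mathcal A$ such that $\psi$-images of cofinal sets are cofinal. Writing $\lambda = \cof(\mathcal C; \mathcal D, \leq^*)$ and choosing a witnessing family $\{C_\beta : \beta < \lambda\} \subseteq \mathcal C$, we obtain that $\{\psi(C_\beta) : \beta < \lambda\} \subseteq \mathcal A$ is cofinal relative to $\mathcal B$. Therefore $\kappa = \cof(\mathcal A; \mathcal B, \leq) \leq \lambda$, and combining with the previous paragraph yields $\cof(\mathcal C; \mathcal D, \leq^*) = \kappa$.

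There is essentially no hard step here: the lemma is a direct consequence of the definitions, with the only mild subtlety being to note that the first paragraph not only bounds $\cof(\mathcal C; \mathcal D, \leq^*)$ but also certifies that it is defined (so that the minimum in the definition makes sense), which is needed before writing $\lambda$ in the second paragraph.
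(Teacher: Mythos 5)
Your proposal is correct and follows essentially the same route as the paper: apply the map $\varphi:\mathcal A \to \mathcal C$ to a witnessing cofinal family to get $\cof(\mathcal C;\mathcal D,\leq^*)\leq\kappa$, and use the reverse map $\psi:\mathcal C\to\mathcal A$ on a minimal cofinal family in $\mathcal C$ to get the other inequality (the paper phrases this second step as a proof by contradiction, while you argue it directly, which is an immaterial difference). Your remark that the first step also certifies the cofinality is defined is a fine, slightly more careful touch.
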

\begin{proof}
    Let \(\varphi:\mathcal A \to \mathcal C\) be so that whenever \(\mathcal F \subseteq \mathcal A\) is cofinal for \(\mathcal B\), then \(\varphi[\mathcal F]\) is cofinal for \(\mathcal D\).
    Also let \(\mathcal F = \{A_\alpha : \alpha < \kappa\} \subseteq \mathcal A\) be cofinal for \(\mathcal B\).
    Then \(\varphi[\mathcal F]\) is a subset of \(\mathcal C\) and is cofinal for \(\mathcal D\).
    Thus \(\cof(\mathcal C; \mathcal D, \leq^*) \leq \kappa\).

    Suppose towards a contradiction that \(\cof(\mathcal C; \mathcal D, \leq^*) = \lambda < \kappa\).
    Then we can find a collection \(\mathcal G = \{C_\alpha : \alpha < \lambda\} \subseteq \mathcal C\) which is cofinal for \(\mathcal D\).
    Now let \(\psi:\mathcal C \to \mathcal A\) witness that \((\mathcal C; \mathcal D, \leq^*) \geq_T (\mathcal A; \mathcal B, \leq)\).
    Then \(\psi[\mathcal G] \subseteq \mathcal A\) and is cofinal for \(\mathcal B\).
    But this would imply that \(\cof(\mathcal A; \mathcal B, \leq) < \kappa\), a contradiction.
\end{proof}

\begin{lemma} \label{lemma:CofinalityBetweenGroundAndFunctions}
    Suppose \(X\) is a Tychonoff space.
    Assume \(\mathcal A, \mathcal B \subseteq \wp(X)\).
    Then
    \[
    (\mathscr N_{C_{\mathcal A}(X)}(\mathbf 0); \mathscr N_{C_{\mathcal{B}}(X)}(\mathbf 0), \supseteq) \leq_T (\mathcal A \times \omega; \mathcal B \times \omega, \subseteq)
    \]
    and
    \[
    (\mathscr N_{C_{\bar{\mathcal A}}(X)}(\mathbf 0); \mathscr N_{C_{\mathcal{B}}(X)}(\mathbf 0), \supseteq) =_T (\bar{\mathcal A} \times \omega; \mathcal B \times \omega, \subseteq).
    \]
\end{lemma}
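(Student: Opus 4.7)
The proof hinges on the correspondence $(A, n) \leftrightarrow [\mathbf{0}; A, 1/(n+1)]$ between pairs and basic neighborhoods of $\mathbf{0}$. For the first inequality I would use the map $\varphi : \mathcal A \times \omega \to \mathscr N_{C_{\mathcal A}(X)}(\mathbf{0})$ defined by $\varphi(A, n) = [\mathbf{0}; A, 1/(n+1)]$. For the nontrivial $\geq_T$ half of the equality I would fix, via choice, a map $\psi : \mathscr N_{C_{\bar{\mathcal A}}(X)}(\mathbf{0}) \to \bar{\mathcal A} \times \omega$ that selects, for each neighborhood $V$, some pair $(A, n)$ with $[\mathbf{0}; A, 1/(n+1)] \subseteq V$; such a pair exists since basic neighborhoods of this form are cofinal in $\mathscr N_{C_{\bar{\mathcal A}}(X)}(\mathbf{0})$.

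Verifying that $\varphi$ is a Tukey quotient is purely topological and uses no Tychonoff hypothesis: if $\mathcal F \subseteq \mathcal A \times \omega$ is cofinal for $\mathcal B \times \omega$ and $V \in \mathscr N_{C_{\mathcal B}(X)}(\mathbf{0})$, then I take a basic neighborhood $[\mathbf{0}; B, \varepsilon] \subseteq V$, choose $m \in \omega$ with $1/(m+1) \leq \varepsilon$, and use cofinality to pick $(A, n) \in \mathcal F$ dominating $(B, m)$ to obtain
\[
\varphi(A,n) = [\mathbf{0}; A, 1/(n+1)] \subseteq [\mathbf{0}; B, 1/(m+1)] \subseteq V.
\]
Applying the same argument with $\bar{\mathcal A}$ in place of $\mathcal A$ supplies the $\leq_T$ half of the equality.

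For the reverse direction of the equality, which is the crux, I would verify that $\psi$ is a Tukey quotient. Given $\mathcal G \subseteq \mathscr N_{C_{\bar{\mathcal A}}(X)}(\mathbf{0})$ cofinal for $\mathscr N_{C_{\mathcal B}(X)}(\mathbf{0})$ and a nonempty $(B, m) \in \mathcal B \times \omega$, I pick $V \in \mathcal G$ with $V \subseteq [\mathbf{0}; B, 1/(m+1)]$ and write $\psi(V) = (A, n)$, so that
\[
[\mathbf{0}; A, 1/(n+1)] \subseteq V \subseteq [\mathbf{0}; B, 1/(m+1)].
\]
From this inclusion I need both $B \subseteq A$ and $m \leq n$. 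The main obstacle is $B \subseteq A$, which is where Tychonoff-ness is essential: if some $x_0 \in B \setminus A$ existed, then, using that $A$ is closed (because $A \in \bar{\mathcal A}$) and that $X$ is Tychonoff, a Urysohn-type function $f : X \to [0,1]$ with $f|_A \equiv 0$ and $f(x_0) = 1$ would belong to $[\mathbf{0}; A, 1/(n+1)]$ but fail to lie in $[\mathbf{0}; B, 1/(m+1)]$, a contradiction. The claim $m \leq n$ is easier: if $n < m$, the constant function $c \equiv 1/(m+1)$ lies in $[\mathbf{0}; A, 1/(n+1)]$ but not in $[\mathbf{0}; B, 1/(m+1)]$. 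The closure hypothesis on $\bar{\mathcal A}$ is precisely what makes the Urysohn step go through, which also explains why the first statement of the lemma is only a one-sided inequality.
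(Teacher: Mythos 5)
Your proposal is correct and follows essentially the same route as the paper: the map $(A,n)\mapsto[\mathbf 0;A,1/(n+1)]$ (the paper uses $2^{-n}$) for the $\leq_T$ direction, and for the harder direction a choice of basic neighborhood $[\mathbf 0;A,1/(n+1)]\subseteq V$, with the Urysohn/Tychonoff argument forcing $B\subseteq A$ and the constant-function argument forcing the index inequality, exactly as in the paper's proof. Your remark that closedness of the sets in $\bar{\mathcal A}$ is what makes the Urysohn step work, and hence why the first statement is only one-sided, matches the paper's ``without loss of generality $\mathcal A=\bar{\mathcal A}$'' reduction.
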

\begin{proof}
To address \((\mathscr N_{C_{\mathcal A}(X)}(\mathbf 0); \mathscr N_{C_{\mathcal{B}}(X)}(\mathbf 0), \supseteq) \leq_T (\mathcal A \times \omega; \mathcal B \times \omega, \subseteq)\), define \(\psi : \mathcal A \times \omega \to \mathscr N_{C_{\mathcal A}}(\mathbf 0)\) by
\[
    \psi(A,n) = [\mathbf 0; A, 2^{-n}].
\]
Suppose \(\mathcal F \subseteq \mathcal A \times \omega\) is cofinal for \(\mathcal B \times \omega\) and let \(U \in \mathscr N_{C_{\mathcal B}(X)}(\mathbf 0)\) be arbitrary.
We can find \(B \in \mathcal B\) and \(n \in \omega\) so that
\[
    [\mathbf 0; B, 2^{-n}] \subseteq U.
\]
By the cofinality of \(\mathcal F\) relative to \(\mathcal B \times \omega\), we can find \(A \in \mathcal A\) and \(m \in \omega\) so that \(B \subseteq A\) and \(n \leq m\).
It follows that
\[
  \psi(A, m) = [\mathbf 0; A , 2^{-m}] \subseteq  [\mathbf 0; B, 2^{-n}] \subseteq U.
\]
That is, \(\psi[\mathcal F]\) is cofinal in \(\mathscr N_{C_{\mathcal B}(X)}(\mathbf 0)\).

Without loss of generality, suppose \(\mathcal A = \bar{\mathcal A}\).
To address
\[
(\mathscr N_{C_{\mathcal A}(X)}(\mathbf 0); \mathscr N_{C_{\mathcal{B}}(X)}(\mathbf 0), \supseteq) \geq_T (\mathcal A \times \omega; \mathcal B \times \omega, \subseteq),
\]
let \(\phi : \mathscr N_{C_{\mathcal{A}}(X)}(\mathbf 0) \to \mathcal A \times \omega\) be defined in the following way.
For any \(U \in \mathscr N_{C_{\mathcal A}(X)}(\mathbf 0)\), let \(A_U \in \mathcal A\) and \(\varepsilon_U > 0\) be so that
\[
    [\mathbf 0; A_U , \varepsilon_U] \subseteq U.
\]
Choose \(n_U \in \omega\) so that \(2^{-n_U} < \varepsilon_U\).
Then define \(\phi(U) = \langle A_U, n_U \rangle\).

Suppose \(\mathcal F \subseteq \mathscr N_{C_{\mathcal A}(X)}(\mathbf 0)\) is cofinal for \(\mathscr N_{C_{\mathcal B}(X)}(\mathbf 0)\).
To see that \(\phi[\mathcal F]\) is cofinal for \(\mathcal B \times \omega\), let \(B \in \mathcal B\) and \(n \in \omega\).
Then \([\mathbf 0; B, 2^{-n}] \in \mathscr N_{C_{\mathcal B}(X)}(\mathbf 0)\) which means there exists some \(U \in \mathcal F\) so that \(U \subseteq [\mathbf 0; B, 2^{-n}]\).
Moreover,
\[
    [\mathbf 0; A_U , 2^{-n_U}] \subseteq U \subseteq [\mathbf 0; B, 2^{-n}].
\]

Suppose toward contradiction that \(B \not\subseteq A_U\).
Then, for \(x \in B \setminus A_U\), we can find a continuous function \(f : X \to [0,1]\) so that \(f(x) = 1\) and \(f \restriction_{A_U} \equiv 0\).
But then \(f \in [\mathbf 0; A_U , 2^{-n_U}] \setminus [\mathbf 0; B, 2^{-n}]\), a contradiction.

Were \(n > n_U\), consider the constant function defined by \(f(x) = 2^{-n}\).
This is a contradiction to \([\mathbf 0; A_U , 2^{-n_U}] \subseteq [\mathbf 0; B, 2^{-n}]\) so \(n \leq n_U\).

Since \(B \subseteq A_U\) and \(n \leq n_U\), we see that \(\phi[\mathcal F]\) is cofinal for \(\mathcal B \times \omega\).
\end{proof}

\section{Strengthening Strategies}

\begin{lemma}
    Suppose \(\mathcal A\) is an ideal-base, \(X = \bigcup \mathcal A\), and let \(\mathscr U \in \mathcal O(X,\mathcal A)\).
    Then, for each \(A\in \mathcal A\), \(\{U \in \mathscr U : A \in U \}\) is infinite.
    That is, \(\mathcal O(X,\mathcal A) = \Lambda(X,\mathcal A)\).
\end{lemma}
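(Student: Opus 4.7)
The statement to prove is that any cover $\mathscr U \in \mathcal O(X,\mathcal A)$ actually lies in $\Lambda(X,\mathcal A)$, i.e.\ the family $\{U \in \mathscr U : A \subseteq U\}$ is infinite for every $A\in\mathcal A$ (I read the ``$A \in U$'' in the statement as a typo for ``$A \subseteq U$''). My plan is a direct proof by contradiction: assume this family is finite and use the ideal-base property together with $X = \bigcup\mathcal A$ to manufacture one more element of $\mathscr U$ covering $A$, contradicting the assumed finiteness.

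In detail, fix $A \in \mathcal A$ and suppose toward a contradiction that $\{U \in \mathscr U : A \subseteq U\} = \{V_1, \ldots, V_n\}$ is finite (it is non-empty because $\mathscr U \in \mathcal O(X,\mathcal A)$). Since $X \notin \mathscr U$ by the definition of $\mathcal O(X,\mathcal A)$, each $V_i$ is a proper subset of $X$, so we may pick $x_i \in X \setminus V_i$ for each $i \leq n$. Now I use the hypothesis $X = \bigcup \mathcal A$ to pick $A_i \in \mathcal A$ with $x_i \in A_i$. Iteratively applying the ideal-base property to $A, A_1, \ldots, A_n$, I obtain a single $A^\ast \in \mathcal A$ with $A \cup A_1 \cup \cdots \cup A_n \subseteq A^\ast$.

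Because $\mathscr U \in \mathcal O(X,\mathcal A)$, there is some $U \in \mathscr U$ with $A^\ast \subseteq U$. Then $A \subseteq A^\ast \subseteq U$, so $U$ must equal some $V_j$ from the supposedly exhaustive list. On the other hand $x_i \in A_i \subseteq A^\ast \subseteq U$ for every $i$, whereas $x_j \notin V_j$, giving $U \neq V_j$ for every $j$. This is the required contradiction, establishing that the set $\{U \in \mathscr U : A \subseteq U\}$ cannot be finite.

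I don't foresee a genuine obstacle: both the ideal-base closure and the existence of $x_i \notin V_i$ are immediate from the hypotheses, and the final ``one more $U$'' argument is the standard trick for turning a covering condition into a $\Lambda$-covering condition. The only care needed is to invoke $X \notin \mathscr U$ explicitly so that the points $x_i$ exist, and to note that $\{U \in \mathscr U : A \subseteq U\} \neq \emptyset$ at the outset so the list $V_1, \ldots, V_n$ is not vacuous (which would make the contradiction trivially fail to apply, though in that degenerate case $\mathscr U$ would already fail to be in $\mathcal O(X,\mathcal A)$).
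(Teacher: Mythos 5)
Your proof is correct and uses essentially the same idea as the paper: use $X \notin \mathscr U$ to pick points outside the cover elements containing $A$, absorb them together with $A$ into a single member of $\mathcal A$ via the ideal-base property, and thereby produce a member of $\mathscr U$ containing $A$ that is new. The only cosmetic difference is that you frame it as a contradiction from finiteness, while the paper runs the same trick as a direct induction producing infinitely many distinct elements; your reading of ``$A \in U$'' as ``$A \subseteq U$'' is also the intended one.
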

\begin{proof}
    Let \(A \in \mathcal A\) be arbitrary and let \(U_0 \in \mathscr U\) be so that \(A \subseteq U_0\).
    Since \(X \setminus U_0 \neq \emptyset\), let \(x_1 \in X \setminus U_0\) and let \(A_1^\ast \in \mathcal A\) be so that \(x_1 \in A_1^\ast\).
    Let \(A_1 \in \mathcal A\) be so that \(A \cup A_1^\ast \subseteq A_1\) and let \(U_1 \in \mathscr U\) be so that \(A_1 \subseteq U_1\).
    Since \(A_1 \cap (X \setminus U_0) \neq\emptyset\), we know that \(U_0 \neq U_1\).
    Inductively continue in this way.
\end{proof}

\begin{corollary}\label{lemma:Open=Large}
    Suppose \(\mathcal A\) and \(\mathcal B\) are ideal-bases.
    Then \(G_1(\mathscr N[\mathcal A], \neg\mathcal O(X,\mathcal B))\) is equivalent to \(G_1(\mathscr N[\mathcal A], \neg\Lambda(X,\mathcal B))\).
\end{corollary}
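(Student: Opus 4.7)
The plan is to observe that this corollary is essentially a direct translation of the preceding lemma. That lemma already establishes the set-theoretic equality $\mathcal O(X, \mathcal B) = \Lambda(X, \mathcal B)$ whenever $\mathcal B$ is an ideal-base with $X = \bigcup \mathcal B$. Once this equality is in hand, the corollary is almost by inspection.

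Indeed, the two games $G_1(\mathscr N[\mathcal A], \neg\mathcal O(X, \mathcal B))$ and $G_1(\mathscr N[\mathcal A], \neg\Lambda(X, \mathcal B))$ share identical move-spaces: in each, player One plays some $A \in \mathcal A$ (identified with $\mathscr N(A)$ via the notational convention set up in the preliminaries) and player Two responds with an open set $U$ containing $A$. The only potential point of disagreement is the winning condition, namely whether the sequence of Two's responses avoids the designated target family. Applying the preceding lemma to $\mathcal B$ yields $\neg \mathcal O(X, \mathcal B) = \neg \Lambda(X, \mathcal B)$, so the two winning conditions coincide, and the two games are literally identical as games.

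Consequently the identity map on strategies gives, for each of player One and player Two, a bijection between full-information (resp.\ predetermined, Markov, tactical) strategies in one game and strategies of the same type in the other, and this bijection trivially preserves the property of being winning. This yields equivalence in every sense relevant to the paper, and in particular in both the strategic and Markov senses of duality. The only subtlety worth flagging is that the preceding lemma requires $X = \bigcup \mathcal B$ to run its inductive construction; this hypothesis should be read as implicit in the corollary, since the easy inclusion $\Lambda(X, \mathcal B) \subseteq \mathcal O(X, \mathcal B)$ holds unconditionally and only the reverse inclusion needs the lemma's setup. There is no real obstacle here beyond being explicit about that hypothesis.
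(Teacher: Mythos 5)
Your proposal is correct and matches the paper's (implicit) argument: the corollary is stated without proof precisely because the preceding lemma gives \(\mathcal O(X,\mathcal B) = \Lambda(X,\mathcal B)\), so the two games have identical moves and payoff classes and are literally the same game. Your remark that the hypothesis \(X = \bigcup \mathcal B\) is needed (and left implicit) for the nontrivial inclusion is a fair and accurate observation.
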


\begin{definition}
    For collections \(\mathcal A\) and \(\mathcal B\), recall that \(\mathcal A\) \textbf{refines} \(\mathcal B\), denoted \(\mathcal A \prec \mathcal B\), provided that, for every \(B \in \mathcal B\), there exists \(A \in \mathcal A\) so that \(A \subseteq B\).
\end{definition}

\begin{lemma}
\(\mathcal A \prec \mathcal B\) if and only if \(\mathcal O(X,\mathcal B) \subseteq \mathcal O(X,\mathcal A)\).
\end{lemma}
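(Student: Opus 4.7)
The plan is to prove each implication of the biconditional separately.

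For the forward direction, assume $\mathcal{A} \prec \mathcal{B}$. The argument is a short composition: given $\mathscr{U} \in \mathcal{O}(X,\mathcal{B})$ and $A \in \mathcal{A}$, first use the refinement hypothesis to produce $B \in \mathcal{B}$ with $A \subseteq B$, and then use $\mathscr{U} \in \mathcal{O}(X,\mathcal{B})$ to produce $U \in \mathscr{U}$ with $B \subseteq U$. Transitivity gives $A \subseteq U$, so $\mathscr{U} \in \mathcal{O}(X,\mathcal{A})$. This direction invokes no topological structure on $X$.

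For the reverse direction, I would argue by contrapositive. Assume $\mathcal{A} \not\prec \mathcal{B}$ and fix $A_0 \in \mathcal{A}$ with $A_0 \not\subseteq B$ for every $B \in \mathcal{B}$; the goal is to build an open cover $\mathscr{U} \in \mathcal{O}(X,\mathcal{B}) \setminus \mathcal{O}(X,\mathcal{A})$. In the $T_1$ setting implicit throughout the paper, the natural candidate is
\[
    \mathscr{U} = \{X \setminus \{x\} : x \in A_0\}.
\]
Its elements are open (singletons being closed under $T_1$) and it is an open cover of $X$ with $X \notin \mathscr{U}$ whenever $|A_0| \geq 2$ and $A_0 \subsetneq X$. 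For each $B \in \mathcal{B}$, the set $A_0 \setminus B$ is nonempty by the choice of $A_0$, so any $x \in A_0 \setminus B$ yields $B \subseteq X \setminus \{x\} \in \mathscr{U}$; hence $\mathscr{U} \in \mathcal{O}(X,\mathcal{B})$. On the other hand, $A_0 \not\subseteq X \setminus \{x\}$ for any $x \in A_0$, so $\mathscr{U} \notin \mathcal{O}(X,\mathcal{A})$, as desired.

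The main obstacle is the degenerate case where the witness $A_0$ is a singleton $\{x_0\}$: then any open cover automatically has an element containing $x_0$, hence containing $A_0$, and the construction above breaks down. To handle this one either invokes an implicit nondegeneracy assumption (the $\mathcal{A}$'s of interest in this paper—finite sets, compacta, and the like—contain sets of unbounded size), or exploits the ideal-base structure of $\mathcal{A}$ to enlarge $A_0$ to an element $A_0' \in \mathcal{A}$ with $|A_0'| \geq 2$ that still fails to be contained in any $B \in \mathcal{B}$, reducing to the previous case.
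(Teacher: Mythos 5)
Your proof is correct and takes essentially the same route as the paper: the forward direction is the same two-step composition, and for the converse the paper also argues from a non-refined witness \(A\) using co-singleton open sets, choosing \(x_B \in A \setminus B\) and setting \(U_B = X \setminus \{x_B\}\) indexed over \(B \in \mathcal B\) rather than over the points of \(A\). The singleton degeneracy you flag is genuine but is silently passed over in the paper's proof as well (its family \(\{U_B : B \in \mathcal B\}\) likewise need not cover \(X\) when the witness is a singleton), so your explicit treatment of that case is, if anything, the more careful one.
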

\begin{proof}
    Suppose \(\mathcal A \prec \mathcal B\).
    Let \(\mathscr U \in \mathcal O(X,\mathcal B)\) and \(A \in \mathcal A\).
    Let \(B \in \mathcal B\) be so that \(A \subseteq B\) and let \(U \in \mathscr U\) be so that \(B \subseteq U\).
    You get the idea.

    Now, suppose \(\mathcal A \not\prec \mathcal B\).
    Let \(A \in \mathcal A\) be so that, for all \(B\in\mathcal B\), \(A \not\subseteq B\).
    Then choose \(x_B \in A \setminus B\) and set \(U_B = X \setminus \{x_B\}\) for each \(B\in \mathcal B\).
    Notice that \(B \subseteq U_B\) so \(\{ U_B : B \in \mathcal B \} \in \mathcal O(X,\mathcal B)\).
    Clearly, \(\{ U_B : B \in \mathcal B \} \not\in \mathcal O(X,\mathcal A)\).
\end{proof}

In \cite{Pawlikowski1994}, Pawlikowski showed that \(S_{\text{fin}}(\mathcal O_X,\mathcal O_X)\) if and only if \(\One \not\uparrow G_{\text{fin}}(\mathcal O_X,\Lambda_X)\) and also that \(S_{1}(\mathcal O_X,\mathcal O_X)\) if and only if \(\One \not\uparrow G_{1}(\mathcal O_X,\Lambda_X)\).
The authors generalized this in a previous paper.
The following lemmas are slightly more general than proved there, but the proofs are the same as in \cite{CaruvanaHolshouser}.

{\color{red}
Lemmas \ref{lemma:PawlikowskiA} and \ref{lemma:Pawlikowski} are only known to be true if both cover types are \(\omega\)-covers or if both cover types are \(k\)-covers. See \href{https://arxiv.org/abs/2102.00296}{arXiv:2102.00296} for a proof of the \(k\)-covers case.

\begin{lemma} \label{lemma:PawlikowskiA}
    Assume \(\mathcal A \prec \mathcal B\) and \(S_{\text{fin}}(\mathcal O(X,\mathcal A),\mathcal O(X,\mathcal B))\).
    Then \(\One \not\uparrow G_{\text{fin}}(\mathcal O(X,\mathcal A),\Lambda(X,\mathcal B))\).
    Moreover, \(\One \uparrow G_{\text{fin}}(\mathcal O(X, \mathcal A), \mathcal O(X, \mathcal B))\) if and only if \(\One \underset{\text{pre}}{\uparrow} G_{\text{fin}}(\mathcal O(X, \mathcal A), \mathcal O(X, \mathcal B))\).
\end{lemma}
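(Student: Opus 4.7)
The plan is a Pawlikowski-style diagonalization. Assume toward contradiction that $\sigma$ is a winning strategy for One in $G_{\text{fin}}(\mathcal O(X,\mathcal A),\Lambda(X,\mathcal B))$. The aim is to use $\sigma$ to manufacture a sequence of covers in $\mathcal O(X,\mathcal A)$ that witnesses the failure of $S_{\text{fin}}(\mathcal O(X,\mathcal A),\mathcal O(X,\mathcal B))$, contradicting the hypothesis.

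First I would build the usual tree of $\sigma$-legal plays. For each $s \in \omega^{<\omega}$, recursively define $\mathscr U_s \in \mathcal O(X,\mathcal A)$ together with an enumeration $\{F_n^s : n \in \omega\}$ of its finite subsets, setting $\mathscr U_\emptyset = \sigma\langle\rangle$ and, for $s = \langle k_0, \ldots, k_{m-1}\rangle$, $\mathscr U_s = \sigma\langle F_{k_0}^\emptyset, F_{k_1}^{\langle k_0\rangle}, \ldots, F_{k_{m-1}}^{s\restriction(m-1)}\rangle$. Each branch of this tree corresponds to a legal play against $\sigma$, so by hypothesis the union of Two's moves along every branch must fail to lie in $\Lambda(X,\mathcal B)$. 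Next, fix a bijection $\omega \to \omega^{<\omega}$ and apply $S_{\text{fin}}(\mathcal O(X,\mathcal A), \mathcal O(X,\mathcal B))$ to the resulting countable family $\{\mathscr U_s\}$ to obtain, for each $s$, a finite $\mathcal H_s \subseteq \mathscr U_s$ with $\bigcup_s \mathcal H_s \in \mathcal O(X,\mathcal B)$, recorded through the enumeration as $\mathcal H_s = F_{n_s}^s$.

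The main obstacle is the final step: extracting from this global selection a single branch $\pi \in \omega^\omega$ along which the accumulated moves $\mathcal H_{\pi\restriction k}$ form a cover in $\Lambda(X,\mathcal B)$, thereby contradicting the assumed winning character of $\sigma$. A single application of $S_{\text{fin}}$ only ensures that each $B \in \mathcal B$ is covered \emph{somewhere} in the tree, not that there is one branch covering every $B$ infinitely often. The standard remedy is to iterate the selection construction countably many times and diagonalize over the tree, exploiting $\mathcal A \prec \mathcal B$ to rearrange schedules so that every $B \in \mathcal B$ is captured arbitrarily late along some fixed branch. This is exactly the step flagged in red above: it is verified for $\omega$-covers in \cite{Pawlikowski1994} and for $k$-covers in \href{https://arxiv.org/abs/2102.00296}{arXiv:2102.00296}, but a uniform argument covering arbitrary ideal-bases is not yet in place, which is why the authors restrict the statement's applicability in the note.

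For the \emph{moreover} clause, one direction is immediate since predetermined strategies are a special case. For the other direction, suppose $\One \uparrow G_{\text{fin}}(\mathcal O(X,\mathcal A), \mathcal O(X,\mathcal B))$. Because $\Lambda(X,\mathcal B) \subseteq \mathcal O(X,\mathcal B)$, any winning strategy for One with target $\mathcal O(X,\mathcal B)$ is automatically winning with target $\Lambda(X,\mathcal B)$, so the contrapositive of the first part yields $\neg S_{\text{fin}}(\mathcal O(X,\mathcal A), \mathcal O(X,\mathcal B))$. A witnessing sequence $(\mathscr V_n) \subseteq \mathcal O(X,\mathcal A)$, in which no choice of finite selections produces a member of $\mathcal O(X,\mathcal B)$, then furnishes the predetermined winning strategy $n \mapsto \mathscr V_n$ for $G_{\text{fin}}(\mathcal O(X,\mathcal A), \mathcal O(X,\mathcal B))$.
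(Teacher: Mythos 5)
Your proposal is not a complete proof, and you have put your finger on exactly the step where it (and, in this generality, every known argument) breaks down. The tree of \(\sigma\)-legal positions and the single application of \(S_{\text{fin}}(\mathcal O(X,\mathcal A),\mathcal O(X,\mathcal B))\) to the countable family \(\{\mathscr U_s : s \in \omega^{<\omega}\}\) is the routine part; the real content of Pawlikowski's theorem is the scheduling/diagonalization that produces a \emph{single} branch \(\pi\) along which Two's accumulated selections land in \(\Lambda(X,\mathcal B)\). That argument is not an abstract iteration: it exploits specific combinatorics of \(\omega\)-covers (and, in the arXiv:2102.00296 revision, of \(k\)-covers), essentially that when a finite union of selection blocks fails to be an \(\omega\)-cover (resp. \(k\)-cover), a single finite (resp. compact) set witnesses the failure, and such witnesses can be absorbed into later moves because the relevant family of sets is closed under finite unions. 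No analogue of this is known for arbitrary pairs \(\mathcal A \prec \mathcal B\), so the sentence in your write-up beginning ``The standard remedy is to iterate\dots'' is a description of a hoped-for argument, not a proof.

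For what it is worth, this is also the paper's own position: the authors give no proof of Lemma \ref{lemma:PawlikowskiA} here, flag it in red as not known to be true as stated, and refer to Pawlikowski \cite{Pawlikowski1994} for the \(\omega\)-cover case and to arXiv:2102.00296 for the \(k\)-cover case. So your diagnosis of the gap is accurate, but as a proof of the lemma as stated the proposal is incomplete at precisely that step. Your treatment of the ``moreover'' clause is fine as a reduction: \(\One \uparrow G_{\text{fin}}(\mathcal O(X,\mathcal A),\mathcal O(X,\mathcal B))\) gives \(\One \uparrow G_{\text{fin}}(\mathcal O(X,\mathcal A),\Lambda(X,\mathcal B))\) since \(\Lambda(X,\mathcal B) \subseteq \mathcal O(X,\mathcal B)\), the contrapositive of the first part then yields \(\neg S_{\text{fin}}(\mathcal O(X,\mathcal A),\mathcal O(X,\mathcal B))\), and a witnessing sequence of covers is exactly a predetermined winning strategy; but this inherits the dependence on the unproven first part, so it too is only established in the \(\omega\)-cover and \(k\)-cover cases.
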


\begin{lemma}\label{lemma:Pawlikowski}
    Assume \(\mathcal A \prec \mathcal B\) and \(S_{1}(\mathcal O(X,\mathcal A),\mathcal O(X,\mathcal B))\).
    Then \(\One \not\uparrow G_{1}(\mathcal O(X,\mathcal A),\Lambda(X,\mathcal B))\).
    Moreover, \(\One \uparrow G_{1}(\mathcal O(X, \mathcal A), \mathcal O(X, \mathcal B))\) if and only if \(\One \underset{\text{pre}}{\uparrow} G_{1}(\mathcal O(X, \mathcal A), \mathcal O(X, \mathcal B))\).
\end{lemma}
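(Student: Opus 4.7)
I would split the statement into the non-winning assertion $\mathrm{I} \not\uparrow G_1(\mathcal O(X,\mathcal A), \Lambda(X,\mathcal B))$ and the ``moreover'' clause. The moreover clause reduces to the non-winning assertion: one direction (predetermined winning $\Rightarrow$ winning) is trivial, and for the converse, suppose $\mathrm{I} \underset{\text{pre}}{\not\uparrow} G_1(\mathcal O(X,\mathcal A), \mathcal O(X,\mathcal B))$, which by the Remark is equivalent to $S_1(\mathcal O(X,\mathcal A), \mathcal O(X,\mathcal B))$; the main assertion then gives $\mathrm{I} \not\uparrow G_1(\mathcal O(X,\mathcal A), \Lambda(X,\mathcal B))$, and since $\Lambda(X,\mathcal B) \subseteq \mathcal O(X,\mathcal B)$, any strategy of One that fails to force Two out of $\Lambda(X,\mathcal B)$ also fails to force Two out of $\mathcal O(X,\mathcal B)$, giving $\mathrm{I} \not\uparrow G_1(\mathcal O(X,\mathcal A), \mathcal O(X,\mathcal B))$.

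For the main assertion, let $\sigma$ be a strategy for One in $G_1(\mathcal O(X,\mathcal A), \Lambda(X,\mathcal B))$. Following Pawlikowski, I would parametrize the tree of partial plays against $\sigma$ by $T = \omega^{<\omega}$. Starting with $\mathscr U_{\langle\rangle} = \sigma\langle\rangle$, fix an enumeration $\mathscr U_s = \{U_s(k) : k \in \omega\}$ at each node and define $\mathscr U_{s \frown k} = \sigma\langle U_{s\restriction 0}(s(0)), \ldots, U_s(k)\rangle$; this produces a countable family $\{\mathscr U_s : s \in T\} \subseteq \mathcal O(X,\mathcal A)$.

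I would then apply $S_1(\mathcal O(X,\mathcal A), \mathcal O(X,\mathcal B))$ to this countable family to obtain selections $W_s \in \mathscr U_s$ such that $\{W_s : s \in T\} \in \mathcal O(X,\mathcal B)$, upgrading to a $\Lambda(X,\mathcal B)$-cover via Corollary \ref{lemma:Open=Large}. Writing $W_s = U_s(f(s))$ for some $f : T \to \omega$, the natural candidate play is the branch $t \in \omega^\omega$ defined by the self-referential recursion $t(n) = f(t \restriction n)$; then Two's responses along this branch are $\{W_{t\restriction n} : n \in \omega\}$, which constitute a legitimate play of the game against $\sigma$.

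The hard part is the final step: ensuring the selections $\{W_{t \restriction n} : n \in \omega\}$ along this single branch still form a $\Lambda(X,\mathcal B)$-cover, not merely that the full tree-indexed family does. This requires Pawlikowski's combinatorial refinement: rather than a single application of $S_1$ to the whole tree, one interleaves applications across levels so that each $B \in \mathcal B$ is witnessed at infinitely many levels forced to lie along the chosen branch, extracting the branch by a K\"onig-style recursion that uses $\mathcal A \prec \mathcal B$ (equivalently $\mathcal O(X,\mathcal B) \subseteq \mathcal O(X,\mathcal A)$, by the preceding lemma) to ensure consecutive refinements stay within $\mathcal A$-covers. This is the delicate step, and it is precisely where the red comment flags the need for additional care outside the $\omega$- and $k$-cover settings.
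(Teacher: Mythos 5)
Your reduction of the ``moreover'' clause to the main assertion is fine (a predetermined strategy is a strategy, and since \(\Lambda(X,\mathcal B) \subseteq \mathcal O(X,\mathcal B)\), \(\One \uparrow G_1(\mathcal O(X,\mathcal A),\mathcal O(X,\mathcal B))\) implies \(\One \uparrow G_1(\mathcal O(X,\mathcal A),\Lambda(X,\mathcal B))\), so the contrapositive argument through the Remark works). But the main assertion is exactly what you have not proved, and here there is a genuine gap rather than a deferrable technicality. Be aware that the paper itself does not prove Lemma \ref{lemma:Pawlikowski}: the red note preceding Lemmas \ref{lemma:PawlikowskiA} and \ref{lemma:Pawlikowski}, and the introduction, state that these lemmas \emph{as stated are not known to be true}, being established only when both cover types are \(\omega\)-covers or both are \(k\)-covers (with the \(k\)-cover case deferred to arXiv:2102.00296). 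So a sketch that ends by invoking ``Pawlikowski's combinatorial refinement'' at the decisive step is not a proof of the stated generality; it is precisely the step that is open.

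Concretely, two points in your outline are unjustified for arbitrary \(\mathcal A \prec \mathcal B\). First, you enumerate each \(\mathscr U_s\) as \(\{U_s(k) : k \in \omega\}\), which presumes the covers played by \(\sigma\) are countable; elements of \(\mathcal O(X,\mathcal A)\) need not be, and in the known cases countability is recovered from the selection hypothesis using the special structure of \(\omega\)- or \(k\)-covers (e.g.\ \(S_1(\Omega_X,\Omega_X)\) forces every \(\omega\)-cover to have a countable \(\omega\)-subcover), a reduction with no analogue for general \(\mathcal A\), \(\mathcal B\). Second, and more seriously, applying \(S_1(\mathcal O(X,\mathcal A),\mathcal O(X,\mathcal B))\) to the tree-indexed family only controls the \emph{entire} family \(\{W_s : s \in T\}\); forcing the selections along the single branch \(t\) actually played against \(\sigma\) to lie in \(\Lambda(X,\mathcal B)\) is where Pawlikowski's argument (and its \(k\)-cover adaptation in \cite{CaruvanaHolshouser}) exploits that finite sets, respectively compact sets, are closed under finite unions and that \(\omega\)-/\(k\)-covers can be refined accordingly; your appeal to a ``K\"onig-style recursion using \(\mathcal A \prec \mathcal B\)'' does not supply such an argument, and none is currently known. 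Acknowledging the difficulty, as you do in your last paragraph, correctly locates the obstruction but leaves the lemma unproved; at best your argument establishes the cases already covered by the cited references.
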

}

In \cite{TkachukFE}, Tkachuk showed that \(\One \uparrow G_1([X]^{<\omega}, \neg\mathcal O_X)\) if and only if \(\One \uparrow G_1([X]^{<\omega},\neg\Gamma_X)\).
The authors generalized this result to \(\mathcal O(X,\mathcal A)\) in \cite{CaruvanaHolshouser}, assuming that \(\mathcal A\) is an ideal.
Here we show that one only needs to assume that \(\mathcal A\) is an ideal base.

\begin{lemma} \label{lem:PreviousLemma}
    For any strategy \(\sigma\) for One in \(G_1(\mathcal A, \mathcal B)\) where \(\mathcal A\) and \(\mathcal B\) are collections, define
    \[
        \text{play}_\sigma = \left\{ \langle x_0 , x_1 , \ldots , x_n \rangle : (n \in \omega) \wedge (\forall \ell < n)\left[ x_\ell \in \sigma(\langle x_j : j < \ell \rangle) \right] \right\} \subseteq \left(\bigcup \mathcal A\right)^{<\omega}
    \]
    and
    \[
        \text{play}_\sigma^\omega = \left\{  \langle x_n : n \in \omega \rangle : (\forall n \in \omega)\left[ \langle x_\ell : \ell \leq n \rangle \in \text{play}_\sigma \right] \right\} \subseteq \left( \bigcup \mathcal A \right)^\omega
    \]
    If \(\sigma\) is a winning strategy, then for any \(\langle x_n : n \in \omega \rangle \in \text{play}_\sigma^\omega\), \(\{ x_n : n \in \omega \} \not\in \mathcal B\).
\end{lemma}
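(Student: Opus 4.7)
The plan is to observe that $\mathrm{play}_\sigma^\omega$ is, by construction, nothing more than a renaming of the set of legal play-sequences by Two against the strategy $\sigma$, and then invoke the definition of a winning strategy verbatim. In other words, the work is almost entirely unpacking definitions; there is no genuine obstacle, only bookkeeping to verify that the indexing in $\mathrm{play}_\sigma^\omega$ matches the indexing in the game $G_1(\mathcal A, \mathcal B)$.

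First I would fix $\langle x_n : n \in \omega \rangle \in \mathrm{play}_\sigma^\omega$ and verify the key compatibility: for every $n \in \omega$ we have $\langle x_\ell : \ell \leq n \rangle \in \mathrm{play}_\sigma$, which (taking the definition of $\mathrm{play}_\sigma$ with length $n+1$) is exactly the assertion that $x_\ell \in \sigma(\langle x_j : j < \ell \rangle)$ for every $\ell \leq n$. Letting $n$ range over all of $\omega$ yields $x_\ell \in \sigma(\langle x_j : j < \ell \rangle)$ for all $\ell \in \omega$.

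Next, interpret this game-theoretically: set $A_\ell = \sigma(\langle x_j : j < \ell \rangle)$ for each $\ell$. Then $\langle A_0, x_0, A_1, x_1, \ldots \rangle$ is a legal play of $G_1(\mathcal A, \mathcal B)$ in which One follows $\sigma$ and Two responds at stage $\ell$ with $x_\ell \in A_\ell$. Since $\sigma$ is assumed to be a winning strategy for One, the definition of winning gives $\{x_\ell : \ell \in \omega\} \notin \mathcal B$, which is the desired conclusion.

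The only thing that might require a sentence of care is the off-by-one in the definition of $\mathrm{play}_\sigma$: the condition on $\langle x_0, \ldots, x_n \rangle$ is stated only for $\ell < n$, so membership of a finite initial segment of length $n+1$ in $\mathrm{play}_\sigma$ is what forces the constraint at index $n$. Running through this once carefully is really the whole content of the proof.
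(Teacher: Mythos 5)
Your proof is correct and takes essentially the same route as the paper: unwind the definition of $\mathrm{play}_\sigma^\omega$ to get $x_\ell \in \sigma(\langle x_j : j < \ell \rangle)$ for every $\ell$, observe that the $x_\ell$ are therefore Two's moves in a single run of $G_1(\mathcal A, \mathcal B)$ played according to $\sigma$, and invoke the definition of a winning strategy for One. (One harmless indexing slip: with the paper's literal definition, the constraint at index $n$ is forced by membership of the initial segment ending at index $n+1$, i.e.\ of length $n+2$, rather than length $n+1$; this does not affect your argument, since every initial segment of the sequence lies in $\mathrm{play}_\sigma$.)
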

\begin{proof}
    Let \(\langle x_n : n \in \omega \rangle \in \text{play}_\sigma^\omega\).
    Let \(A_0 = \sigma(\emptyset)\) and notice that \(x_0 \in A_0\) since \(\langle x_0 \rangle \in \text{play}_\sigma\).
    Now suppose we have \(A_0 , A_1 , \ldots , A_n \in \mathcal A\) defined so that \(x_\ell \in A_\ell = \sigma(\langle x_j : j < \ell \rangle)\).
    Let \(A_{n+1} = \sigma(\langle x_0,x_1, \ldots , x_n \rangle )\).
    We claim that \(x_{n+1} \in A_{n+1}\).
    To see this, we know that \(\langle x_0 , x_1 , \ldots , x_{n+1} \rangle \in \text{play}_\sigma\) so \(x_{n+1} \in \sigma(\langle x_j : j < n+1 \rangle) = A_{n+1}\).
    Hence, the \(x_n\) arise from a single run of the game according to \(\sigma\).

    Since \(\sigma\) is winning for One, \(\{x_n:n\in\omega\} \not\in\mathcal B\).
\end{proof}

\begin{proposition}
    Let \(\mathcal A\) and \(\mathcal B\) be collections.
    Set
    \[
    \mathcal B_\Gamma = \{B \in \mathcal B : (\mbox{for all infinite } B' \subseteq B)[B' \in \mathcal B]\}
    \]
    If \(\mathcal A\) is a filter base, then \(\One \uparrow G_1(\mathcal A, \neg \mathcal B)\) if and only if \(\One \uparrow G_1(\mathcal A, \neg \mathcal B_\Gamma)\).
\end{proposition}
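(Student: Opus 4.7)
The plan is to prove the nontrivial direction $\One \uparrow G_1(\mathcal{A}, \neg \mathcal{B}) \Rightarrow \One \uparrow G_1(\mathcal{A}, \neg \mathcal{B}_\Gamma)$; the reverse is immediate since $\mathcal{B}_\Gamma \subseteq \mathcal{B}$ forces any winning strategy for One in the $\mathcal{B}_\Gamma$-game to also win the $\mathcal{B}$-game.

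Given a winning strategy $\sigma$ for One in $G_1(\mathcal{A}, \neg \mathcal{B})$, I will build a strategy $\tau$ for One in $G_1(\mathcal{A}, \neg \mathcal{B}_\Gamma)$ by letting $\tau$ simultaneously honor \emph{every} subsequence of the current Two-history as if it were the actual history. Explicitly, at round $n$ with Two-history $h = \langle y_0,\ldots,y_{n-1}\rangle$, enumerate the (finitely many, at most $2^n$) finite subsequences $s$ of $h$, compute $\sigma(s) \in \mathcal{A}$ for each such $s$, and use the filter-base property of $\mathcal{A}$ inductively to pick $A_n \in \mathcal{A}$ with $A_n \subseteq \bigcap_{s} \sigma(s)$. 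Define $\tau(h) = A_n$.

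To verify $\tau$ wins, fix any play $y_0,y_1,\ldots$ legal against $\tau$, and let $Y' = \{y_{n_k} : k \in \omega\}$ be an arbitrary infinite subset with $n_0 < n_1 < \cdots$. The subsequence $\langle y_{n_0},\ldots,y_{n_{k-1}}\rangle$ is one of the subsequences of $\langle y_0,\ldots,y_{n_k-1}\rangle$ tracked by $\tau$ at round $n_k$, so
\[
y_{n_k} \in \tau(\langle y_0,\ldots,y_{n_k-1}\rangle) \subseteq \sigma(\langle y_{n_0},\ldots,y_{n_{k-1}}\rangle).
\]
Thus $\langle y_{n_k}: k \in \omega \rangle$ is a legal play against $\sigma$, and since $\sigma$ is winning in $G_1(\mathcal{A}, \neg \mathcal{B})$, Lemma~\ref{lem:PreviousLemma} (applied to the resulting play) yields $Y' \in \mathcal{B}$. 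Taking $n_k = k$ shows $\{y_n : n \in \omega\} \in \mathcal{B}$ as well, so $\{y_n : n \in \omega\} \in \mathcal{B}_\Gamma$ and $\tau$ is winning.

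The only genuinely delicate point is making sure the filter-base property suffices: at each round we only need a common refinement of finitely many elements of $\mathcal{A}$, which is exactly what a filter base provides by induction, so no additional hypothesis (such as closure under countable intersections) is required. Everything else is bookkeeping about subsequences, and the argument does not depend on the ambient space or on $\mathcal{B}$ having any topological structure.
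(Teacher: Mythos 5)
Your proof is correct and follows essentially the same route as the paper's: at each round One uses the filter-base property to refine below the original strategy's responses to all (finitely many) subsequences of the current history, so that every infinite subsequence of Two's play is a legal run against the original strategy, and then Lemma \ref{lem:PreviousLemma} gives that every infinite subset of the outcome lies in \(\mathcal B\), hence the outcome is in \(\mathcal B_\Gamma\). The only cosmetic difference is that the paper routes the intersection through an auxiliary map \(\gamma\) and explicit inductive invariants, whereas you intersect \(\sigma\) over the subsequences directly; the content is the same.
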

\begin{proof}
    Let \(s\) be a winning strategy for One in \(G_1(\mathcal A, \neg\mathcal B)\).
    For \(\langle x_0,\cdots,x_n \rangle \in \mbox{play}_s\), define \(\gamma(x_0,\cdots,x_n) \in \mathcal A\) to be so that
    \[
    \gamma(x_0,\cdots,x_n) \subseteq \bigcap_{j=0}^n s(x_0,\cdots,x_j).
    \]

    Now we will define a winning strategy \(\sigma\) for One in \(G_1(\mathcal A, \neg\mathcal B_\Gamma)\).
    First set \(\sigma(\emptyset) = s(\emptyset) = A_0\).
    Now suppose we have defined \(\sigma(x_0,\cdots,x_{n-1})\) for all \(x_0,\cdots,x_{n-1}\) satisfying \(x_0 \in \sigma(\emptyset)\), \(x_1 \in \sigma(x_0)\), and so on.
    Suppose also that \(\sigma\) has been defined in such a way that for a fixed \(x_n \in \sigma(x_0,\cdots,x_{n-1})\),
    \begin{enumerate}[label=(\roman*)]
        \item for any \(0 \leq j_0 < j_1 < \cdots < j_k \leq n\), \(\langle x_{j_0}, x_{j_1}, \cdots, x_{j_k} \rangle \in \mbox{play}_s\), and
        \item for any \(0 \leq j_0 < j_1 < \cdots < j_k \leq \ell < n\), \(x_{\ell+1} \in \gamma(x_{j_0},x_{j_1},\cdots,x_{j_k})\).
    \end{enumerate}
    Define \(A_{n+1} \in \mathcal A\) to be so that
    \[
    A_{n+1} \subseteq \bigcap \{\gamma(x_{j_0},x_{j_1},\cdots,x_{j_k}) : 0 \leq j_0 < j_1 < \cdots < j_k \leq n\}
    \]
    Then set \(\sigma(x_0,\cdots,x_n) = A_{n+1}\).

    We check that this definition satisfies the two properties relative to \(n+1\).
    Fix \(x_{n+1} \in A_{n+1}\).
    Let \(0 \leq j_0 < j_1 < \cdots < j_k \leq n+1\).
    Notice that \(\langle x_{j_0},x_{j_1},\cdots,x_{j_{k-1}} \rangle \in \mbox{play}_s\) by the inductive hypothesis.
    So let \(A^*_{j_m} = s(x_{j_0},\cdots,x_{j_m})\) for \(0 \leq m < k\) and
    \[
    A^*_{j_k} = s(x_{j_0},x_{j_1},\cdots,x_{j_{k-1}}).
    \]
    It follows that \(A_{n+1} \subseteq A^*_{j_k}\) and that \(x_{n+1} \in A^*_{j_k}\).
    Hence,
    \[
    A^*_{j_0}, x_{j_0}, \cdots, A^*_{j_k}, x_{j_k}
    \]
    is a play according to \(s\).

    The second property holds by the definition of \(\sigma\).
    This completes the definition of \(\sigma\).

    We now show that \(\sigma\) is a winning strategy.
    Suppose \(A_0, x_0, A_1, x_1, \cdots\) is a full run of the game \(G_1(\mathcal A, \neg\mathcal B_\Gamma)\) played according to \(\sigma\).
    Suppose, by way of contradiction, that there is an infinite \(B' \subseteq \{x_n : n \in \omega\}\) so that \(B' \notin \mathcal B\).
    Say \(B' = \{x_{j_n} : n \in \omega\}\).
    Then by the construction of \(\sigma\), \(\langle x_0, \cdots, x_{j_n} \rangle \in \mbox{play}_s\) for all \(n \in \omega\).
    Hence, \(\{x_{j_n} : n \in \omega\} \in \mbox{play}^\omega_s\), and so by the Lemma \ref{lem:PreviousLemma}, \(\{x_{j_n} : n \in \omega\} = B' \in \mathcal B\), a contradiction.
    Thus \(\{x_n : \in \omega\} \in \mathcal B_\Gamma\), and \(\sigma\) is a winning strategy.

    The other direction of the proof is obvious.
\end{proof}

\begin{corollary}
    \label{lem:Open=Gamma}
	Let \(\mathcal A\) be an ideal-base.
	Then One has winning (pre-determined) strategy for the game  \(G_1(\mathscr N[\mathcal A], \neg\mathcal O(X,\mathcal B))\) if and only if One has winning (pre-determined) strategy for \(G_1(\mathscr N[\mathcal A],\neg\Gamma(X,\mathcal B))\).
	The same is true for pre-determined strategies.
\end{corollary}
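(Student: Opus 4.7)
The plan is to derive this corollary from the immediately preceding proposition by taking $\mathscr N[\mathcal A]$ to play the role of $\mathcal A$ and $\mathcal O(X,\mathcal B)$ to play the role of $\mathcal B$. The first step is to check that $\mathscr N[\mathcal A]$ is a filter-base: given $A_1, A_2 \in \mathcal A$, the ideal-base property produces $A_3 \in \mathcal A$ with $A_1 \cup A_2 \subseteq A_3$, and any open set containing $A_3$ contains both $A_1$ and $A_2$, so $\mathscr N(A_3) \subseteq \mathscr N(A_1) \cap \mathscr N(A_2)$.

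With that in hand, the proposition gives that $\One \uparrow G_1(\mathscr N[\mathcal A], \neg \mathcal O(X,\mathcal B))$ if and only if $\One \uparrow G_1(\mathscr N[\mathcal A], \neg \mathcal O(X,\mathcal B)_\Gamma)$, so the remaining step is to identify $\mathcal O(X,\mathcal B)_\Gamma$ with $\Gamma(X,\mathcal B)$. Unpacking the definitions, $\mathscr U \in \mathcal O(X,\mathcal B)_\Gamma$ means that $\mathscr U \in \mathcal O(X,\mathcal B)$ and every infinite $\mathscr U' \subseteq \mathscr U$ is still in $\mathcal O(X,\mathcal B)$. The latter forces $\mathscr U$ to be infinite and forces $\{U \in \mathscr U : B \not\subseteq U\}$ to be finite for every $B \in \mathcal B$, since otherwise the collection $\{U \in \mathscr U : B \not\subseteq U\}$ would form an infinite subcollection which fails to $\mathcal B$-cover. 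Conversely, any infinite subcollection of such a $\mathscr U$ manifestly inherits both the $\mathcal B$-covering and open-cover properties, and this matches the defining condition of $\Gamma(X,\mathcal B)$.

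For the predetermined version, I would inspect the proof of the proposition and observe that the construction preserves the predetermined property: when the input strategy $s$ depends only on the turn number, the intersections $\bigcap_{j=0}^n s(x_0,\dots,x_j)$ defining $\gamma(x_0,\dots,x_n)$ depend only on $n$, and hence one can arrange $\gamma$ and the subsequent $A_{n+1}$ to depend only on $n$ as well, yielding a predetermined strategy in the $\Gamma$-game. The main obstacle I anticipate is the careful set-theoretic identification of $\mathcal O(X,\mathcal B)_\Gamma$ and $\Gamma(X,\mathcal B)$, specifically verifying that every infinite sub-collection of a $\gamma_\mathcal B$-cover still covers $X$; this is immediate in the intended applications where $\bigcup \mathcal B = X$ and points of $X$ are absorbed by members of $\mathcal B$, so that the $\mathcal B$-cofiniteness condition forces cofinitely many $U$ to contain each point.
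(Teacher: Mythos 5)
Your proposal is correct and takes essentially the same route as the paper: invoke the preceding proposition with the filter base \(\mathscr N[\mathcal A]\) (coming from the ideal-base property of \(\mathcal A\)), identify \(\mathcal O(X,\mathcal B)_\Gamma\) with \(\Gamma(X,\mathcal B)\), and observe that the construction can be carried out in a predetermined fashion (the paper simply sets \(\sigma(n) = s(0) \cap \cdots \cap s(n)\)). The small subtleties you flag in the identification of \(\mathcal O(X,\mathcal B)_\Gamma\) with \(\Gamma(X,\mathcal B)\) (finite covers, and whether infinite subcollections still cover \(X\)) are glossed over in the paper's proof in exactly the same way, so they do not distinguish your argument from the original.
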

\begin{proof}
    Notice that if \(\mathcal A\) is an ideal base, then \(\mathscr N[\mathcal A]\) is a filter base.
    Also notice that \(\mathcal O(X,\mathcal B)_\Gamma\) is the same thing as \(\Gamma(X, \mathcal B)\).
    This shows that \(\One \uparrow G_1(\mathscr N[\mathcal A], \neg\mathcal O(X,\mathcal B)) \iff \One \uparrow G_1(\mathscr N[\mathcal A],\neg\Gamma(X,\mathcal B))\).

    The fact that the results hold for pre-determined strategies follows from a modification of the proof of the proposition.
    Simply set
    \[
    \sigma(n) = s(0) \cap \cdots \cap s(n)
    \]
    and check that this works.
\end{proof}

\section{An Order on Single Selection Games}

\begin{definition}
    Let \(\mathcal A\), \(\mathcal B\), \(\mathcal C\), and \(\mathcal D\) be collections and \(\alpha\) be an ordinal.
    Say that \(G^\alpha_1(\mathcal A, \mathcal C) \leq_{\Two} G^\alpha_1(\mathcal B, \mathcal D)\) if
    \begin{itemize}
        \item \(\Two \underset{\text{mark}}{\uparrow} G^\alpha_1(\mathcal A, \mathcal C) \implies \Two \underset{\text{mark}}{\uparrow} G^\alpha_1(\mathcal B, \mathcal D)\),
        \item \(\Two \uparrow G^\alpha_1(\mathcal A, \mathcal C) \implies \Two \uparrow G^\alpha_1(\mathcal B, \mathcal D)\),
        \item \(\One \not\uparrow G^\alpha_1(\mathcal A, \mathcal C) \implies \One \not\uparrow G^\alpha_1(\mathcal B, \mathcal D)\), and
        \item \(\One \not\underset{\text{pre}}{\uparrow} G^\alpha_1(\mathcal A, \mathcal C) \implies \One \not\underset{\text{pre}}{\uparrow} G^\alpha_1(\mathcal B, \mathcal D)\).
    \end{itemize}
\end{definition}

Notice that if \(G^\alpha_1(\mathcal A, \mathcal C) \leq_{\Two} G^\alpha_1(\mathcal B, \mathcal D)\) and \(G^\alpha_1(\mathcal B, \mathcal D) \leq_{\Two} G^\alpha_1(\mathcal A, \mathcal C)\), then the games are equivalent.
Also notice that \(\leq_{\Two}\) is transitive.

\begin{theorem}\label{Translation}
    Let \(\mathcal A\), \(\mathcal B\), \(\mathcal C\), and \(\mathcal D\) be collections and \(\alpha\) be an ordinal.
    Suppose there are functions
    \begin{itemize}
        \item \(\overleftarrow{T}_{\One,\xi}:\mathcal B \to \mathcal A\) and
        \item \(\overrightarrow{T}_{\Two,\xi}: \bigcup \mathcal A \times \mathcal B \to \bigcup \mathcal B\)
    \end{itemize}
    for each \(\xi \in \alpha\), so that
    \begin{enumerate}[label=(Tr\arabic*)]
        \item \label{TranslationA} If \(x \in \overleftarrow{T}_{\One,\xi}(B)\), then \(\overrightarrow{T}_{\Two,\xi}(x,B) \in B\)
        \item \label{TranslationB} If \(x_\xi \in \overleftarrow{T}_{\One,\xi}(B_\xi)\) and \(\{x_\xi : \xi \in \alpha\} \in \mathcal C\), then \(\{\overrightarrow{T}_{\Two,\xi}(x_\xi,B_\xi) : \xi \in \alpha\} \in \mathcal D\).
    \end{enumerate}
    Then \(G^\alpha_1(\mathcal A,\mathcal C) \leq_{\Two} G^\alpha_1(\mathcal B, \mathcal D)\).
\end{theorem}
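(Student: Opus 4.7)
The plan is to handle all four implications in the definition of $\leq_{\Two}$ by the same direct translation recipe supplied by the maps $\overleftarrow{T}_{\One,\xi}$ and $\overrightarrow{T}_{\Two,\xi}$. In each case, a strategy in one game is converted into a strategy in the other by (a) using $\overleftarrow{T}_{\One,\xi}$ to translate moves of One between $\mathcal B$ and $\mathcal A$, and (b) using $\overrightarrow{T}_{\Two,\xi}$ to translate moves of Two between $\bigcup\mathcal A$ and $\bigcup\mathcal B$. Property \ref{TranslationA} ensures that the translated moves are legal, and property \ref{TranslationB} guarantees that the winning condition is preserved.

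For the two implications about Two (Markov and full-information), I would start with a winning strategy $\tau$ for Two in $G^\alpha_1(\mathcal A,\mathcal C)$ and define a strategy $\widetilde\tau$ in $G^\alpha_1(\mathcal B,\mathcal D)$ as follows. Given a history $\langle B_\xi : \xi \leq \eta\rangle$ of moves by One in $G^\alpha_1(\mathcal B,\mathcal D)$, let $A_\xi = \overleftarrow{T}_{\One,\xi}(B_\xi)$, let $x_\eta = \tau(\langle A_\xi : \xi \leq \eta\rangle) \in A_\eta$, and set $\widetilde\tau(\langle B_\xi : \xi \leq \eta\rangle) = \overrightarrow{T}_{\Two,\eta}(x_\eta, B_\eta)$. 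By \ref{TranslationA}, $\widetilde\tau$'s output lies in $B_\eta$, so it is a legal strategy, and by \ref{TranslationB}, the collection of outputs is in $\mathcal D$ whenever $\{x_\xi : \xi \in \alpha\} \in \mathcal C$, which is guaranteed by $\tau$. In the Markov case one restricts to $\widetilde\tau(B,\xi) = \overrightarrow{T}_{\Two,\xi}(\tau(\overleftarrow{T}_{\One,\xi}(B),\xi), B)$, which depends only on $B$ and $\xi$ as required.

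For the two implications about One, I proceed contrapositively. Given a winning full-information strategy $\sigma$ for One in $G^\alpha_1(\mathcal B,\mathcal D)$, I build one, $\widetilde\sigma$, in $G^\alpha_1(\mathcal A,\mathcal C)$ by inductively simulating a run of $G^\alpha_1(\mathcal B,\mathcal D)$ alongside the given history: given $\langle x_\xi : \xi < \eta\rangle$ with $x_\xi$ a legal response in the simulated game, define the $B_\xi$'s and translated responses $y_\xi = \overrightarrow{T}_{\Two,\xi}(x_\xi,B_\xi)$ recursively by $B_\xi = \sigma(\langle y_\zeta : \zeta < \xi\rangle)$, and set $\widetilde\sigma(\langle x_\xi : \xi < \eta\rangle) = \overleftarrow{T}_{\One,\eta}(B_\eta)$. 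If some play $\{x_\xi : \xi \in \alpha\}$ against $\widetilde\sigma$ lands in $\mathcal C$, then \ref{TranslationB} puts $\{y_\xi : \xi \in \alpha\} \in \mathcal D$, contradicting that $\sigma$ is winning. For the predetermined case one simply sets $\widetilde\sigma(\xi) = \overleftarrow{T}_{\One,\xi}(\sigma(\xi))$ and runs the same argument without recursion.

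No step is genuinely difficult; the proof is essentially a bookkeeping exercise. The only subtlety worth being careful about is the recursive definition in the One full-information case, where $\widetilde\sigma$ at stage $\eta$ must be computed by reconstructing the entire sequence of $y_\xi$'s and $B_\xi$'s from the history of $x_\xi$'s alone. This is well-defined because each $B_\xi$ depends only on earlier $y_\zeta$'s, each $y_\xi$ depends only on $x_\xi$ and $B_\xi$, and hypothesis \ref{TranslationA} makes the translated moves legal at every stage.
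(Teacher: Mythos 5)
Your proposal is correct and follows essentially the same route as the paper: translate Two's (Markov or full) strategies forward via \(\overleftarrow{T}_{\One,\xi}\) and \(\overrightarrow{T}_{\Two,\xi}\), and handle One's cases contrapositively by simulating a run of \(G^\alpha_1(\mathcal B,\mathcal D)\) alongside the play, with \ref{TranslationA} giving legality and \ref{TranslationB} transferring the winning condition. The recursive bookkeeping you flag for One's full-information strategy is exactly what the paper does as well.
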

\begin{proof}
    Suppose \(\Two \underset{\text{mark}}{\uparrow} G^\alpha_1(\mathcal A, \mathcal C)\) and let \(\tau\) be a winning Markov strategy for Two.
    We define a winning Markov strategy for Two in \(G^\alpha_1(\mathcal B, \mathcal D)\).
    Toward this end, let \(\{B_\xi : \xi \in \alpha\} \subseteq \mathcal B\) be arbitrary and set \(A_\xi = \overleftarrow{T}_{\One,\xi}(B_\xi)\) and \(x_\xi = \tau(A_\xi , \xi)\).
    Define \(y_\xi = \overrightarrow{T}_{\Two,\xi}(x_\xi,B_\xi)\).
    Then
    \[
        \{ x_\xi : \xi \in \alpha\} \in \mathcal C \implies \{ y_\xi : \xi \in \alpha \} \in \mathcal D.
    \]

    Suppose \(\Two \uparrow G^\alpha_1(\mathcal A, \mathcal C)\) and let \(\tau\) be a winning strategy for Two.
    We define a strategy \(t\) for Two in \(G_1^\alpha(\mathcal B, \mathcal D)\) recursively.
    Suppose One plays \(B_0\).
    Then \(A_0 := \overleftarrow{T}_{\One,0}(B_0)\) is an initial play of \(G^\alpha_1(\mathcal A, \mathcal C)\).
    So \(x_0 := \tau(A_0) \in A_0\).
    Define
    \[
        t(B_0) = y_0 = \overrightarrow{T}_{\Two,0}(x_0,B_0).
    \]
    For \(\beta \in \alpha\), suppose we have \(\{A_\xi: \xi < \beta\}\), \(\{B_\xi : \xi < \beta \}\), \(\{ x_\xi : \xi < \beta\}\), and \(\{ y_\xi : \xi < \beta\}\) defined.
    Given \(B_\beta \in \mathcal B\), let \(A_\beta = \overleftarrow{T}_{\One,\beta}(B_\beta)\) and \(x_\beta = \tau(A_0, \ldots , A_\beta) \in A_\beta\).
    Then set
    \[
        t(B_0 , \ldots , B_\beta) = y_\beta = \overrightarrow{T}_{\Two,\beta}(x_\beta,B_\beta).
    \]
    This concludes the definition of \(t\).
    By \ref{TranslationA}, since \(x_\xi \in \overleftarrow{T}_{\One,\xi}(B_\xi)\), it follows that \(y_\xi \in B_\xi\).
    Using \ref{TranslationB}, we see that
    \[
        \{ x_\xi : \xi \in \alpha \} \in \mathcal C \implies \{ y_\xi : \xi \in \alpha \} \in \mathcal D.
    \]

    Suppose \(\One \uparrow G^\alpha_1(\mathcal B, \mathcal D)\) and let \(\sigma\) witness this.
    We will develop a strategy \(s\) for One in \(G^\alpha_1(\mathcal A, \mathcal B)\).
    Let \(B_0 = \sigma(\emptyset)\) and \(s(\emptyset) = A_0 = \overleftarrow{T}_{\One,0}(B_0)\).
    Then, for \(\beta \in \alpha\), suppose we have \(\{A_\xi:\xi \leq \beta\} \subseteq \mathcal A\), \(\{ B_\xi: \xi \leq \beta \} \subseteq \mathcal B\), \(\{x_\xi: \xi < \beta\}\), and \(\{y_\xi:\xi < \beta\}\) defined in the right way.
    Suppose \(x_\beta \in A_\beta\).
    Then set \(y_\beta = \overrightarrow{T}_{\Two,\beta}(x_\beta,B_\beta) \in B_\beta\), \(B_{\beta+1} = \sigma( y_0 , \ldots , y_\beta)\) and
    \[
        s( x_0 , \ldots , x_\beta) = A_{\beta+1} = \overleftarrow{T}_{\One,\beta+1}(B_{\beta+1}).
    \]
    After the run of the game is completed, let \(x_{\xi+1} \in s(x_0,\cdots,x_\xi)\) for all \(\xi \in \alpha\) and \(x_0 \in s(\emptyset)\).
    Then \ref{TranslationA} gives us that \(\overrightarrow{T}_{\Two,\xi}(x_\xi,B_\xi) = y_\xi \in B_\xi\).
    As \(\sigma\) is a winning strategy for One in \(\One \uparrow G^\alpha_1(\mathcal B, \mathcal D)\), \ref{TranslationB} yields
    \[
        \{ y_\xi : \xi \in \alpha \} \not\in \mathcal D \implies \{ x_\xi : \xi \in \alpha \} \not\in \mathcal C
    \]

    Suppose \(\One \underset{\text{pre}}{\uparrow} G^\alpha_1(\mathcal B, \mathcal D)\) and let \(\{B_\xi : \xi \in \alpha \}\) represent One's winning strategy.
    Let \(A_\xi = \overleftarrow{T}_{\One,\xi}(B_\xi)\) for each \(\xi\in\alpha\).
    We will show that \(\{A_\xi:\xi\in\alpha\}\) forms a winning strategy for One in \(G^\alpha_1(\mathcal A, \mathcal C)\).
    Let \(x_\xi \in A_\xi\) for all \(\xi \in \alpha\) and let \(y_\xi = \overrightarrow{T}_{\Two,\xi}(x_\xi,B_\xi)\).
    By \ref{TranslationA}, \(y_\xi \in B_\xi\) for all \(\xi\in\alpha\) and so \(\{y_\xi:\xi\in\alpha\} \not\in \mathcal D\).
    By \ref{TranslationB}, we see that \(\{x_\xi : \xi \in \alpha\} \not\in \mathcal C\).
\end{proof}

In some situations, the use of both maps is not necessary as the translation between player One's moves simply comes from lifting the translation of player Two's selections.

\begin{corollary}\label{corollary:EasyTranslate}
    Let \(\mathcal A\), \(\mathcal B\), \(\mathcal C\), and \(\mathcal D\) be collections.
    Suppose there is a map \(\phi : \left( \bigcup \mathcal B \right) \times \omega \to \left( \bigcup \mathcal A \right)\) so that
    \begin{itemize}
        \item
        For all \(B \in \mathcal B\) and all \(n \in \omega\), \(\{ \phi(y,n) : y \in B\} \in \mathcal A\)
        \item
        if \(\{ \phi(y_n,n) : n \in \omega \} \in \mathcal C\), then \(\{ y_n : n \in \omega \} \in \mathcal D\)
    \end{itemize}
    Then \(G_1(\mathcal A, \mathcal C) \leq_\Two G_1(\mathcal B, \mathcal D)\).
\end{corollary}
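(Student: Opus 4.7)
The plan is to package the map $\phi$ into a pair of translation functions and invoke Theorem \ref{Translation} with $\alpha = \omega$. For each $n \in \omega$, I would define $\overleftarrow{T}_{\One,n} : \mathcal B \to \mathcal A$ by
\[
\overleftarrow{T}_{\One,n}(B) = \{ \phi(y,n) : y \in B \},
\]
which is a member of $\mathcal A$ by the first bullet of the hypothesis. This is the natural choice since player Two's response $x$ to a set $B \in \mathcal B$ in the translated game should record, through $\phi$, enough information to reconstruct some $y \in B$.

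Next, I would define $\overrightarrow{T}_{\Two,n} : \left( \bigcup \mathcal A \right) \times \mathcal B \to \bigcup \mathcal B$ as follows. Using the axiom of choice, for each pair $(x,B)$ with $x \in \overleftarrow{T}_{\One,n}(B)$, fix some $y \in B$ with $\phi(y,n) = x$ and set $\overrightarrow{T}_{\Two,n}(x,B) = y$; on the remaining pairs, define the value arbitrarily (it will not affect the argument, since \ref{TranslationA} and \ref{TranslationB} only care about the case $x \in \overleftarrow{T}_{\One,n}(B)$). Property \ref{TranslationA} then holds by construction: $y \in B$ means $\overrightarrow{T}_{\Two,n}(x,B) \in B$.

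To verify \ref{TranslationB}, suppose $x_n \in \overleftarrow{T}_{\One,n}(B_n)$ for each $n$ and $\{x_n : n \in \omega\} \in \mathcal C$. Setting $y_n = \overrightarrow{T}_{\Two,n}(x_n,B_n)$ gives $\phi(y_n,n) = x_n$, so $\{\phi(y_n,n) : n \in \omega\} = \{x_n : n \in \omega\} \in \mathcal C$, and the second bullet of the hypothesis delivers $\{y_n : n \in \omega\} \in \mathcal D$. With both translation properties in hand, Theorem \ref{Translation} immediately yields $G_1(\mathcal A, \mathcal C) \leq_\Two G_1(\mathcal B, \mathcal D)$.

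There is no serious obstacle in this argument; essentially the work is just repackaging $\phi$ into the two-function form required by Theorem \ref{Translation}. The only mild subtleties are the invocation of choice to pick a $\phi(\cdot,n)$-preimage of $x$ inside $B$, and extending $\overrightarrow{T}_{\Two,n}$ to pairs $(x,B)$ where $x \notin \overleftarrow{T}_{\One,n}(B)$ so that its stated domain is respected.
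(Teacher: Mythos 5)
Your proposal is correct and matches the paper's proof essentially verbatim: the paper also sets \(\overleftarrow{T}_{\One,n}(B) = \phi[B \times \{n\}]\), chooses for each \(x\) in this image a preimage \(y_{x,n} \in B\) with \(\phi(y_{x,n},n) = x\) (arbitrary values elsewhere), verifies \ref{TranslationA} and \ref{TranslationB} exactly as you do, and concludes via Theorem \ref{Translation}. Your explicit remark that the choice may depend on the pair \((x,B)\) is if anything slightly cleaner than the paper's notation.
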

\begin{proof}
    Define \(\overleftarrow{T}_{\One,n}:\mathcal B \to \mathcal A\) by
    \[
    \overleftarrow{T}_{\One,n}(B) = \phi[B \times \{n\}].
    \]
    From the first assumption on \(\phi\) we know that \(\overleftarrow{T}_{\One,n}\) really does produce objects in \(\mathcal A\).
    For \(x \in \phi[B \times \{n\}]\) and \(n \in \omega\), choose \(y_{x,n} \in B\) so that \(\phi(y_{x,n},n) = x\).
    Define \(\overrightarrow{T}_{\Two,n}:\bigcup \mathcal A \times \mathcal B \to \bigcup \mathcal B\) by
    \[
    \overrightarrow{T}_{\Two,n}(x)(B) = y_{x,n}
    \]
    if possible and otherwise set it to be an arbitrary element of \(\bigcup \mathcal B\).
    So if \(x \in \overleftarrow{T}_{\One,n}(B)\), then \(\overrightarrow{T}_{\Two,n}(x)(B) = y_{x,n} \in B\).

    Now suppose \(x_n \in \phi[B_n \times \{n\}]\) and \(\{x_n : n \in \omega\} \in \mathcal C\).
    Then \(\{\phi(y_{x_n,n},n) : n \in \omega\} \in \mathcal C\).
    By the second assumption on \(\phi\), it follows that \(\{y_{x_n} : n \in \omega\} \in \mathcal D\).
    Thus \(\{\overrightarrow{T}_{\Two,n}(x_n)(B_n) : n \in \omega\} \in \mathcal D\).
    This completes the proof.
\end{proof}

\section{Equivalent and Dual Classes of Games}

\begin{corollary}\label{corollary:Roth=CFT=CDFT}
Let \(X\) be a Tychonoff space and \(\mathcal A, \mathcal B \subseteq \wp(X)\).
Then
\begin{enumerate}[label=(\roman*)]
    \item \label{RothA} \(G_1(\mathcal O(X,\mathcal A), \Lambda(X, \mathcal B)) \leq_{\Two} G_1(\Omega_{C_{\mathcal A}(X), \mathbf{0}}, \Omega_{C_{\mathcal B}(X),\mathbf{0}})\),
    \item \label{RothB} \(G_1(\Omega_{C_{\mathcal A}(X), \mathbf{0}}, \Omega_{C_{\mathcal B}(X),\mathbf{0}}) \leq_{\Two} G_1(\mathcal D_{C_{\mathcal A}(X)}, \Omega_{C_{\mathcal B}(X),\mathbf{0}})\), and
    \item \label{RothC} if \(\mathcal A\) consists of closed sets and \(X\) is \(\mathcal A\)-normal, then
    \[
    G_1(\mathcal D_{C_{\mathcal A}(X)}, \Omega_{C_{\mathcal B}(X),\mathbf{0}}) \leq_{\Two} G_1(\mathcal O(X,\mathcal A), \Lambda(X, \mathcal B)).
    \]
\end{enumerate}
Thus if \(\mathcal A\) consists of closed sets and \(X\) is \(\mathcal A\)-normal, then the three games are equivalent.
\end{corollary}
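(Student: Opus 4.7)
The plan is to produce three applications of Theorem \ref{Translation}, one per inclusion, each time exhibiting translation maps $\overleftarrow{T}_{\One,n}$ and $\overrightarrow{T}_{\Two,n}$ verifying \ref{TranslationA}--\ref{TranslationB}. Part \ref{RothB} is essentially formal: every $D \in \mathcal{D}_{C_{\mathcal{A}}(X)}$ is automatically a blade of $\mathbf{0}$, so the identity maps work and both translation axioms hold trivially.

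For part \ref{RothA}, given a blade $S \in \Omega_{C_{\mathcal{A}}(X),\mathbf{0}}$ and round $n$, I would translate $S$ to the cover
\[
\mathscr{U}_n(S) = \left\{ f^{-1}(-2^{-n},2^{-n}) : f \in S,\ f^{-1}(-2^{-n},2^{-n}) \neq X \right\}.
\]
This sits in $\mathcal{O}(X,\mathcal{A})$ because, for each $A \in \mathcal{A}$, the neighborhood $[\mathbf{0};A,2^{-n}]$ meets $S$, supplying an $f$ whose preimage contains $A$. Let $\overrightarrow{T}_{\Two,n}(U,S)$ pick any witnessing $f \in S$. To verify \ref{TranslationB}: if the selections $U_n$ form a member of $\Lambda(X,\mathcal{B})$, and $B \in \mathcal{B}$, $\varepsilon > 0$ are given, choose $n$ with $2^{-n} < \varepsilon$ and $B \subseteq U_n$ (possible since $B$ sits in infinitely many $U_n$). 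Then $|f_n(x)| < 2^{-n} < \varepsilon$ on $B$, so $f_n \in [\mathbf{0};B,\varepsilon]$, forcing $\mathbf{0} \in \text{cl}\{f_n\}$ in $C_{\mathcal{B}}(X)$.

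For part \ref{RothC}, which is where $\mathcal{A}$-normality enters, I would assign to $\mathscr{U} \in \mathcal{O}(X,\mathcal{A})$ the set
\[
D(\mathscr{U}) = \bigcup_{U \in \mathscr{U}} \left\{ f \in C_{\mathcal{A}}(X) : f \upharpoonright (X \setminus U) \equiv 1 \right\}.
\]
Density of $D(\mathscr{U})$ follows from $\mathcal{A}$-normality: given a basic neighborhood $[g;A,\varepsilon]$, pick $U \in \mathscr{U}$ with $A \subseteq U$; apply $\mathcal{A}$-normality to produce continuous $h:X\to[0,1]$ with $h \equiv 0$ on $A$ and $h \equiv 1$ on $X\setminus U$; then $f := g(1-h)+h$ agrees with $g$ on $A$ and is identically $1$ on $X\setminus U$, so $f \in D(\mathscr{U})\cap[g;A,\varepsilon]$. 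Let $\overrightarrow{T}_{\Two,n}(f,\mathscr{U})$ choose some $U \in \mathscr{U}$ witnessing $f \equiv 1$ on $X \setminus U$. To check \ref{TranslationB}, observe $\mathbf{0} \notin D(\mathscr{U}_n)$ since $X \notin \mathscr{U}_n$; so if $\mathbf{0} \in \text{cl}\{f_n\}$ in the $T_1$ space $C_{\mathcal{B}}(X)$, then for each $B \in \mathcal{B}$ infinitely many $f_n$ enter $[\mathbf{0};B,\tfrac{1}{2}]$; each such $n$ satisfies $|f_n|<\tfrac{1}{2}$ on $B$ and $f_n \equiv 1$ on $X\setminus U_n$, forcing $B \subseteq U_n$, and hence $\{U_n\} \in \Lambda(X,\mathcal{B})$.

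The main obstacle is part \ref{RothC}: the argument depends on the precise form of $\mathcal{A}$-normality (a Urysohn-type separation between members of $\mathcal{A}$ and disjoint closed complements of open neighborhoods, which is exactly what the construction of $h$ uses), and on the subtle point that accumulation at $\mathbf{0}$ must translate into infinitely many witnesses rather than just one — a fact that hinges on $\mathbf{0}$ never appearing in $D(\mathscr{U}_n)$.
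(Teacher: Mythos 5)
Your proposal matches the paper's proof essentially step for step: part \ref{RothB} is the trivial observation that \(\mathcal D_{C_{\mathcal A}(X)} \subseteq \Omega_{C_{\mathcal A}(X),\mathbf 0}\), part \ref{RothA} uses the same translation \(f \mapsto f^{-1}[(-2^{-n},2^{-n})]\), and part \ref{RothC} uses the same dense sets of functions identically \(1\) off some member of the cover, with the identical \(\mathcal A\)-normality construction \(g(1-h)+h\). The only cosmetic difference is in verifying \ref{TranslationB} for part \ref{RothC}: you invoke the fact that \(\mathbf 0 \notin \overleftarrow{T}_{\One,n}(\mathscr U_n)\) together with separation in \(C_{\mathcal B}(X)\) to get infinitely many indices, whereas the paper runs an equivalent induction choosing functions outside the finitely many already used — the same underlying fact in both cases.
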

\begin{proof}
    Let \(\phi : C_{\mathcal A}(X) \times \omega \to \mathscr T_X\) be defined by \(\phi(f,n) = f^{-1}[(-2^{-n} , 2^{-n})]\).
    Suppose \(F \in \Omega_{C_{\mathcal A}(X) , \mathbf 0}\) and let both \(A \in \mathcal A\) and \(n \in \omega\) be arbitrary.
    Choose \(f \in F\) so that \(f \in [\mathbf 0; A , 2^{-n}]\) and notice that \(A \subseteq f^{-1}[(-2^{-n},2^{-n})]\).
    Hence, \(\{ \phi(f,n) : f \in F\} \in \mathcal O(X,\mathcal A)\).

    Next, suppose \(\{ \phi(f_n , n) : n \in \omega\} \in \Lambda(X,\mathcal B)\).
    Let \(B \in \mathcal B\) and \(\varepsilon > 0\) be arbitrary.
    Then, there is \(n \in \omega\) large enough so that \(B \subseteq f_n^{-1}[(-2^{-n} , 2^{-n})]\) and \(2^{-n} < \varepsilon\).
    It follows that \(f \in [\mathbf 0 ; B , \varepsilon]\).
    By Corollary \ref{corollary:EasyTranslate}, this completes \ref{RothA}.

    Next we check that \(G_1(\Omega_{C_{\mathcal A}(X), \mathbf{0}}, \Omega_{C_{\mathcal B}(X),\mathbf{0}}) \leq_{\Two} G_1(\mathcal D_{C_{\mathcal A}(X)}, \Omega_{C_{\mathcal B}(X),\mathbf{0}})\).
    As \(\mathcal D_{C_{\mathcal A}(X)} \subseteq \Omega_{C_{\mathcal A}(X), \mathbf{0}}\), this is true.
    Simply have Two use the exact same counter-play or strategy.

    For \ref{RothC}, define
    \begin{itemize}
        \item \(\overleftarrow{T}_{\One,n}:\mathcal O(X,\mathcal A) \to \mathcal D_{C_{\mathcal A}(X)}\) by
        \[
        \overleftarrow{T}_{\One,n}(\mathcal U) = \{f \in C_{\mathcal A}(X) : (\exists U \in \mathcal U)[f[X \smallsetminus U] = 1]\}
        \]
        \item \(\overrightarrow{T}_{\Two,n}:C_{\mathcal A}(X) \times \mathcal O(X,\mathcal A) \to \mathscr{T}_X\) by \(\overrightarrow{T}_{\Two,n}(f,\mathcal U) = U\), where \(U \in \mathcal U\) is such that \(f[X \smallsetminus U] = \{1\}\) (if possible, otherwise set \(\overrightarrow{T}_{\Two,n}(f,\mathcal U) = X\)).
    \end{itemize}

    First check that the functions are well-defined.
    To see that \(\overleftarrow{T}_{\One,n}(\mathcal U)\) is a dense set in \(C_{\mathcal A}(X)\), consider a basic open set \([f; A, \varepsilon]\).
    Since \(\mathcal U \in \mathcal O(X, \mathcal A)\), there is a \(U \in \mathcal U\) so that \(A \subseteq U\).
    Since \(X\) is \(\mathcal A\)-normal, we can find a continuous function \(g:X \to [0,1]\) so that \(g[A] = 0\) and \(g[X \smallsetminus A] = 1\).
    Define \(h = f(1-g) + g\).
    Then \(h\restriction_A = f\), \(h[X \smallsetminus U] = 1\).
    So \(h \in [f;A,\varepsilon] \cap \overleftarrow{T}_{\One,n}(\mathcal U, n)\).
    This shows that \(\overleftarrow{T}_{\One,n}(\mathcal U, n)\) is dense.
    It is clear that \(\overrightarrow{T}_{\Two,n}\) maps into the appropriate space.

    We next check \ref{TranslationA}.
    Suppose \(f \in \overleftarrow{T}_{\One,n}(\mathcal U)\).
    We need to check that \(\overrightarrow{T}_{\Two,n}(f,\mathcal U) \in \mathcal U\).
    Because \(f \in \overleftarrow{T}_{\One,n}(\mathcal U)\), we can find a \(U \in \mathcal U\) so that \(f[X \smallsetminus U] = \{1\}\).
    Thus \(\overrightarrow{T}_{\Two,n}(f,\mathcal U) = U \in \mathcal U\).

    Now we check \ref{TranslationB}, that is, that the \(\overrightarrow{T}_{\Two,n}\) translate from \(\Omega_{C_{\mathcal B}(X),\mathbf{0}}\) to \(\Lambda(X,\mathcal B)\).
    Suppose \(f_n \in \overleftarrow{T}_{\One,n}(\mathcal U_n)\) and
    \[
    \{f_n : n \in \omega\} \in \Omega_{C_{\mathcal B}(X),\mathbf{0}}.
    \]
    We need to see that \(\{\overrightarrow{T}_{\Two,n}(f_n, \mathcal U_n) : n \in \omega\} \in \Lambda(X,\mathcal B)\).
    Notice \(\overrightarrow{T}_{\Two,n}(f_n,\mathcal U_n) = U_n \in \mathcal U_n\) with the property that \(f_n[X \smallsetminus U_n] = 1\).
    Let \(B \in \mathcal B\).
    Then there is an \(n_0\) so that \(f_{n_0} \in [\mathbf 0;B,1]\).
    Thus \(B \subseteq f_{n_0}^{-1}[(-1,1)]\), and so \(B \cap (X \setminus U_{n_0}) = \emptyset\).
    Therefore \(B \subseteq U_{n_0}\).
    There is an \(n_1 > n_0\) so that \(f_{n_1} \in [\mathbf 0;B,1] \smallsetminus \{f_k : k \leq n_0\}\) and so \(B \subseteq U_{n_1}\).
    Continuing this process inductively, we see that \(B\) is covered infinitely many times and that \(\{U_n : n \in \omega\} \in \Lambda(X,\mathcal B)\).
\end{proof}

\begin{corollary}\label{corollary:PO=Gru=CD}
Let \(X\) be a Tychonoff space and \(\mathcal A, \mathcal B \subseteq \wp(X)\).
Then
\begin{enumerate}[label=(\roman*)]
    \item \label{POA} \(G_1(\mathscr{N}_{C_{\mathcal{A}(X)}}(\mathbf 0), \neg \Omega_{C_{\mathcal B(X)},\mathbf 0}) \leq_{\Two} G_1(\mathscr{N}[\mathcal A], \neg \Lambda(X,\mathcal B))\)
    \item \label{POB} \(G_1(\mathscr{T}_{C_{\mathcal{A}(X)}}, \neg \Omega_{C_{\mathcal B(X)},\mathbf 0}) \leq_{\Two} G_1(\mathscr{N}_{C_{\mathcal{A}(X)}}(\mathbf 0), \neg \Omega_{C_{\mathcal B(X)},\mathbf 0})\)
    \item \label{POC} \(G_1(\mathscr{T}_{C_{\mathcal A}(X)}, \mbox{CD}_{C_{\mathcal B}(X)}) \leq_{\Two} G_1(\mathscr{T}_{C_{\mathcal{A}(X)}}, \neg \Omega_{C_{\mathcal B(X)},\mathbf 0})\)
    \item \label{POD} If \(\mathcal A\) consists of closed sets, \(X\) is \(\mathcal A\)-normal, and \(\mathcal B\) consists of \(\mathbb R\)-bounded sets, then
    \[
    G_1(\mathscr{N}[\mathcal A], \neg \Lambda(X,\mathcal B)) \leq_{\Two} G_1(\mathscr{T}_{C_{\mathcal A}(X)}, \mbox{CD}_{C_{\mathcal B}(X)}).
    \]
\end{enumerate}
Thus if \(\mathcal A\) consists of closed sets, \(X\) is \(\mathcal A\)-normal, and \(\mathcal B\) consists of \(\mathbb R\)-bounded sets, then all these games are equivalent.
\end{corollary}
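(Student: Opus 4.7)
The plan is to handle (i)--(iv) separately. Parts (i) and (iv) are the substantive translation-type results, (ii) follows from a simple containment of move-sets, and (iii) is a minor modification. The main work lies in (iv), which uses both $\mathcal A$-normality and the $\mathbb R$-boundedness of $\mathcal B$.

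For (i), I would invoke Corollary \ref{corollary:EasyTranslate} with $\phi(f, n) = f^{-1}[(-2^{-n}, 2^{-n})]$. For $F \in \Omega_{C_{\mathcal A}(X), \mathbf 0}$, any $A \in \mathcal A$ admits $f \in F \cap [\mathbf 0; A, 2^{-n}]$, giving $A \subseteq \phi(f, n)$; and if $\{\phi(f_n, n)\}$ is $\Lambda$-covering for $\mathcal B$, then for each $B \in \mathcal B$ and each $N$ there are infinitely many $n \geq N$ with $B \subseteq f_n^{-1}[(-2^{-n}, 2^{-n})]$, hence $f_n \in [\mathbf 0; B, 2^{-N}]$, which places $\mathbf 0$ in the closure of $\{f_n\}$. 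Part (ii) is immediate from $\mathscr N_{C_{\mathcal A}(X)}(\mathbf 0) \subseteq \mathscr T_{C_{\mathcal A}(X)}$: a strategy for Two on the larger move-set restricts, and a strategy for One on the smaller extends. For (iii), I would apply Theorem \ref{Translation} with $\overleftarrow{T}_{\One,n}(T) = T \setminus \{\mathbf 0\}$ (open and non-empty when $\mathbf 0$ is not isolated in $T$) and $\overrightarrow{T}_{\Two,n}(f, T) = f$; Two's resulting sequence avoids $\mathbf 0$, and any resulting closed discrete set has closure missing $\mathbf 0$.

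Part (iv) is the main step. Fix once and for all, for each non-empty open $T \subseteq C_{\mathcal A}(X)$, a basic open $[f_T; A_T, \varepsilon_T] \subseteq T$ with $A_T \in \mathcal A$, and set $\overleftarrow{T}_{\One,n}(T) = A_T$. Given an open $U \supseteq A_T$, invoke $\mathcal A$-normality to obtain a continuous $h_{T,U}: X \to [0,1]$ with $h_{T,U}|_{A_T} \equiv 0$ and $h_{T,U}|_{X \setminus U} \equiv 1$, and define
\[
    \overrightarrow{T}_{\Two,n}(U,T) = f_T(1 - h_{T,U}) + n\, h_{T,U}.
\]
This is continuous, agrees with $f_T$ on $A_T$ (hence lies in $T$, giving (Tr1)), and equals the constant $n$ on $X \setminus U$. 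To verify (Tr2), suppose $\{U_n\} \notin \Lambda(X, \mathcal B)$, witnessed by $B^* \in \mathcal B$ with $\{n : B^* \subseteq U_n\}$ finite. For all but finitely many $n$, some $x \in B^* \cap (X \setminus U_n)$ gives $f_n(x) = n$, so $\sup_{x \in B^*}|f_n(x)| \geq n$. Given any $g \in C_{\mathcal B}(X)$, $\mathbb R$-boundedness of $B^*$ forces $g$ to be bounded on $B^*$ by some $M$, whence the neighborhood $[g; B^*, 1]$ of $g$ contains only those $f_n$ with $n \leq M + 1$, apart from the finitely many $n$ with $B^* \subseteq U_n$. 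Hence $\{f_n\}$ is closed discrete in $C_{\mathcal B}(X)$.

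The main obstacle is the construction in (iv): the growing constant $n$ on $X \setminus U_n$ must be inserted without disturbing the prescribed behavior on $A_T$, which is exactly what $\mathcal A$-normality enables via the bump function $h_{T,U}$, while $\mathbb R$-boundedness of $\mathcal B$ is what converts unbounded function growth into closed discrete separation in the coarser $\mathcal B$-topology.
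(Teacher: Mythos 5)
Your parts (ii) and (iv) are essentially the paper's own argument: (ii) is the same move-set containment, and in (iv) your construction --- fixing a basic set \([f_T; A_T, \varepsilon_T] \subseteq T\), translating One's move \(T\) to \(A_T\), and answering Two's open \(U \supseteq A_T\) with \(f_T(1-h_{T,U}) + n\, h_{T,U}\) obtained from \(\mathcal A\)-normality --- is exactly the paper's, and your verification of \ref{TranslationB} (every \(g \in C_{\mathcal B}(X)\) has the neighborhood \([g; B^*, 1]\) containing only finitely many of the \(f_n\), by \(\mathbb R\)-boundedness of \(B^*\)) is, if anything, a cleaner rendering of the paper's contradiction argument. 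Part (iii) differs slightly: the paper uses the identity translation, observing that a closed discrete set in \(C_{\mathcal B}(X)\) cannot cluster at \(\mathbf 0\), whereas you delete \(\mathbf 0\) from One's move; note that \(T \setminus \{\mathbf 0\}\) is a legal move only if it is open, which requires \(\{\mathbf 0\}\) to be closed in \(C_{\mathcal A}(X)\) --- true when \(\bigcup \mathcal A\) is dense in \(X\), but not for arbitrary \(\mathcal A \subseteq \wp(X)\) (non-emptiness, by contrast, is automatic, since small non-zero constant functions lie in every neighborhood of \(\mathbf 0\)).

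The genuine gap is in (i). To get \(G_1(\mathscr{N}_{C_{\mathcal{A}}(X)}(\mathbf 0), \neg \Omega_{C_{\mathcal B}(X),\mathbf 0}) \leq_{\Two} G_1(\mathscr{N}[\mathcal A], \neg \Lambda(X,\mathcal B))\) from Corollary \ref{corollary:EasyTranslate}, the map \(\phi\) must go from \(\left(\bigcup \mathscr N[\mathcal A]\right) \times \omega\) --- open subsets of \(X\) --- into \(C_{\mathcal A}(X)\), with \(\{\phi(U,n) : U \in \mathscr N(A)\}\) an element of \(\mathscr N_{C_{\mathcal A}(X)}(\mathbf 0)\); your \(\phi(f,n) = f^{-1}[(-2^{-n},2^{-n})]\) goes in the opposite direction, so the corollary does not apply as invoked. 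What your two verifications actually establish are the hypotheses of Corollary \ref{corollary:EasyTranslate} for \(G_1(\mathcal O(X,\mathcal A), \Lambda(X,\mathcal B)) \leq_{\Two} G_1(\Omega_{C_{\mathcal A}(X),\mathbf 0}, \Omega_{C_{\mathcal B}(X),\mathbf 0})\), which is part \ref{RothA} of Corollary \ref{corollary:Roth=CFT=CDFT} --- the dual statement, not (i). The repair is easy but must be made: either run Theorem \ref{Translation} directly with \(\overleftarrow{T}_{\One,n}(\mathscr N(A)) = [\mathbf 0; A, 2^{-n}]\) and \(\overrightarrow{T}_{\Two,n}(f, \mathscr N(A)) = f^{-1}[(-2^{-n},2^{-n})]\), in which case your two computations become precisely \ref{TranslationA} and the contrapositive of \ref{TranslationB} (this is the paper's route), or else transfer the inequality you did prove through the dualities (Clontz's reflection result for \(\mathscr N[\mathcal A]\) versus \(\mathcal O(X,\mathcal A)\), and Corollary \ref{corollary:CFTDual}), which additionally requires checking that \(\leq_{\Two}\) reverses when both games are replaced by their duals; as written, neither step appears in your proposal.
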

\begin{proof}
First we check that \(G_1(\mathscr{N}_{C_{\mathcal{A}(X)}}(\mathbf 0), \neg \Omega_{C_{\mathcal B(X)},\mathbf 0}) \leq_{\Two} G_1(\mathscr{N}[\mathcal A], \neg \Lambda(X,\mathcal B))\).
Define
\begin{itemize}
    \item \(\overleftarrow{T}_{\One,n}:\mathscr{N}[\mathcal A] \to \mathscr{N}_{C_{\mathcal{A}(X)}}(\mathbf 0)\) by
    \[
    \overleftarrow{T}_{\One,n}(\mathscr{N}(A)) = [\mathbf 0; A, 2^{-n}]
    \]
    \item \(\overrightarrow{T}_{\Two,n}:C_{\mathcal A}(X) \times \mathscr{N}[\mathcal A] \to \mathscr{T}_X\) by \(\overrightarrow{T}_{\Two,n}(f,\mathscr{N}(A)) = f^{-1}[(-2^{-n},2^{-n})]\).
\end{itemize}

The maps are well-defined since the continuous pre-image of an open set is open.

We check \ref{TranslationA}.
Suppose \(f \in \overleftarrow{T}_{\One,n}(\mathscr{N}(A))\).
We need to check that \(\overrightarrow{T}_{\Two,n}(f,\mathscr{N}(A)) \in \mathscr{N}(A)\), i.e. that \(A \subseteq f^{-1}[(-2^{-n},2^{-n})]\).
Since \(f \in \overleftarrow{T}_{\One,n}(\mathscr{N}(A)) = [\mathbf 0; A, 2^{-n}]\), \(f[A] \subseteq (-2^{-n},2^{-n})\).
Thus \(A \subseteq f^{-1}[(-2^{-n},2^{-n})]\).

We check \ref{TranslationB}.
Suppose \(f_n \in \overleftarrow{T}_{\One,n}(\mathscr{N}(A_n))\) and that \(\{f_n : n \in \omega\} \notin \Omega_{C_{\mathcal B(X)},\mathbf 0}\).
Then \(f_n \in [\mathbf 0; A_n, 2^{-n}]\) and there is a \(B \in \mathcal B\), an \(\varepsilon > 0\), and an \(N \in \omega\) so that for all \(n \geq N\), \(f_n \notin [\mathbf 0; B, \varepsilon]\).
We need to show that \(\{f_n^{-1}[(-2^{-n},2^{-n})] : n \in \omega\} \notin \Lambda(X,\mathcal B)\).
We proceed by way of contradiction.
Suppose in particular that there is a \(n \geq N\) so that \(2^{-n} < \varepsilon\) and \(B \subseteq f_n^{-1}[(-2^{-n},2^{-n})]\).
Then \(f_n \in [\mathbf 0; B, 2^{-n}] \subseteq [\mathbf 0; B, \varepsilon]\).
This is a contradiction.

Then \(G_1(\mathscr{T}_{C_{\mathcal{A}(X)}}(\mathbf 0), \neg \Omega_{C_{\mathcal B(X)},\mathbf 0}) \leq_{\Two} G_1(\mathscr{N}_{C_{\mathcal{A}(X)}}(\mathbf 0), \neg \Omega_{C_{\mathcal B(X)},\mathbf 0})\) is true as \(\mathscr{N}_{C_{\mathcal{A}(X)}}(\mathbf 0) \subseteq \mathscr{T}_{C_{\mathcal{A}(X)}}\).

To see that \(G_1(\mathscr{T}_{C_{\mathcal A}(X)}, \mbox{CD}_{C_{\mathcal B}(X)}) \leq_{\Two} G_1(\mathscr{T}_{C_{\mathcal{A}(X)}}, \neg \Omega_{C_{\mathcal B(X)},\mathbf 0})\), observe that if Two can create a closed discrete set in response to player One, then Two has avoided having \(\mathbf 0\) as a cluster point.

Suppose \(X\) is \(\mathcal A\)-normal and \(\mathcal B\) consists of \(\mathbb R\)-bounded sets.
For \(U \in \mathscr{T}_{C_{\mathcal A}(X)}\), \(V \in \mathscr N(A_U)\), and \(n \in \omega\), identify a function \(f_{U,V,n}:X \to \mathbb R\) with the property that \(f_{U,V,n}\restriction_{A_U} = f_U\) and \(f_{U,V,n}[X \smallsetminus V] = \{n\}\).
Such a function exists for the following reason.
Since \(X\) is \(\mathcal A\)-normal, there is a function \(g\) so that \(g[A_U] = 0\) and \(g[X \smallsetminus V] = 1\).
Let \(f_{U,V,n} = f_U \cdot (1 - g) + n \cdot g\) and notice that \(f_{U,V,n}\) is as required.
Define
\begin{itemize}
    \item \(\overleftarrow{T}_{\One,n}:\mathscr{T}_{C_{\mathcal A}(X)} \to \mathscr{N}[\mathcal A]\) by \(\overleftarrow{T}_{\One,n}(U) = \mathscr N(A_U)\)
    \item \(\overrightarrow{T}_{\Two,n}:\mathscr{T}_X \times \mathscr{T}_{C_{\mathcal A}(X)} \to C_{\mathcal A}(X)\) by \(\overrightarrow{T}_{\Two,n}(V,U) = f_{U,V,n}\) (if possible, otherwise declare \(\overrightarrow{T}_{\Two,n}(V,U) = \mathbf 0\)).
\end{itemize}

We check \ref{TranslationA}.
Suppose \(V \in \overleftarrow{T}_{\One,n}(U) = \mathscr N(A_U)\).
We need to check that \(\overrightarrow{T}_{\Two,n}(V,U) = f_{U,V,n} \in U\).
Since \(V \in \mathscr N(A_U)\), \(f_{U,V,n}\) was chosen so that \(f_{U,V,n}\restriction_{A_U} = f_U\) which implies that \(f_{U,V,n} \in U\).

We check \ref{TranslationB}.
Suppose \(V_n \in \overleftarrow{T}_{\One,n}(U_n) = \mathscr N(A_n)\), where \(A_n = A_{U_n}\) and \(\{V_n : n \in \omega\} \notin \Lambda (X,\mathcal B)\).
Then there is a \(B \in \mathcal B\) and \(N\) so that for all \(n \geq N\), \(B \not \subseteq V_n\).
Say \(\overrightarrow{T}_{\Two,n}(V_n,U_n) = g_n\) and that \(f_{U_n} = f_n\).
Then \(g_n\restriction_{A_n} = f_n\) and \(g_n[X \smallsetminus V_n] = \{n\}\).
We proceed by way of contradiction.
Let \(f \in C_{\mathcal B}(X)\) be so that for all \(n\), there is a \(k \geq \max\{N,n\}\) so that \(g_k \in [f;B,2^{-n}]\).
Since \(B \not\subseteq V_k\), there is an \(x_k \in B \smallsetminus V_k\).
Thus \(|g_k(x_n) - f(x_n)| \leq 2^{-n}\), and so \(f(x_n) \geq k-1\).
Proceeding in this way, we can produce an unbounded sequence \(k_n\) and a collection of points \(x_n \in B\) so that \(f(x_n) \geq k_n - 1\).
But then \(f\) is a continuous function where \(f[B]\) is unbounded.
So \(B\) is not \(\mathbb R\)-bounded, which is a contradiction.
\end{proof}

\begin{corollary}\label{corollary:Gru<PointGamma}
Let \(X\) be a Tychonoff space and \(\mathcal A, \mathcal B \subseteq \wp(X)\).
Then
\begin{enumerate}[label=(\roman*)]
    \item \label{gruA} \(G_1(\mathscr{N}_{C_{\mathcal{A}(X)}}(\mathbf 0), \neg \Gamma_{C_{\mathcal B(X)},\mathbf 0}) \leq_{\Two} G_1(\mathscr{N}[\mathcal A], \neg \Gamma(X,\mathcal B))\)
    \item \label{gruB} \(G_1(\mathscr{N}_{C_{\mathcal{A}(X)}}(\mathbf 0), \neg \Omega_{C_{\mathcal B(X)},\mathbf 0}) \leq_{\Two} G_1(\mathscr{N}_{C_{\mathcal{A}(X)}}(\mathbf 0), \neg \Gamma_{C_{\mathcal B(X)},\mathbf 0})\)
\end{enumerate}
\end{corollary}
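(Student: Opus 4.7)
The plan is to invoke Theorem~\ref{Translation} in both parts, reusing essentially the translation maps from the proof of \ref{POA} in Corollary~\ref{corollary:PO=Gru=CD}; the only new work is to verify condition \ref{TranslationB} against the \(\Gamma\)-target in place of the \(\Lambda\)-target.

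For part \ref{gruA}, I would take
\[
\overleftarrow{T}_{\One,n}(\mathscr N(A)) = [\mathbf 0; A, 2^{-n}], \qquad \overrightarrow{T}_{\Two,n}(f, \mathscr N(A)) = f^{-1}[(-2^{-n}, 2^{-n})],
\]
exactly as in \ref{POA}. Both maps are well-defined, and condition \ref{TranslationA} is verified verbatim: if \(f \in [\mathbf 0; A, 2^{-n}]\) then \(A \subseteq f^{-1}[(-2^{-n}, 2^{-n})]\). For \ref{TranslationB}, suppose \(f_n \in [\mathbf 0; A_n, 2^{-n}]\) and \(\{f_n : n \in \omega\} \notin \Gamma_{C_{\mathcal B}(X), \mathbf 0}\), i.e., \((f_n)\) does not converge to \(\mathbf 0\) in \(C_{\mathcal B}(X)\). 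Then some basic neighborhood \([\mathbf 0; B, \varepsilon]\) of \(\mathbf 0\) fails to capture \(f_n\) for infinitely many \(n\). Intersecting this infinite index set with the cofinite set \(\{n : 2^{-n} < \varepsilon\}\) still leaves infinitely many \(n\); for each such \(n\) one can pick \(x \in B\) with \(|f_n(x)| \geq \varepsilon > 2^{-n}\), so \(B \not\subseteq f_n^{-1}[(-2^{-n}, 2^{-n})]\). Hence \(\{f_n^{-1}[(-2^{-n}, 2^{-n})] : n \in \omega\}\) fails the \(\gamma\)-cofiniteness clause at \(B\) and is not in \(\Gamma(X, \mathcal B)\), completing \ref{TranslationB}.

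Part \ref{gruB} is immediate from the containment \(\neg \Omega_{C_{\mathcal B}(X), \mathbf 0} \subseteq \neg \Gamma_{C_{\mathcal B}(X), \mathbf 0}\): any sequence \((f_n)\) with \(\mathbf 0 \notin \text{cl}\{f_n\}\) certainly does not converge to \(\mathbf 0\). Since the ``One'' side of the two games is literally the same collection \(\mathscr N_{C_{\mathcal A}(X)}(\mathbf 0)\), Two can reuse verbatim any winning strategy (full-information or Markov) from the Omega-game as a winning strategy in the Gamma-game, and One's non-winning conditions transfer in the same direction; this is just Theorem~\ref{Translation} with both translations taken to be identities. The only genuinely new ingredient in the whole argument is the index-matching performed in \ref{gruA}, and since this amounts to intersecting an infinite set with a cofinite one, I anticipate no serious obstacle.
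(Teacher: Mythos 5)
Your proposal is correct and follows essentially the same route as the paper: part \ref{gruA} reuses the translation maps from \ref{POA} of Corollary \ref{corollary:PO=Gru=CD} (which is all the paper itself says), and part \ref{gruB} is exactly the paper's observation that avoiding clustering at \(\mathbf 0\) implies avoiding convergence to \(\mathbf 0\), the One-side collections being identical. The one point you gloss---passing from infinitely many indices \(n\) with \(B \not\subseteq f_n^{-1}[(-2^{-n},2^{-n})]\) to failure of the \(\gamma\)-clause for the collection \(\{U_n : n \in \omega\}\), which implicitly treats Two's selections as an indexed family so that repetitions count---is glossed at exactly the same level in the paper's own one-line proof.
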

\begin{proof}
Part \ref{gruA} of this corollary is essentially the same as \ref{POA} of Corollary \ref{corollary:PO=Gru=CD}.

To see that \(G_1(\mathscr{N}_{C_{\mathcal{A}(X)}}(\mathbf 0), \neg \Omega_{C_{\mathcal B(X)},\mathbf 0}) \leq_{\Two} G_1(\mathscr{N}_{C_{\mathcal{A}(X)}}(\mathbf 0), \neg \Gamma_{C_{\mathcal B(X)},\mathbf 0})\), simply notice that if Two can avoid clustering around \(\mathbf 0\), then Two can certainly avoid converging to \(\mathbf 0\).
\end{proof}

\begin{definition}
    For a collection \(\mathcal A\), we say that \(\mathcal B \subseteq \mathcal A\) is a \textbf{selection basis} for \(\mathcal A\) if
    \[
        (\forall A \in \mathcal A)(\exists B \in \mathcal B)(B \subseteq A).
    \]
\end{definition}

\begin{definition}
    For collections \(\mathcal A\) and \(\mathcal R\), we say that \(\mathcal R\) is a \textbf{reflection} of \(\mathcal A\) if
    \[
        \{ \text{ran}(f) : f \in \text{choice}(\mathcal R) \}
    \]
    is a selection basis for \(\mathcal A\).
\end{definition}

\begin{theorem}\cite[Corollary 17]{ClontzDuality} \label{thm:ClontzDuality}
    If \(\mathcal R\) is a reflection of \(\mathcal A\), then \(G_1(\mathcal A, \mathcal B)\) and \(G_1(\mathcal R, \neg\mathcal B)\) are dual.
\end{theorem}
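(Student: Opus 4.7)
The plan is to construct translations of strategies between $G_1(\mathcal A, \mathcal B)$ and $G_1(\mathcal R, \neg \mathcal B)$, one for each of the four implications in the duality. The bridge is the double role played by any choice function $h \in \text{choice}(\mathcal R)$: each value $h(R) \in R$ is a legitimate Two-response to One's move $R$ in $G_1(\mathcal R, \neg \mathcal B)$, and the range $\text{ran}(h) \in \mathcal A$ is a legitimate One-move in $G_1(\mathcal A, \mathcal B)$. The selection-basis clause in the definition of reflection ensures every $A \in \mathcal A$ contains such a range $\text{ran}(h)$, so this translation is available whenever we need to replace an $\mathcal A$-move by a coherent system of $\mathcal R$-responses.

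For $\One \uparrow G_1(\mathcal A, \mathcal B) \Rightarrow \Two \uparrow G_1(\mathcal R, \neg \mathcal B)$, given a winning $\sigma$ I would have Two internally simulate a $G_1(\mathcal A, \mathcal B)$-play against $\sigma$: when $\sigma$ outputs $A_n$, select $h_n$ with $\text{ran}(h_n) \subseteq A_n$ and respond to One's $R_n$ by $x_n := h_n(R_n) \in R_n \cap A_n$. The sequence $\{x_n\}$ is a legitimate run against $\sigma$, hence $\{x_n\} \notin \mathcal B$, so Two wins. For $\Two \uparrow G_1(\mathcal R, \neg \mathcal B) \Rightarrow \One \uparrow G_1(\mathcal A, \mathcal B)$, given a winning $\tau$ I would thread a virtual $\mathcal R$-history $R_0, \ldots, R_{n-1}$ through Two's actual moves $x_k$ (choosing $R_k$ so $\tau(R_0, \ldots, R_k) = x_k$) and set $h_n(R) := \tau(R_0, \ldots, R_{n-1}, R)$; since $h_n$ is a choice function, $A_n := \text{ran}(h_n) \in \mathcal A$ is a legal One-move, and whenever $x_n \in A_n$ there exists some $R_n$ with $h_n(R_n) = x_n$ extending the thread, making the full play a $\tau$-play. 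The Markov-versus-predetermined duality in this pair falls out for free: a predetermined $\sigma$ gives an $h_n$ depending only on $n$, hence a Markov Two-strategy; a Markov $\tau$ gives $h_n(R) = \tau(R, n)$, hence a predetermined $A_n$.

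The companion equivalence $\One \uparrow G_1(\mathcal R, \neg \mathcal B) \Leftrightarrow \Two \uparrow G_1(\mathcal A, \mathcal B)$ uses the same two templates with roles swapped. The implication $\One \uparrow G_1(\mathcal R, \neg \mathcal B) \Rightarrow \Two \uparrow G_1(\mathcal A, \mathcal B)$ mirrors the first: simulate $\sigma$'s $R_n$-moves and, when One in $G_1(\mathcal A, \mathcal B)$ plays $A_n$, answer by $h_n(R_n)$ for some $h_n$ with $\text{ran}(h_n) \subseteq A_n$. The main obstacle is the remaining direction $\Two \uparrow G_1(\mathcal A, \mathcal B) \Rightarrow \One \uparrow G_1(\mathcal R, \neg \mathcal B)$: now $\tau$'s inputs live in $\mathcal A$ rather than $\mathcal R$, so the pointwise construction $h_n(R) := \tau(\cdots, R)$ used above is unavailable. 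My expected resolution, and the step requiring the full strength of the reflection hypothesis, is a branching construction --- for each potential response $y \in R_n$, pre-assign a choice function $h_n^{(y)}$ with $h_n^{(y)}(R_n) = y$ whose range is an $\mathcal A$-set for which $\tau$ outputs $y$ --- together with a careful choice of $R_n \in \mathcal R$ that supports all such extensions uniformly. Verifying existence of this $R_n$ inside $\mathcal R$ (not merely as a subset of $\bigcup\mathcal R$) is where I expect the most delicate use of the selection-basis condition.
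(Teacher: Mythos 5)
Your simulation scheme is correct for three of the four implications: \(\One \uparrow G_1(\mathcal A, \mathcal B) \Rightarrow \Two \uparrow G_1(\mathcal R, \neg\mathcal B)\), \(\Two \uparrow G_1(\mathcal R, \neg\mathcal B) \Rightarrow \One \uparrow G_1(\mathcal A, \mathcal B)\), and \(\One \uparrow G_1(\mathcal R, \neg\mathcal B) \Rightarrow \Two \uparrow G_1(\mathcal A, \mathcal B)\), together with their Markov/predetermined analogues; you correctly use both halves of the definition (every \(A \in \mathcal A\) contains the range of a choice function, and every such range is itself a member of \(\mathcal A\)). But the fourth implication, \(\Two \uparrow G_1(\mathcal A, \mathcal B) \Rightarrow \One \uparrow G_1(\mathcal R, \neg\mathcal B)\) (and its Markov version, needed for Markov duality), is the actual content of the theorem — it is the generalization of the hard half of Galvin's point-open/Rothberger duality — and there you offer only a plan. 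You never establish the existence of the move \(R_n \in \mathcal R\) that your ``branching construction'' requires; you explicitly defer exactly that verification. As written, this is a genuine gap, and note that the paper itself supplies no argument to lean on here: it quotes the result from Clontz's paper.

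The missing step is in fact a short contradiction argument, and isolating the right claim makes your ``uniform support'' worry disappear. Claim: if \(A_0, y_0, \ldots, A_{n-1}, y_{n-1}\) is a partial play of \(G_1(\mathcal A, \mathcal B)\) consistent with Two's winning strategy \(\tau\), then there is \(R \in \mathcal R\) so that \emph{every} \(y \in R\) equals \(\tau(A_0, \ldots, A_{n-1}, A)\) for some \(A \in \mathcal A\). Otherwise, for each \(R \in \mathcal R\) choose \(y_R \in R\) which is not such a value; then \(f(R) = y_R\) is a choice function on \(\mathcal R\), so \(\text{ran}(f) \in \mathcal A\) by the reflection hypothesis, and \(\tau(A_0, \ldots, A_{n-1}, \text{ran}(f)) \in \text{ran}(f)\) must be \(y_R\) for some \(R\) — contradicting the choice of \(y_R\). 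Granting the claim, One plays such an \(R_n\); whatever \(y_n \in R_n\) Two selects, One records a witness \(A_n\) with \(\tau(A_0,\ldots,A_n) = y_n\) and continues, so the final selection \(\{y_n : n \in \omega\}\) is \(\tau\)'s output against the run \(A_0, A_1, \ldots\), hence lies in \(\mathcal B\) and not in \(\neg\mathcal B\). Applying the same claim to a Markov strategy \(\tau(\cdot, n)\) produces the predetermined sequence \((R_n)\) for the limited-information half. Incidentally, your proposed requirement on \(h_n^{(y)}\) — a choice function sending \(R_n\) to \(y\) whose range is a set on which \(\tau\) outputs \(y\) — is stronger than what is needed: all the simulation requires is \emph{some} \(A \in \mathcal A\) on which \(\tau\) outputs \(y\), and the existence of \(R_n\) comes from the contradiction above, not from any uniform pre-assignment of choice functions.
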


\begin{corollary}\cite[Corollary 21]{CaruvanaHolshouser}
    For any collection \(\mathcal A\) of subsets of a space \(X\) and any collection \(\mathcal B\), the games \(G_1(\mathcal O(X,\mathcal A),\mathcal B)\) and \(G_1(\mathscr N[\mathcal A] , \neg\mathcal B)\) are dual.
\end{corollary}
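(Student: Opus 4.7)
The plan is to derive this corollary directly from Clontz's general duality theorem, Theorem \ref{thm:ClontzDuality}. Concretely, the only content I need to supply is the verification that \(\mathscr N[\mathcal A]\) is a reflection of \(\mathcal O(X,\mathcal A)\), after which duality is automatic for any choice of \(\mathcal B\).

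First, I would unpack the reflection definition. A choice function \(f \in \text{choice}(\mathscr N[\mathcal A])\) picks, for each \(A \in \mathcal A\), an open set \(U_A := f(\mathscr N(A))\) containing \(A\). Its range is therefore a collection \(\{U_A : A \in \mathcal A\}\) of open sets such that every \(A \in \mathcal A\) is contained in at least one of them; modulo the trivial \(X\)-exclusion convention, this is precisely the defining property of an element of \(\mathcal O(X,\mathcal A)\).

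Next, I would verify the selection-basis condition. Given any \(\mathscr U \in \mathcal O(X,\mathcal A)\), the clause defining \(\mathcal O(X,\mathcal A)\) provides, for each \(A \in \mathcal A\), some \(U_A \in \mathscr U\) with \(A \subseteq U_A\). Using the axiom of choice, define \(f(\mathscr N(A)) = U_A\); then \(f \in \text{choice}(\mathscr N[\mathcal A])\) and \(\text{ran}(f) \subseteq \mathscr U\). Hence the collection \(\{\text{ran}(f) : f \in \text{choice}(\mathscr N[\mathcal A])\}\) refines \(\mathcal O(X,\mathcal A)\) in the required sense, so \(\mathscr N[\mathcal A]\) is indeed a reflection of \(\mathcal O(X,\mathcal A)\).

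With the reflection hypothesis in place, Theorem \ref{thm:ClontzDuality} immediately yields the duality of \(G_1(\mathcal O(X,\mathcal A), \mathcal B)\) and \(G_1(\mathscr N[\mathcal A], \neg \mathcal B)\) for arbitrary \(\mathcal B\). There is no real obstacle here; the only subtlety worth a sentence is the convention from the remark following the definition of \(\mathscr N[\mathcal A]\), identifying One's move \(A \in \mathcal A\) with \(\mathscr N(A)\) and Two's move with an open \(U \supseteq A\), so that the game-theoretic picture lines up with the reflection hypothesis as phrased in Clontz's framework.
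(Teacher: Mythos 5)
Your proposal matches the paper's treatment: the corollary is quoted from \cite{CaruvanaHolshouser} and is obtained exactly as you describe, by checking that \(\mathscr N[\mathcal A]\) is a reflection of \(\mathcal O(X,\mathcal A)\) and invoking Theorem \ref{thm:ClontzDuality}, the same pattern the paper itself spells out for \(\Omega_{X,x}\) versus \(\mathscr N(x)\). The only caveat---that ranges of choice functions lie in \(\mathcal O(X,\mathcal A)\) only up to the conventions that \(X\) is never selected and that the sets in \(\mathcal A\) give rise to covers---is glossed over in the paper in the same way, so your argument is essentially the intended one.
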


\begin{proposition}
    Suppose \(X\) is a topological space, \(x \in X\), and \(\mathcal B \subseteq \wp(X)\).
    Then \(G_1(\Omega_{X,x}, \mathcal B)\) and \(G_1(\mathscr N(x), \neg \mathcal B)\) are dual.
\end{proposition}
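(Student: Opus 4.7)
The plan is to apply Theorem \ref{thm:ClontzDuality} directly by exhibiting \(\mathscr N(x)\) as a reflection of \(\Omega_{X,x}\). Setting \(\mathcal A = \Omega_{X,x}\) and \(\mathcal R = \mathscr N(x)\), the task reduces to verifying that the family
\[
\mathcal S := \{ \text{ran}(f) : f \in \text{choice}(\mathscr N(x)) \}
\]
is a selection basis for \(\Omega_{X,x}\): that \(\mathcal S \subseteq \Omega_{X,x}\), and that every \(A \in \Omega_{X,x}\) contains some element of \(\mathcal S\).

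For the containment \(\mathcal S \subseteq \Omega_{X,x}\), I would observe that if \(f \in \text{choice}(\mathscr N(x))\) then every \(U \in \mathscr N(x)\) contains the point \(f(U) \in \text{ran}(f)\), so \(U \cap \text{ran}(f) \neq \emptyset\) for every open neighborhood \(U\) of \(x\). Hence \(x \in \text{cl}_X(\text{ran}(f))\), i.e., \(\text{ran}(f) \in \Omega_{X,x}\). For the selection-basis condition, let \(A \in \Omega_{X,x}\); then \(x \in \text{cl}_X(A)\), which means \(U \cap A \neq \emptyset\) for every \(U \in \mathscr N(x)\). Using the Axiom of Choice I would pick \(f(U) \in U \cap A\) for each \(U \in \mathscr N(x)\), producing a choice function whose range lies entirely inside \(A\).

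With these two checks in hand, Theorem \ref{thm:ClontzDuality} yields that \(G_1(\Omega_{X,x}, \mathcal B)\) and \(G_1(\mathscr N(x), \neg \mathcal B)\) are dual. There is no real obstacle here; the proposition is essentially a direct instance of Clontz's general reflection duality, and the only content is recognizing that the defining condition \(x \in \text{cl}_X(A)\) for membership in \(\Omega_{X,x}\) is precisely what allows neighborhood choice functions to land inside \(A\).
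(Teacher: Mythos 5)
Your proposal is correct and is essentially the paper's own argument: both verify that \(\{\operatorname{ran}(C) : C \in \operatorname{choice}(\mathscr N(x))\}\) lies in \(\Omega_{X,x}\) and forms a selection basis for it (by choosing, for each \(A \in \Omega_{X,x}\), a point of \(U \cap A\) for every \(U \in \mathscr N(x)\)), and then invoke Theorem \ref{thm:ClontzDuality}. The only difference is that you spell out the justification the paper dismisses with ``clearly,'' namely that \(x \in \text{cl}_X(\operatorname{ran}(C))\) for any choice function \(C\).
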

\begin{proof}
    It suffices to show that
    \[
    \{\mbox{ran}(C) : C \in \mbox{choice}(\mathscr N(x))\} \subseteq \Omega_{X,x}
    \]
    and is a selection basis for \(\Omega_{X,x}\).
    Clearly, each \(\mbox{ran}(C) \in \Omega_{X,x}\).
    Now let \(F \in \Omega_{X,x}\).
    Then for each \(U \in \mathscr N(x)\), there is an \(x_U \in F \cap U\).
    Define a choice function \(C\) for \(\mathscr N(x)\) by \(C(U) = x_U\).
    Notice that
    \[
    \mbox{ran}(C) = \{x_U : U \in \mathscr N(x)\} \subseteq F.
    \]
\end{proof}

\begin{corollary}\label{corollary:CFTDual}
    \(G_1(\Omega_{C_{\mathcal A}(X),\mathbf 0}, \Omega_{C_{\mathcal B}(X),\mathbf 0})\)
    and \(G_1(\mathscr N_{C_{\mathcal A}(X)}(\mathbf 0), \neg \Omega_{C_{\mathcal B}(X),\mathbf 0})\) are dual.
\end{corollary}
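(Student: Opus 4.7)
The plan is to invoke the preceding proposition as a black box, with a single substitution. That proposition asserts that for any topological space $Y$, point $y \in Y$, and collection $\mathcal C \subseteq \wp(Y)$, the games $G_1(\Omega_{Y,y}, \mathcal C)$ and $G_1(\mathscr N(y), \neg\mathcal C)$ are dual. I will specialize with $Y := C_{\mathcal A}(X)$, $y := \mathbf 0$, and $\mathcal C := \Omega_{C_{\mathcal B}(X),\mathbf 0}$, and read off the conclusion.

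The one point worth checking before quoting the proposition is that the substitution is legitimate, i.e.\ that $\Omega_{C_{\mathcal B}(X),\mathbf 0}$ can sensibly be regarded as a subset of $\wp(C_{\mathcal A}(X))$. This is automatic: $C_{\mathcal A}(X)$ and $C_{\mathcal B}(X)$ share the same underlying set (the continuous real-valued functions on $X$), so $\Omega_{C_{\mathcal B}(X),\mathbf 0} \subseteq \wp(C_{\mathcal B}(X)) = \wp(C_{\mathcal A}(X))$ as families of sets. The winning condition in the corollary's games is unchanged under this reinterpretation, since membership in $\Omega_{C_{\mathcal B}(X),\mathbf 0}$ is defined intrinsically using the topology of $C_{\mathcal B}(X)$ regardless of which ambient topology we view the family inside.

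With this noted, the preceding proposition immediately yields
\[
\One \uparrow G_1(\Omega_{C_{\mathcal A}(X),\mathbf 0}, \Omega_{C_{\mathcal B}(X),\mathbf 0}) \iff \Two \uparrow G_1(\mathscr N_{C_{\mathcal A}(X)}(\mathbf 0), \neg \Omega_{C_{\mathcal B}(X),\mathbf 0})
\]
and the analogous equivalence with the roles of One and Two reversed, along with the Markov/predetermined versions, which together constitute duality.

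There is no real obstacle to overcome here; the corollary is a direct specialization. The work of identifying $\mathscr N(\mathbf 0)$-choice function ranges with a selection basis for $\Omega_{C_{\mathcal A}(X),\mathbf 0}$ has already been done in the proof of the preceding proposition, and Theorem \ref{thm:ClontzDuality} supplies the duality machinery upstream.
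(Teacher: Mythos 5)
Your proposal is correct and matches the paper's intent exactly: the corollary is stated as an immediate specialization of the preceding proposition with $Y = C_{\mathcal A}(X)$, $y = \mathbf 0$, and $\mathcal C = \Omega_{C_{\mathcal B}(X),\mathbf 0}$, and your observation that $C_{\mathcal A}(X)$ and $C_{\mathcal B}(X)$ share the same underlying set (so the substitution is legitimate) is the only point needing verification.
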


\begin{proposition}\label{prop:CD->CDFT}
    \(G_1(\mathcal D_{C_{\mathcal A}(X)}, \Omega_{C_{\mathcal B}(X),\mathbf 0})\) and \(G_1(\mathscr{T}_{C_{\mathcal A}(X)}, \neg \Omega_{C_{\mathcal B}(X),\mathbf 0})\) are dual.
    Therefore whenever \(\Two \uparrow G_1(\mathscr{T}_{C_{\mathcal A}(X)}, \mbox{CD}_{C_{\mathcal B}(X)})\), we have that \(\One \uparrow G_1(\mathcal D_{C_{\mathcal A}(X)}, \Omega_{C_{\mathcal B}(X),\mathbf 0})\). This is also true for going from Markov strategies to pre-determined strategies.
\end{proposition}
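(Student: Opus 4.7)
The plan is to derive the duality from Theorem \ref{thm:ClontzDuality} by verifying that $\mathscr{T}_{C_{\mathcal A}(X)}$ is a reflection of $\mathcal D_{C_{\mathcal A}(X)}$. This verification is immediate: for any $C \in \text{choice}(\mathscr{T}_{C_{\mathcal A}(X)})$, $\text{ran}(C)$ meets every non-empty open subset of $C_{\mathcal A}(X)$ (since $C(U) \in U$) and so lies in $\mathcal D_{C_{\mathcal A}(X)}$, while conversely for any dense $D$ the choice function $U \mapsto $ some point of $D \cap U$ has range contained in $D$, witnessing the selection-basis property. Clontz's theorem then delivers both the strategic and the Markov duality.

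For the ``therefore'' clause I would build I's winning (respectively pre-determined) strategy in $G_1(\mathcal D_{C_{\mathcal A}(X)}, \Omega_{C_{\mathcal B}(X),\mathbf 0})$ directly from II's winning (respectively Markov) strategy $\tau$ in the closed-discrete game, running essentially the Clontz lifting but filtering out the value $\mathbf 0$. In the full-information case, after II's prior plays $y_0, \ldots, y_{n-1}$ in the blade game, I bookkeeps a sequence $V_i \in \mathscr{T}_{C_{\mathcal A}(X)}$ satisfying $\tau(V_0, \ldots, V_i) = y_i$ for each $i < n$, then plays
\[
D_n := \left\{ \tau(V_0, \ldots, V_{n-1}, V) : V \in \mathscr{T}_{C_{\mathcal A}(X)} \text{ and } \tau(V_0, \ldots, V_{n-1}, V) \neq \mathbf 0 \right\}.
\]
Density of $D_n$ is the crucial check: for any non-empty open $W \subseteq C_{\mathcal A}(X)$, the set $W \setminus \{\mathbf 0\}$ is a non-empty element of $\mathscr{T}_{C_{\mathcal A}(X)}$ (since $\mathbf 0$ is not isolated in the function space under the standing Tychonoff assumptions), and taking $V = W \setminus \{\mathbf 0\}$ places $\tau(V_0, \ldots, V_{n-1}, V)$ into $D_n \cap W$. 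When II responds with some $y_n \in D_n$, the bookkeeping is extended by picking a $V_n$ that realises $y_n$; the sequence $\{y_n : n \in \omega\}$ is then a genuine run of $\tau$ in the CD game and hence closed discrete in $C_{\mathcal B}(X)$. Because no $y_n$ equals $\mathbf 0$, we get $\mathbf 0 \notin \text{cl}_{C_{\mathcal B}(X)}(\{y_n\}) = \{y_n\}$, so $\{y_n\} \notin \Omega_{C_{\mathcal B}(X),\mathbf 0}$ and I wins.

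The Markov-to-pre-determined case is analogous: setting $\sigma(n) := \{\tau(V, n) : V \in \mathscr{T}_{C_{\mathcal A}(X)},\ \tau(V, n) \neq \mathbf 0\}$ and running the same argument yields a pre-determined winning strategy for I. The main conceptual point I would want to highlight is the role of the ``$\neq \mathbf 0$'' filter: without it II's CD-winning strategy could be forced by I to play $\mathbf 0$ (most starkly when I plays the one-point subset $\{\mathbf 0\} \in \mathscr{T}_{C_{\mathcal A}(X)}$), and a closed discrete set containing $\mathbf 0$ is still a blade of $\mathbf 0$; the filter sidesteps this by exploiting the one degree of freedom that the dense-set game affords---namely that I is entitled to restrict attention to open subsets avoiding $\mathbf 0$.
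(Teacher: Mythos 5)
Your duality half is precisely the paper's argument: you verify that the ranges of choice functions on \(\mathscr T_{C_{\mathcal A}(X)}\) land in \(\mathcal D_{C_{\mathcal A}(X)}\) and form a selection basis for it, then invoke Clontz's reflection theorem (Theorem \ref{thm:ClontzDuality}), which yields the strategic and Markov dualities simultaneously; no difference there. For the ``therefore'' clause you take a genuinely different route. The paper stays on Two's side: it observes (as in item \ref{POC} of Corollary \ref{corollary:PO=Gru=CD}) that a closed discrete set does not cluster at \(\mathbf 0\), so \(\Two \uparrow G_1(\mathscr{T}_{C_{\mathcal A}(X)}, \text{CD}_{C_{\mathcal B}(X)})\) gives \(\Two \uparrow G_1(\mathscr{T}_{C_{\mathcal A}(X)}, \neg\Omega_{C_{\mathcal B}(X),\mathbf 0})\), and then simply applies the duality just established to get \(\One \uparrow G_1(\mathcal D_{C_{\mathcal A}(X)}, \Omega_{C_{\mathcal B}(X),\mathbf 0})\), with the same argument at the Markov/pre-determined level. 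You instead re-run the Clontz lifting by hand, having One play the images of Two's CD-strategy over all possible open moves with the value \(\mathbf 0\) filtered out. Your route duplicates work the duality already does, but it buys a real point of care: under the paper's literal definition of \(\Omega_{X,x}\) (namely \(x \in \cl\) of the set), a closed discrete set that happens to contain \(\mathbf 0\) is still a blade of \(\mathbf 0\), so the paper's one-line passage from the CD game to the \(\neg\Omega\) game is slightly glossed, and your ``\(\neq \mathbf 0\)'' filter addresses exactly that case. The one caveat on your side is the density check: \(W\setminus\{\mathbf 0\}\) is open only if \(\{\mathbf 0\}\) is closed in \(C_{\mathcal A}(X)\), which holds in the intended settings (e.g.\ when \(\bigcup\mathcal A\) is dense in \(X\), in particular for \(C_p(X)\) and \(C_k(X)\)) but is not automatic for arbitrary \(\mathcal A \subseteq \wp(X)\); this is informality of the same order as the paper's own observation, not a gap that changes the verdict.
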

\begin{proof}
    We can use reflection to show that \(G_1(\mathcal D_{C_{\mathcal A}(X)}, \Omega_{C_{\mathcal B}(X),\mathbf 0})\)
    and \(G_1(\mathscr{T}_{C_{\mathcal A}(X)}, \neg\Omega_{C_{\mathcal B}(X),\mathbf 0})\) are dual and that \(G_1(\mathcal D_{C_{\mathcal A}(X)}, \neg \mbox{CD}_{C_{\mathcal B}(X)})\) and \(G_1(\mathscr{T}_{C_{\mathcal A}(X)}, \mbox{CD}_{C_{\mathcal B}(X)})\) are dual as well.
    First check that
    \[
    \{\mbox{ran}(C) : C \in \mbox{choice}(\mathscr{T}_{C_{\mathcal A}(X)})\} \subseteq \mathcal D_{C_{\mathcal A}(X)}
    \]
    and is a selection basis for \(\mathcal D_{C_{\mathcal A}(X)}\).
    Clearly, each \(\mbox{ran}(C) \in \mathcal D_{C_{\mathcal A}(X)}\).
    Now let \(D \in \mathcal D_{C_{\mathcal A}(X)}\).
    Then for each \(U \in \mathscr{T}_{C_{\mathcal A}(X)}\), there is an \(f_U \in D \cap U\).
    Define a choice function \(C\) for \(\mathscr{T}_{C_{\mathcal A}(X)}\) by \(C(U) = f_U\).
    Notice that
    \[
    \mbox{ran}(C) = \{f_U : U \in \mathscr{T}_{C_{\mathcal A}(X)}\} \subseteq D.
    \]
    Thus \(G_1(\mathcal D_{C_{\mathcal A}(X)}, \mathcal C)\)
    and \(G_1(\mathscr{T}_{C_{\mathcal A}(X)}, \neg \mathcal C)\) are dual for any \(\mathcal C \subseteq \wp(C_{\mathcal B}(X))\).

    Therefore,
    \[
    \Two \uparrow G_1(\mathscr{T}_{C_{\mathcal A}(X)}, \mbox{CD}_{C_{\mathcal B}(X)})
    \implies \Two \uparrow G_1(\mathscr{T}_{C_{\mathcal A}(X)}, \neg\Omega_{C_{\mathcal B}(X),\mathbf 0})
    \iff \One \uparrow G_1(\mathcal D_{C_{\mathcal A}(X)}, \Omega_{C_{\mathcal B}(X),\mathbf 0}).
    \]
    The analogous results hold for Markov and pre-determined strategies.
\end{proof}

\section{Covering Properties}

\begin{lemma}\label{lemma:Gru=Cof}
    Suppose \(X\) is a Tychonoff space. Then the following are equivalent:
    \begin{enumerate}[label=(\roman*)]
        \item \label{OnePre_Gruenhage}
        \(\One \underset{\text{pre}}{\uparrow} G_1( \mathscr N_{C_{\mathcal A}(X)}(\mathbf 0) , \neg \Gamma_{C_{\mathcal B}(X), \mathbf 0})\),
        \item \label{OnePre_GruenhageCluster}
        \(\One \underset{\text{pre}}{\uparrow} G_1( \mathscr N_{C_{\mathcal A}(X)}(\mathbf 0) , \neg \Omega_{C_{\mathcal B}(X), \mathbf 0})\),
        \item \label{GruenhageCofinal}
        \(\cof(\mathscr N_{C_{\mathcal A}(X)}(\mathbf 0); \mathscr N_{C_{\mathcal{B}}(X)}(\mathbf 0), \supseteq) = \omega\).
    \end{enumerate}
\end{lemma}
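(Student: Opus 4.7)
The plan is to prove the equivalence by the cycle \ref{OnePre_Gruenhage} $\Rightarrow$ \ref{OnePre_GruenhageCluster} $\Rightarrow$ \ref{GruenhageCofinal} $\Rightarrow$ \ref{OnePre_Gruenhage}. The implication \ref{OnePre_Gruenhage} $\Rightarrow$ \ref{OnePre_GruenhageCluster} is essentially the content of Corollary \ref{corollary:Gru<PointGamma}\ref{gruB} specialized to pre-determined strategies: a sequence $\langle U_n \rangle$ that forces every selection to converge to $\mathbf 0$ in $C_{\mathcal B}(X)$ in particular forces $\mathbf 0$ to be a cluster point, so the same pre-determined script wins in the $\Omega$ game.

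For \ref{OnePre_GruenhageCluster} $\Rightarrow$ \ref{GruenhageCofinal}, I would take a winning pre-determined strategy $\langle U_n : n \in \omega \rangle \subseteq \mathscr N_{C_{\mathcal A}(X)}(\mathbf 0)$ for the $\Omega$ game and show that this very sequence witnesses cofinality. The argument is a direct contradiction: if some $V \in \mathscr N_{C_{\mathcal B}(X)}(\mathbf 0)$ satisfies $U_n \not\subseteq V$ for every $n$, then for each $n$ we can pick $f_n \in U_n \setminus V$ (and each such $f_n \neq \mathbf 0$ since $\mathbf 0 \in V$). The resulting selection misses the $C_{\mathcal B}(X)$-neighborhood $V$ entirely, so $\mathbf 0 \notin \text{cl}_{C_{\mathcal B}(X)} \{f_n : n \in \omega\}$, contradicting that $\langle U_n \rangle$ was a winning pre-determined strategy in the clustering game.

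For \ref{GruenhageCofinal} $\Rightarrow$ \ref{OnePre_Gruenhage}, let $\{U_n : n \in \omega\}$ be a countable family cofinal in $(\mathscr N_{C_{\mathcal A}(X)}(\mathbf 0); \mathscr N_{C_{\mathcal B}(X)}(\mathbf 0), \supseteq)$. Since $\mathscr N_{C_{\mathcal A}(X)}(\mathbf 0)$ is closed under finite intersections, set
\[
W_n = \bigcap_{k \leq n} U_k \in \mathscr N_{C_{\mathcal A}(X)}(\mathbf 0).
\]
Define the pre-determined strategy $\sigma(n) = W_n$. For any play $f_n \in W_n$ and any $V \in \mathscr N_{C_{\mathcal B}(X)}(\mathbf 0)$, cofinality yields $k$ with $U_k \subseteq V$, and then $f_n \in W_n \subseteq U_k \subseteq V$ for every $n \geq k$. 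Hence $f_n \to \mathbf 0$ in $C_{\mathcal B}(X)$, so $\{f_n : n \in \omega\} \in \Gamma_{C_{\mathcal B}(X), \mathbf 0}$, showing $\sigma$ is winning.

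There isn't really a main obstacle: once one unwinds that a pre-determined strategy \emph{is} just a countable sequence of $\mathcal A$-neighborhoods of $\mathbf 0$, each implication reduces to an elementary observation. The only subtlety is ensuring in \ref{GruenhageCofinal} $\Rightarrow$ \ref{OnePre_Gruenhage} that the strategy we build produces a sequence eventually inside every $\mathcal B$-neighborhood of $\mathbf 0$, which is exactly why the finite-intersection trick $W_n = \bigcap_{k \leq n} U_k$ is needed to pass from a merely cofinal family to a tail-cofinal one.
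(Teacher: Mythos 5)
Your proof is correct and follows essentially the same route as the paper: the \(\Gamma\)-to-\(\Omega\) implication is immediate, the clustering-to-cofinality step is the same contradiction via \(f_n \in U_n \setminus V\), and the last step is the paper's ``assume the \(U_n\) are descending'' made explicit by your finite-intersection construction \(W_n = \bigcap_{k \leq n} U_k\). The only cosmetic difference is that the paper first replaces One's moves by basic neighborhoods \([\mathbf 0; A_n, \varepsilon_n]\), which your argument avoids by working with arbitrary neighborhoods directly.
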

\begin{proof}
    Clearly, \ref{OnePre_Gruenhage} implies \ref{OnePre_GruenhageCluster}.

    Suppose \(\One \underset{\text{pre}}{\uparrow} G_1( \mathscr N_{C_{\mathcal A}(X)}(\mathbf 0) , \neg \Omega_{C_{\mathcal B}(X), \mathbf 0})\).
    Then we get a sequence of neighborhoods \([\mathbf 0; A_n, \varepsilon_n]\).
    Now, let \(B \in \mathcal B\), \(\varepsilon > 0\), and consider \([\mathbf 0; B, \varepsilon]\).
    Suppose \([\mathbf 0; A_n, \varepsilon_n] \not \subseteq [\mathbf 0; B, \varepsilon]\) for any \(n\).
    Then we have functions \(f_n \in [\mathbf 0; A_n, \varepsilon_n] \setminus [\mathbf 0; B, \varepsilon]\).
    Consider the play \([\mathbf 0; A_0, \varepsilon_0], f_0, \cdots\) of the game \(G_1(\mathscr{T}_{C_{\mathcal A}(X)}, \neg \Omega_{C_{\mathcal B}(X), \mathbf 0})\) according to the winning strategy.
    Because none of the \(f_n\) are in \([\mathbf 0; B, \varepsilon]\), the \(f_n\) fail to accumulate to \(\mathbf 0\) in \(C_{\mathcal B}(X)\).
    This is a contradiction.
    So \ref{OnePre_GruenhageCluster} implies \ref{GruenhageCofinal}.

    Now let \(U_n\) be a sequence of \(C_{\mathcal A}(X)\) neighborhoods of \(\mathbf 0\) which is cofinal in the \(C_{\mathcal B}(X)\) neighborhoods.
    We can assume without loss of generality that the \(U_n\) are descending.
    Define a strategy \(\sigma\) for player One in \(G_1( \mathscr N_{C_{\mathcal A}(X)}(\mathbf 0) , \neg \Gamma_{C_{\mathcal B}(X), \mathbf 0})\) by \(\sigma(n) = U_n\).
    Suppose that \(f_n \in U_n\) for all \(n\).
    Let \([\mathbf 0; B, \varepsilon]\) be an arbitrary \(C_{\mathcal B}(X)\)-nhood of \(\mathbf 0\).
    Then there is an \(N\) so that for all \(n \geq N\), \(U_n \subseteq [\mathbf 0; B, \varepsilon]\).
    Thus for all \(n \geq N\), \(f_n \in [\mathbf 0; B, \varepsilon]\).
    So \(f_n \to \mathbf 0\) in \(C_{\mathcal B}(X)\).
    Therefore \ref{GruenhageCofinal} implies \ref{OnePre_Gruenhage}.
\end{proof}

\begin{comment}
    \begin{lemma}\label{lemma:Cof=CD}
        Suppose \(X\) is a Tychonoff space. Then
        \begin{enumerate}[label=(\roman*)]
            \item \label{cof_Space}
            \(\cof(\mathcal A \times \omega; \mathcal B \times \omega, \subseteq) = \omega\) implies
            \item \label{cof_Function}
            \(\cof(\mathscr N_{C_{\mathcal A}(X)}(\mathbf 0); \mathscr N_{C_{\mathcal{B}}(X)}(\mathbf 0), \supseteq) = \omega\) implies
            \item \label{cof_PreOne}
            \(\One \underset{\text{pre}}{\uparrow} G_1(\mathscr{T}_{C_{\mathcal A}(X)}, \text{CD}_{C_{\mathcal B}(X)})\)
        \end{enumerate}
        If \(\mathcal B\) consists of bounded sets and \(\mathcal A\) consists of closed sets, then \ref{cof_Space} - \ref{cof_PreOne} are equivalent.
    \end{lemma}

    \begin{proof}
        \ref{cof_Space} \(\implies\) \ref{cof_Function} follows immediately from Lemma \ref{lemma:CofinalityBetweenGroundAndFunctions}.

        \ref{cof_Function} \(\implies\) \ref{cof_PreOne} follows from Lemma \ref{lemma:Gru=Cof}.
    \end{proof}
\end{comment}

The following generalizes V.416 from \cite[p. 460]{TkachukFE}.
Moreover, if we replace \(\mathcal A\) with the space of singletons \(X\), we obtain Theorem 1 of Gerlits and Nagy, \cite{GerlitsNagy}.

\begin{lemma} \label{lemma:PreDeterminedClosed}
     Assume \(\mathcal A, \mathcal B \subseteq \wp(X)\).
     Then \(\One \underset{\text{pre}}{\uparrow} G_1(\mathscr N[\mathcal A], \neg\mathcal O(X, \mathcal B))\) if and only if \(\cof(\mathcal A; \mathcal B, \subseteq) \leq \omega\).
\end{lemma}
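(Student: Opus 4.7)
The plan is to set up a direct dictionary between pre-determined strategies for One in $G_1(\mathscr N[\mathcal A],\neg\mathcal O(X,\mathcal B))$ and countable subfamilies of $\mathcal A$: a pre-determined strategy $\sigma$ is nothing more than a sequence $\{A_n : n \in \omega\}\subseteq\mathcal A$ with $\sigma(n)=\mathscr N(A_n)$, where we use the identification noted for $\mathscr N[\mathcal A]$. Under this dictionary I will show that $\sigma$ is winning for One if and only if $\{A_n\}$ is $\subseteq$-cofinal for $\mathcal B$.

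For the $(\Leftarrow)$ direction, take $\{A_n\}$ witnessing $\cof(\mathcal A;\mathcal B,\subseteq)\leq\omega$ and set $\sigma(n):=\mathscr N(A_n)$. For any legal response $U_n\supseteq A_n$ by Two, and any $B\in\mathcal B$, pick $n$ with $B\subseteq A_n\subseteq U_n$; hence every element of $\mathcal B$ is captured by some $U_n$, which places $\{U_n : n \in \omega\}$ inside $\mathcal O(X,\mathcal B)$, so $\sigma$ wins.

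For the $(\Rightarrow)$ direction I argue contrapositively. Suppose $\cof(\mathcal A;\mathcal B,\subseteq) > \omega$ and let $\sigma$ correspond to $\{A_n\}$. By the failure of countable cofinality, choose $B\in\mathcal B$ with $B\not\subseteq A_n$ for every $n$, and select witnesses $x_n \in B\smallsetminus A_n$. Let Two play $U_n := X\smallsetminus\{x_n\}$, an open set by the $T_1$ separation present in the intended settings of the paper. Then $A_n\subseteq U_n$ makes $U_n$ a legal response, while $x_n\in B\smallsetminus U_n$ shows no $U_n$ contains $B$; hence $\{U_n\}\notin\mathcal O(X,\mathcal B)$ and Two defeats $\sigma$. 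Since $\sigma$ was arbitrary, $\One\underset{\text{pre}}{\not\uparrow} G_1(\mathscr N[\mathcal A],\neg\mathcal O(X,\mathcal B))$.

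The main subtlety lies in $(\Leftarrow)$, specifically in verifying that $\{U_n\}$ is a genuine open cover of $X$ with $X\notin\{U_n\}$: this is handled by the standard convention of the point-open-style games (Two's plays being proper open sets) together with the fact that in the natural applications $\bigcup\mathcal B = X$, in which case $\subseteq$-cofinality of $\{A_n\}$ in $\mathcal B$ automatically forces $\bigcup_n A_n = X$ and hence $\bigcup_n U_n = X$. The $(\Rightarrow)$ direction is entirely robust and leans only on $T_1$.
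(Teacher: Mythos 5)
Your proof is correct and takes essentially the same route as the paper's: the cofinal family is exactly the range of the predetermined strategy, and the converse is handled by Two answering with the sets \(X \setminus \{x_n\}\) for witnesses \(x_n \in B \setminus A_n\) (the paper argues this direction by contradiction rather than contrapositive, and it too silently relies on the \(T_1\)/proper-open-cover conventions you explicitly flag).
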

\begin{proof}
    Suppose \(\One \underset{\text{pre}}{\uparrow} G_1(\mathscr N[\mathcal A], \neg \mathcal O(X, \mathcal B))\).
    Let \(\sigma\) be an example of a pre-determined strategy for One in this game.
    Say \(\mbox{ran}(\sigma) = \{\mathscr N(A_n) : n \in \omega\}\).
    We claim that \(\{A_n : n \in \omega\}\) is cofinal for \(\mathcal B\).
    Towards a contradiction suppose that there were an \(B \in \mathcal B\) so that \(B \not \subseteq A_n\) for all \(n\).
    Then for each \(n\), we can choose \(x_n \in B \setminus A_n\).
    Then the sequence \(\mathscr N(A_0), X \setminus \{x_0\}, \cdots\) would be a play of \(G_1(\mathscr N[\mathcal A], \neg\mathcal O(X, \mathcal B))\).
    Since \(\sigma\) is winning, \(B \subseteq X \setminus \{x_n\}\) for some \(n\).
    But then \(x_n \in X \setminus \{x_n\}\), a contradiction.
    Therefore \(\{A_n : n \in \omega\}\) is cofinal for \(\mathcal B\) and \(\cof(\mathcal A; \mathcal B, \subseteq) \leq \omega\).

    Suppose \(\cof(\mathcal A; \mathcal B, \subseteq) = \omega\).
    Let \(\{A_n : n \in \omega\}\) witness this.
    Define a strategy \(\sigma\) by \(\sigma(n) = \mathscr N(A_n)\).
    Now suppose \(\sigma(0), U_0, \cdots\) is a play of \(G_1(\mathscr N[\mathcal A], \neg \mathcal O(X, \mathcal B))\) according to \(\sigma\).
    Let \(B \in \mathcal B\).
    Then there is an \(n\) so that \(B \subseteq A_n \subseteq U_n\).
    Thus \(\{U_n : n \in \omega\} \in \mathcal O(X, \mathcal B)\) and \(\sigma\) is winning.
\end{proof}

The following generalizes a result of Telg{\'{a}}rsky \cite{Telgarsky} and extends Theorem 27 of \cite{CaruvanaHolshouser}.

\begin{lemma} \label{lemma:CoveringPropertyPre}
     Assume \(\mathcal A, \mathcal B \subseteq \wp(X)\), and \(\mathcal A\) is a collection of \(G_\delta\) sets.
     Then the following are equivalent:
     \begin{enumerate}[label=(\roman*)]
         \item \(\One \uparrow G_1(\mathscr N[\mathcal A], \neg\mathcal O(X, \mathcal B))\)
         \item \(\cof(\mathcal A; \mathcal B, \subseteq) \leq \omega\)
         \item \(\One \underset{\text{pre}}{\uparrow} G_1(\mathscr N[\mathcal A], \neg\mathcal O(X, \mathcal B))\)
    \end{enumerate}
\end{lemma}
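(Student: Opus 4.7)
The plan is to observe that Lemma \ref{lemma:PreDeterminedClosed} already gives (ii) \(\Leftrightarrow\) (iii), and (iii) \(\Rightarrow\) (i) is immediate since every pre-determined strategy is a strategy. So the only substantive direction is (i) \(\Rightarrow\) (ii), which is where the \(G_\delta\) hypothesis on \(\mathcal A\) must be used.

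To prove (i) \(\Rightarrow\) (ii), let \(\sigma\) be a winning strategy for One in \(G_1(\mathscr N[\mathcal A], \neg\mathcal O(X, \mathcal B))\). I will extract a countable subfamily of \(\mathcal A\) that is cofinal for \(\mathcal B\) by indexing a tree of runs by \(\omega^{<\omega}\). Put \(A_\emptyset = \sigma(\emptyset)\), and using the \(G_\delta\) hypothesis, fix open sets \(V_n^\emptyset\) with \(A_\emptyset = \bigcap_n V_n^\emptyset\). Recursively, for each \(s = (n_0, \dots, n_k) \in \omega^{<\omega}\), let
\[
A_s = \sigma\bigl(V_{n_0}^\emptyset,\ V_{n_1}^{(n_0)},\ \dots,\ V_{n_k}^{(n_0,\dots,n_{k-1})}\bigr),
\]
and pick open sets \(V_n^s\) with \(A_s = \bigcap_n V_n^s\). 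The family \(\mathcal T = \{A_s : s \in \omega^{<\omega}\}\) is countable and lies in \(\mathcal A\).

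The key claim is that \(\mathcal T\) is cofinal for \(\mathcal B\). Suppose not, and choose \(B \in \mathcal B\) with \(B \not\subseteq A_s\) for every \(s\). Since \(B \not\subseteq \bigcap_n V_n^s\), for each \(s\) there exist \(m(s) \in \omega\) and a point \(x_s \in B \setminus V_{m(s)}^s\). Define a counter-play by \(t_0 = \emptyset\) and \(t_{k+1}\) the extension of \(t_k\) by \(m(t_k)\); have Two respond with \(V_{m(t_k)}^{t_k}\) at turn \(k\). Because \(A_{t_k} \subseteq V_{m(t_k)}^{t_k}\), this is a legal play following \(\sigma\). Since \(\sigma\) wins, the sequence of Two's responses belongs to \(\mathcal O(X, \mathcal B)\), so some \(k\) satisfies \(B \subseteq V_{m(t_k)}^{t_k}\); this contradicts \(x_{t_k} \in B \setminus V_{m(t_k)}^{t_k}\). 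Hence \(\mathcal T\) witnesses \(\cof(\mathcal A; \mathcal B, \subseteq) \leq \omega\).

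The main obstacle is purely bookkeeping: making the recursive indexing over \(\omega^{<\omega}\) precise enough that the diagonalized counter-play is genuinely a valid run consistent with \(\sigma\). Once the tree is set up, the diagonal trick exploiting the \(G_\delta\) representations of the \(A_s\) delivers the contradiction cleanly, and the equivalence chain closes.
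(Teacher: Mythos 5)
Your proposal is correct and follows essentially the same route as the paper: the substantive direction (i) \(\Rightarrow\) (ii) is handled by building a countable tree of runs in which Two's moves are restricted to the countable \(G_\delta\) representations of One's plays (your \(\omega^{<\omega}\)-indexing is just a relabeling of the paper's tree of \(\langle \sigma(w), U\rangle\)-extensions), and then diagonalizing against a \(B \in \mathcal B\) not dominated by any set in the tree to contradict that \(\sigma\) is winning. The remaining equivalences are dispatched exactly as in the paper, via Lemma \ref{lemma:PreDeterminedClosed} and the triviality of (iii) \(\Rightarrow\) (i).
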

\begin{proof}
    Let \(\sigma\) be a strategy for One.
    Without loss of generality, One is playing sets from \(\mathcal A\) and Two plays open sets which contain One's play.
    For every \(A \in \mathcal A\), let \(\mathcal U_A\) be a countable collection of open sets so that \(A = \bigcap \mathcal U_A\).

    Define a tree in the following way.
    Let \(T_0 = \emptyset\).
    For \(n \in \omega\), we define
    \[
        T_{n+1} = \{ w \concat \langle \sigma(w), U \rangle : w\in T_n \text{ and } U \in \mathcal U_{\sigma(w)} \}.
    \]
    Observe that each \(T_n\) is countable as each \(\mathcal U_A\) is countable.
    Hence,
    \[
        \mathscr F := \bigcup_{n\in\omega} \{ \sigma(w) : w \in T_n \}
    \]
    is a countable subset of \(\mathcal A\).

    By way of contradiction, suppose there is some \(B \in \mathcal B\) so that \(B \not\subseteq A\) for all \(A \in \mathscr F\).
    Let \(A_0 = \sigma(\emptyset)\).
    Since \(B \not\subseteq A_0\), there must be some \(x_0 \in B\setminus A_0\).
    As \(A_0 = \bigcap \mathcal U_{A_0}\), there is some \(U_0 \in \mathcal U_{A_0}\) so that \(A_0 \subseteq U_0\) and \(x_0 \not\in U_0\).

    Recursively, this defines a run of the game \(A_0, U_0, A_1 , U_1 , \ldots\) according to \(\sigma\).
    So we can conclude that \(\{ U_n : n \in \omega \} \in \mathcal O(X,\mathcal B)\).
    Thus, \(B \subseteq U_n\) for some \(n \in \omega\) but then \(x_n \in U_n\), a contradiction.
    Therefore, \(\cof(\mathcal A; \mathcal B, \subseteq) \leq \omega\).

    The rest of the equivalence is clear.
\end{proof}

\begin{note}
    Let \(X\) be the one-point Lindel\"{o}fication of \(\omega_1\) and consider \(G_1(\mathscr N[[X]^{<\omega}], \neg \mathcal O(X, [X]^{<\omega}))\).
    In \(X\), \(\{\omega_1\}\) is closed, but not a \(G_\delta\).
    One has a winning strategy in \(G_1(\mathscr N[[X]^{<\omega}], \neg \mathcal O(X, [X]^{<\omega}))\), but \(\cof([X]^{<\omega}; [X]^{<\omega} , \subseteq) = \omega_1\).

    Now consider \(X = \mathbb{R}\).
    Let \(\mathcal M\) be the meager subsets of \(\mathbb{R}\).
    Then player One has a winning tactic (in two moves) for \(G_1(\mathscr N[\mathcal M], \neg \mathcal O_X)\), but \(\cof(\mathcal M; X, \subseteq) = \mbox{cov}(\mathcal M) > \omega\).
\end{note}

\section{The Main Theorems}
\begin{theorem}\label{MD1}
Suppose \(X\) is a Tychonoff space and \(\mathcal A, \mathcal B \subseteq \wp(X)\).
Suppose \(\mathcal A\) and \(\mathcal B\) are ideal-bases and that \(\mathcal A\) consists of closed sets.
Then the following diagrams are true, where dashed arrows require the assumption that \(X\) is \(\mathcal A\)-normal and dotted lines require the assumption that \(\mathcal B\) consists of \(\mathbb R\)-bounded sets.

If \(X\) is \(\mathcal A\)-normal, \(\mathcal B\) consists of \(\mathbb R\)-bounded sets, and \(\mathcal A\) consists of \(G_\delta\) sets, then all of the statements across both diagrams are equivalent.

\noindent
\framebox[\textwidth]{
\begin{tikzpicture}[scale=.975, auto, transform shape]
    %Nodes
    \node (pointOpen) at (-4,0) {\(\One \uparrow G_1(\mathscr N[\mathcal A], \neg \mathcal O(X,\mathcal B))\)};
    \node (pointLambda) at (-4,-1.5) {\(\One \uparrow G_1(\mathscr N[\mathcal A], \neg \Lambda(X,\mathcal B))\)};
    \node (pointGamma) at (-4,-3) {\(\One \uparrow G_1(\mathscr N[\mathcal A], \neg \Gamma(X,\mathcal B))\)};

    \node (GruenhageLim) at (-4,-4.5) {\(\One \uparrow G_1(\mathscr N_{C_{\mathcal A}(X)}(\mathbf 0), \neg \Gamma_{C_{\mathcal B}(X),\mathbf 0})\)};
    \node (GruenhageCluster) at (-4,-6) {\(\One \uparrow G_1(\mathscr N_{C_{\mathcal A}(X)}(\mathbf 0), \neg \Omega_{C_{\mathcal B}(X),\mathbf 0})\)};
    \node (CL) at (-4,-7.5) {\(\One \uparrow G_1(\mathscr{T}_{C_{\mathcal A}(X)},\neg \Omega_{C_{\mathcal B}(X),\mathbf 0})\)};
    \node (CD) at (-4,-9) {\(\One \uparrow G_1(\mathscr{T}_{C_{\mathcal A}(X)},\mbox{CD}_{C_{\mathcal B}(X)})\)};

    \node (Rothberger) at (4,0) {\(\Two \uparrow G_1(\mathcal O(X, \mathcal A), \mathcal O(X,\mathcal B))\)};
    \node (RothbergerLambda) at (4,-1.5) {\(\Two \uparrow G_1(\mathcal O(X, \mathcal A), \Lambda(X,\mathcal B))\)};
    \node (RothbergerGamma) at (4,-3) {\(\Two \uparrow G_1(\mathcal O(X, \mathcal A), \Gamma(X,\mathcal B))\)};

    \node (spacer) at (4,-4.5) {\phantom{\(\One \uparrow G_1(\mathscr N_{C_{\mathcal A}(X)}(\mathbf 0), \neg \Gamma_{C_{\mathcal B}(X),\mathbf 0})\)}};
    \node (CFT) at (4,-6) {\(\Two \uparrow G_1(\Omega_{C_{\mathcal A}(X),\mathbf 0},\Omega_{C_{\mathcal B}(X),\mathbf 0})\)};
    \node (CDFT) at (4,-7.5) {\(\Two \uparrow G_1(\mathcal D_{C_{\mathcal A}(X)},\Omega_{C_{\mathcal B}(X),\mathbf 0})\)};

    %Lines
    \draw[<->] (pointOpen.east) -- (Rothberger.west);

    \draw[<->] (pointOpen.south) -- (pointLambda.north);
    \draw[<->] (Rothberger.south) -- (RothbergerLambda.north);

    \draw[<->] (pointLambda.east) -- (RothbergerLambda.west);

    \draw[<->] (pointLambda.south) -- (pointGamma.north);
    \draw[<->] (RothbergerLambda.south) -- (RothbergerGamma.north);

    \draw[<->] (pointGamma.east) -- (RothbergerGamma.west);

    \draw[->] ([xshift=.2cm]pointGamma.south) -- ([xshift=.2cm]GruenhageLim.north);
    \draw[->, dashed] ([xshift=-.2cm]GruenhageLim.north) -- ([xshift=-.2cm]pointGamma.south);
    \draw[->] ([xshift=.2cm]RothbergerGamma.south) -- ([xshift=.2cm]CFT.north);
    \draw[->, dashed] ([xshift=-.2cm]CFT.north) -- ([xshift=-.2cm]RothbergerGamma.south);

    \draw[->] ([xshift=.2cm]GruenhageLim.south) -- ([xshift=.2cm]GruenhageCluster.north);
    \draw[->, dashed] ([xshift=-.2cm]GruenhageCluster.north) -- ([xshift=-.2cm]GruenhageLim.south);

    \draw[<->] (GruenhageCluster.east) -- (CFT.west);

    \draw[->] ([xshift=.2cm]GruenhageCluster.south) -- ([xshift=.2cm]CL.north);
    \draw[->, dashed] ([xshift=-.2cm]CL.north) -- ([xshift=-.2cm]GruenhageCluster.south);
    \draw[->] ([xshift=.2cm]CFT.south) -- ([xshift=.2cm]CDFT.north);
    \draw[->, dashed] ([xshift=-.2cm]CDFT.north) -- ([xshift=-.2cm]CFT.south);

    \draw[<->] (CL.east) -- (CDFT.west);

    \draw[->] ([xshift=.2cm]CL.south) -- ([xshift=.2cm]CD.north);
    \draw[->, densely dotted] ([xshift=-.2cm]CD.north) -- ([xshift=-.2cm]CL.south);
\end{tikzpicture}
}

\noindent
\framebox[\textwidth]{
\begin{tikzpicture}[scale=.975, auto, transform shape]
    %Nodes
    \node (XCof) at (0,0) {\(\cof(\mathcal A \times \omega; \mathcal B \times \omega, \subseteq) = \omega\)};

    \node (FunCof) at (0,-1.5) {\(\cof(\mathscr N_{C_{\mathcal A}(X)}(\mathbf 0); \mathscr N_{C_{\mathcal B}(X)}(\mathbf 0), \supseteq) = \omega\)};

    \node (pointGamma) at (-4,1.5) {\(\One \underset{\text{pre}}{\uparrow} G_1(\mathscr N[\mathcal A], \neg \Gamma(X,\mathcal B))\)};
    \node (pointLambda) at (-4,3) {\(\One \underset{\text{pre}}{\uparrow} G_1(\mathscr N[\mathcal A], \neg \Lambda(X,\mathcal B))\)};
    \node (pointOpen) at (-4,4.5) {\(\One \underset{\text{pre}}{\uparrow} G_1(\mathscr N[\mathcal A], \neg \mathcal O(X,\mathcal B))\)};

    \node (RothbergerGamma) at (4,1.5) {\(\Two \underset{\text{mark}}{\uparrow} G_1(\mathcal O(X, \mathcal A), \Gamma(X,\mathcal B))\)};
    \node (RothbergerLambda) at (4,3) {\(\Two \underset{\text{mark}}{\uparrow} G_1(\mathcal O(X, \mathcal A), \Lambda(X,\mathcal B))\)};
    \node (Rothberger) at (4,4.5) {\(\Two \underset{\text{mark}}{\uparrow} G_1(\mathcal O(X, \mathcal A), \mathcal O(X,\mathcal B))\)};

    \node (GruenhageLim) at (-4,-3) {\(\One \underset{\text{pre}}{\uparrow} G_1(\mathscr N_{C_{\mathcal A}(X)}(\mathbf 0), \neg \Gamma_{C_{\mathcal B}(X),\mathbf 0})\)};
    \node (GruenhageCluster) at (-4,-4.5) {\(\One \underset{\text{pre}}{\uparrow} G_1(\mathscr N_{C_{\mathcal A}(X)}(\mathbf 0), \neg \Omega_{C_{\mathcal B}(X),\mathbf 0})\)};
    \node (CL) at (-4,-6) {\(\One \underset{\text{pre}}{\uparrow} G_1(\mathscr{T}_{C_{\mathcal A}(X)},\neg \Omega_{C_{\mathcal B}(X),\mathbf 0})\)};
    \node (CD) at (-4,-7.5) {\(\One \underset{\text{pre}}{\uparrow} G_1(\mathscr{T}_{C_{\mathcal A}(X)},\mbox{CD}_{C_{\mathcal B}(X)})\)};

    \node (spacer) at (4,-3) {\phantom{\(\One \underset{\text{pre}}{\uparrow} G_1(\mathscr N_{C_{\mathcal A}(X)}(\mathbf 0), \neg \Gamma_{C_{\mathcal B}(X),\mathbf 0})\)}};
    \node (CFT) at (4,-4.5) {\(\Two \underset{\text{mark}}{\uparrow} G_1(\Omega_{C_{\mathcal A}(X),\mathbf 0},\Omega_{C_{\mathcal B}(X),\mathbf 0})\)};
    \node (CDFT) at (4,-6) {\(\Two \underset{\text{mark}}{\uparrow} G_1(\mathcal D_{C_{\mathcal A}(X)},\Omega_{C_{\mathcal B}(X),\mathbf 0})\)};

    %Lines
    \draw[<->] (pointOpen.east) -- (Rothberger.west);

    \draw[<->] (pointOpen.south) -- (pointLambda.north);
    \draw[<->] (Rothberger.south) -- (RothbergerLambda.north);

    \draw[<->] (pointLambda.east) -- (RothbergerLambda.west);

    \draw[<->] (pointLambda.south) -- (pointGamma.north);
    \draw[<->] (RothbergerLambda.south) -- (RothbergerGamma.north);

    \draw[<->] (pointGamma.east) -- (RothbergerGamma.west);

    \draw[<->] (pointGamma.south) -- (XCof.west);
    \draw[<->] (RothbergerGamma.south) -- (XCof.east);

    \draw[<->] (XCof.south) -- (FunCof.north);

    \draw[<->] (FunCof.west) -- (GruenhageLim.north);
    \draw[<->] (FunCof.east) -- (CFT.north);

    \draw[<->] (GruenhageLim.south) -- (GruenhageCluster.north);

    \draw[<->] (GruenhageCluster.east) -- (CFT.west);

    \draw[->] ([xshift=.2cm]GruenhageCluster.south) -- ([xshift=.2cm]CL.north);
    \draw[->, dashed] ([xshift=-.2cm]CL.north) -- ([xshift=-.2cm]GruenhageCluster.south);
    \draw[->] ([xshift=.2cm]CFT.south) -- ([xshift=.2cm]CDFT.north);
    \draw[->, dashed] ([xshift=-.2cm]CDFT.north) -- ([xshift=-.2cm]CFT.south);

    \draw[<->] (CL.east) -- (CDFT.west);

    \draw[->] ([xshift=.2cm]CL.south) -- ([xshift=.2cm]CD.north);
    \draw[->, densely dotted] ([xshift=-.2cm]CD.north) -- ([xshift=-.2cm]CL.south);

\end{tikzpicture}
}
\end{theorem}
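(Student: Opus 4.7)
The proof is essentially a diagram chase: every arrow in both displays matches a result already established, and the strategy is to verify each one while keeping track of which arrows require which hypothesis (\(\mathcal{A}\)-normality for the dashed arrows, \(\mathbb{R}\)-boundedness of \(\mathcal{B}\) for the dotted arrow, and the \(G_\delta\) condition for the final collapse between the two diagrams). No genuinely new argument is needed; the point is to assemble the machinery from Sections~3--5 in the right order.

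First I would establish the horizontal arrows. In both diagrams the top three rows are linked by the Corollary immediately following Theorem~\ref{thm:ClontzDuality} (games of the form \(G_1(\mathcal{O}(X,\mathcal{A}), \cdot)\) and \(G_1(\mathscr{N}[\mathcal{A}], \neg \cdot)\) are dual). The Gruenhage-cluster/\(\Omega\)-fan row is Corollary~\ref{corollary:CFTDual}, and the closed-discrete row is Proposition~\ref{prop:CD->CDFT}. Each duality result is stated in a form that preserves strategy strength, so the same horizontals appear in the second diagram with Markov and pre-determined strategies.

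Next I would handle the vertical arrows on the left column of the first diagram. The equivalences between the \(\mathcal{O}\), \(\Lambda\), and \(\Gamma\) flavors of \(G_1(\mathscr{N}[\mathcal{A}], \neg \cdot)\) follow from Corollary~\ref{lemma:Open=Large} together with Corollary~\ref{lem:Open=Gamma} (whose filter-base hypothesis is exactly the ideal-base assumption on \(\mathcal{A}\)). The descent from the point-\(\Gamma\) game to Gruenhage's \(W\)-game at \(\mathbf{0}\) is Corollary~\ref{corollary:Gru<PointGamma}\ref{gruA}, and the dashed reverse uses Corollary~\ref{corollary:PO=Gru=CD}\ref{POA} combined with Corollary~\ref{corollary:Gru<PointGamma}\ref{gruB}. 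The subsequent descents through the Gruenhage cluster game, the \(\Omega\)-topology game, and the closed-discrete game on \(C_{\mathcal{A}}(X)\) are exactly the remaining parts of Corollary~\ref{corollary:PO=Gru=CD}; the final dotted arrow is the one requiring \(\mathbb{R}\)-boundedness. The right column of the first diagram is handled symmetrically by the three parts of Corollary~\ref{corollary:Roth=CFT=CDFT}.

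For the second diagram I would re-run the same argument with Markov/pre-determined versions and then handle the two cofinality nodes. The equivalence \(\One \underset{\text{pre}}{\uparrow} G_1(\mathscr{N}[\mathcal{A}], \neg\mathcal{O}(X,\mathcal{B})) \iff \cof(\mathcal{A}; \mathcal{B}, \subseteq) = \omega\) is Lemma~\ref{lemma:PreDeterminedClosed}; replacing \(\mathcal{A}\) by \(\mathcal{A}\times\omega\) is harmless since cofinality is unchanged by a countable factor. Lemma~\ref{lemma:CofinalityBetweenGroundAndFunctions} passes between the ground cofinality and the function-space cofinality, and Lemma~\ref{lemma:Gru=Cof} links the latter to the pre-determined Gruenhage games. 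The ``all equivalent'' clause under the additional \(G_\delta\) hypothesis follows from Lemma~\ref{lemma:CoveringPropertyPre}, which upgrades an arbitrary winning strategy for One in the point-open game into a pre-determined one and thereby collapses the two diagrams into a single equivalence class. The main obstacle is bookkeeping rather than mathematics: the diagrams are large, the solid/dashed/dotted distinction must be respected, and care must be taken not to silently invoke Lemma~\ref{lemma:Pawlikowski} (flagged in red) in the strategy-strengthening steps, since only the dualities and topological translations from Sections~3--5 are needed for the arrows as drawn.
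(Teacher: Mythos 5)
Your proposal assembles exactly the ingredients the paper itself uses: Theorem \ref{thm:ClontzDuality} and its corollary, Corollary \ref{corollary:CFTDual}, and Proposition \ref{prop:CD->CDFT} for the horizontal dualities; Corollaries \ref{lemma:Open=Large} and \ref{lem:Open=Gamma} for the \(\mathcal O/\Lambda/\Gamma\) collapse; Corollaries \ref{corollary:Roth=CFT=CDFT}, \ref{corollary:PO=Gru=CD}, \ref{corollary:Gru<PointGamma} for the columns; and Lemmas \ref{lemma:PreDeterminedClosed}, \ref{lemma:CofinalityBetweenGroundAndFunctions}, \ref{lemma:Gru=Cof}, \ref{lemma:CoveringPropertyPre} for the cofinality nodes and the \(G_\delta\) collapse, with the observation about \(\mathcal A \times \omega\) versus \(\mathcal A\) being the same harmless adjustment the paper makes. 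You are also right that Lemma \ref{lemma:Pawlikowski} is not needed for this theorem.

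The one step that would fail as written is your justification of the dashed reverse arrows on the left, e.g.\ ``\(\One \uparrow G_1(\mathscr N_{C_{\mathcal A}(X)}(\mathbf 0), \neg\Gamma_{C_{\mathcal B}(X),\mathbf 0}) \Rightarrow \One \uparrow G_1(\mathscr N[\mathcal A], \neg\Gamma(X,\mathcal B))\) under \(\mathcal A\)-normality,'' which you attribute to \ref{POA} of Corollary \ref{corollary:PO=Gru=CD} together with \ref{gruB} of Corollary \ref{corollary:Gru<PointGamma}. Both of those results are statements of the form \(\mathcal G_1 \leq_{\Two} \mathcal G_2\) with the function-space game on the left and the point game on the right (or two function-space games), and \(\leq_{\Two}\) only transfers a winning strategy for One from the right-hand game to the left-hand game; so \ref{POA} and \ref{gruB} yield precisely the \emph{downward} solid arrows and cannot be reversed. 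The \(\mathcal A\)-normality-dependent upward dashed arrows (and likewise the dashed arrow from \(\Two \uparrow G_1(\mathcal D_{C_{\mathcal A}(X)},\Omega_{C_{\mathcal B}(X),\mathbf 0})\) back up the right column) are obtained by chasing around the diagram: pass through Corollary \ref{corollary:CFTDual} (or Proposition \ref{prop:CD->CDFT}) to the dual game on the right, apply \ref{RothB} and then \ref{RothC} of Corollary \ref{corollary:Roth=CFT=CDFT} --- this is the only place \(\mathcal A\)-normality enters --- to reach \(\Two \uparrow G_1(\mathcal O(X,\mathcal A), \Lambda(X,\mathcal B))\), and return by Clontz duality and the ideal-base equivalences to the point-\(\Lambda/\Gamma\) games; the dotted arrow is closed analogously via \ref{POD}, where \(\mathbb R\)-boundedness is used. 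With that rerouting your argument is the paper's proof; without it the dashed and dotted implications are not justified.
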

\begin{proof}
    Since we have assumed that \(\mathcal A\) and \(\mathcal B\) are ideal-bases, Lemma \ref{lem:Open=Gamma} implies that all three versions of the generalized point-open game are equivalent for player One.
    This applies for full strategies and pre-determined strategies.

    The fact that \(\One \uparrow G_1(\mathscr N[\mathcal A], \neg \Psi(X,\mathcal B))\) is equivalent to \(\Two \uparrow G_1(\mathcal O(X, \mathcal A), \Psi(X,\mathcal B))\) (where \(\Psi\) is \(\mathcal O\), \(\Lambda\), or \(\Gamma\)) comes from the general reflection result from Clontz, Theorem \ref{thm:ClontzDuality}.
    This also implies the analogous statements for pre-determined and Markov strategies.
    Since all of the versions of the generalized point-open game are equivalent for player One, we can conclude that all of the versions of the generalized Rothberger game are equivalent for player Two.

    By Corollary \ref{corollary:CFTDual}, \(\Two \uparrow G_1(\Omega_{C_{\mathcal A}(X),\mathbf 0},\Omega_{C_{\mathcal B}(X),\mathbf 0})\) if and only if \(\One \uparrow G_1(\mathscr N_{C_{\mathcal A}(X)}(\mathbf 0), \neg \Omega_{C_{\mathcal B}(X),\mathbf 0})\), and also at the level of Markov/pre-determined strategies.
    Likewise, Proposition \ref{prop:CD->CDFT} implies that \(\Two \uparrow G_1(\mathcal D_{C_{\mathcal A}(X)},\Omega_{C_{\mathcal B}(X),\mathbf 0})\) is equivalent to \(\One \uparrow G_1(\mathscr{T}_{C_{\mathcal A}(X)}, \neg \Omega_{C_{\mathcal B}(X),\mathbf 0})\), and also at the level of Markov/pre-determined strategies.

    Corollary \ref{corollary:Roth=CFT=CDFT} yields the implications between \(G_1(\mathcal O(X, \mathcal A), \Lambda(X,\mathcal B))\), \(G_1(\Omega_{C_{\mathcal A}(X),\mathbf 0},\Omega_{C_{\mathcal B}(X),\mathbf 0})\), and \(G_1(\mathcal D_{C_{\mathcal A}(X)},\Omega_{C_{\mathcal B}(X),\mathbf 0})\).
    Then Corollaries \ref{corollary:PO=Gru=CD} and \ref{corollary:Gru<PointGamma} provide the arrows between games for the rest of the left side of the diagram.

    We now check the improved implications in the second diagram.
    By Lemma \ref{lemma:PreDeterminedClosed}, we have that \(\One \underset{\text{pre}}{\uparrow} G_1(\mathscr N[\mathcal A], \neg \mathcal O(X,\mathcal B))\) if and only if \(\cof(\mathcal A \times \omega, \mathcal B \times \omega) = \omega\).
    Then Lemma \ref{lemma:CofinalityBetweenGroundAndFunctions} implies that \(\cof(\mathcal A \times \omega, \mathcal B \times \omega) = \omega\) if and only if \(\cof(\mathscr N_{C_{\mathcal A}(X)}(\mathbf 0); \mathscr N_{C_{\mathcal B}(X)}(\mathbf 0), \supseteq) = \omega\).
    Finally, using Lemma \ref{lemma:Gru=Cof}  we see that \(\cof(\mathscr N_{C_{\mathcal A}(X)}(\mathbf 0); \mathscr N_{C_{\mathcal B}(X)}(\mathbf 0), \supseteq) = \omega\) if and only if \(\One \underset{\text{pre}}{\uparrow} G_1(\mathscr N_{C_{\mathcal A}(X)}(\mathbf 0), \neg \Omega_{C_{\mathcal B}(X),\mathbf 0})\), which is in turn equivalent to \(\One \underset{\text{pre}}{\uparrow} G_1(\mathscr N_{C_{\mathcal A}(X)}(\mathbf 0), \neg \Gamma_{C_{\mathcal B}(X),\mathbf 0})\).
    This suffices to improve the arrows from the first diagram and finishes the second diagram.

\end{proof}

\begin{theorem} \label{MD2}
Suppose \(X\) is a Tychonoff space and \(\mathcal A, \mathcal B \subseteq \wp(X)\).
Suppose \(\mathcal A\) and \(\mathcal B\) are ideal-bases and that \(\mathcal A\) consists of closed sets.
Then the following diagrams are true, where dashed arrows require the assumption that \(X\) is \(\mathcal A\)-normal and dotted lines require the assumption that \(X\) is \(\mathcal A\)-normal and \(\mathcal B\) consists of \(\mathbb R\)-bounded sets.

{\color{red}
If \(X\) is \(\mathcal A\)-normal, \(\mathcal B\) consists of \(\mathbb R\)-bounded sets, and \(\mathcal A \prec \mathcal B\), then all of the statements across both diagrams are equivalent.
}
Correction: If \(\mathcal A = \mathcal B\) are the finite subsets of \(X\), or \(\mathcal A = \mathcal B\) are the compact subsets of \(X\), then all of the statements across both diagrams are equivalent.
\noindent
\framebox[\textwidth]{
\begin{tikzpicture}
    %Nodes
    \node (pointOpen) at (-4,0) {\(\Two \uparrow G_1(\mathscr N[\mathcal A], \neg \mathcal O(X,\mathcal B))\)};
    \node (pointLambda) at (-4,-1.5) {\(\Two \uparrow G_1(\mathscr N[\mathcal A], \neg \Lambda(X,\mathcal B))\)};
    \node (GruenhageCluster) at (-4,-3) {\(\Two \uparrow G_1(\mathscr N_{C_{\mathcal A}(X)}(\mathbf 0), \neg \Omega_{C_{\mathcal B}(X),\mathbf 0})\)};
    \node (CL) at (-4,-4.5) {\(\Two \uparrow G_1(\mathscr{T}_{C_{\mathcal A}(X)}, \neg \Omega_{C_{\mathcal B}(X),\mathbf 0})\)};
    \node (CD) at (-4,-6) {\(\Two \uparrow G_1(\mathscr{T}_{C_{\mathcal A}(X)},\mbox{CD}_{C_{\mathcal B}(X)})\)};

    \node (Rothberger) at (4,0) {\(\One \uparrow G_1(\mathcal O(X, \mathcal A), \mathcal O(X,\mathcal B))\)};
    \node (RothbergerLambda) at (4,-1.5) {\(\One \uparrow G_1(\mathcal O(X, \mathcal A), \Lambda(X,\mathcal B))\)};
    \node (CFT) at (4,-3) {\(\One \uparrow G_1(\Omega_{C_{\mathcal A}(X),\mathbf 0},\Omega_{C_{\mathcal B}(X),\mathbf 0})\)};
    \node (CDFT) at (4,-4.5) {\(\One \uparrow G_1(\mathcal D_{C_{\mathcal A}(X)},\Omega_{C_{\mathcal B}(X),\mathbf 0})\)};

    %Lines
    \draw [<->] (pointOpen.east) -- (Rothberger.west);

    \draw[<->] (pointOpen.south) -- (pointLambda.north);
    \draw[<->] (Rothberger.south) -- (RothbergerLambda.north);

    \draw[<->] (pointLambda.east) -- (RothbergerLambda.west);

    \draw[<->] (pointLambda.south) -- (GruenhageCluster.north);
    \draw[<->] (RothbergerLambda.south) -- (CFT.north);

    \draw[<->] (GruenhageCluster.south) -- (CL.north);

    \draw[<->] (GruenhageCluster.east) -- (CFT.west);

    \draw[->, densely dotted] ([xshift=.2cm]CL.south) -- ([xshift=.2cm]CD.north);
    \draw[->] ([xshift=-.2cm]CD.north) -- ([xshift=-.2cm]CL.south);
    \draw[<->] (CFT.south) -- (CDFT.north);

    \draw[<->] (CDFT.west) -- (CL.east);

    \draw[->] (CD.east) -- (CDFT.south);
\end{tikzpicture}
}

\noindent
\framebox[\textwidth]{
\begin{tikzpicture}
    %Nodes
    \node (pointOpen) at (-4,0) {\(\Two \underset{\text{mark}}{\uparrow} G_1(\mathscr N[\mathcal A], \neg \mathcal O(X,\mathcal B))\)};
    \node (pointLambda) at (-4,-1.5) {\(\Two \underset{\text{mark}}{\uparrow} G_1(\mathscr N[\mathcal A], \neg \Lambda(X,\mathcal B))\)};
    \node (GruenhageCluster) at (-4,-3) {\(\Two \underset{\text{mark}}{\uparrow} G_1(\mathscr N_{C_{\mathcal A}(X)}(\mathbf 0), \neg \Omega_{C_{\mathcal B}(X),\mathbf 0})\)};
    \node (CL) at (-4,-4.5) {\(\Two \underset{\text{mark}}{\uparrow} G_1(\mathscr{T}_{C_{\mathcal A}(X)}, \neg \Omega_{C_{\mathcal B}(X),\mathbf 0})\)};
    \node (CD) at (-4,-6) {\(\Two \underset{\text{mark}}{\uparrow} G_1(\mathscr{T}_{C_{\mathcal A}(X)},\mbox{CD}_{C_{\mathcal B}(X)})\)};

    \node (Rothberger) at (4,0) {\(\One \underset{\text{pre}}{\uparrow} G_1(\mathcal O(X, \mathcal A), \mathcal O(X,\mathcal B))\)};
    \node (RothbergerLambda) at (4,-1.5) {\(\One \underset{\text{pre}}{\uparrow} G_1(\mathcal O(X, \mathcal A), \Lambda(X,\mathcal B))\)};
    \node (CFT) at (4,-3) {\(\One \underset{\text{pre}}{\uparrow} G_1(\Omega_{C_{\mathcal A}(X),\mathbf 0},\Omega_{C_{\mathcal B}(X),\mathbf 0})\)};
    \node (CDFT) at (4,-4.5) {\(\One \underset{\text{pre}}{\uparrow} G_1(\mathcal D_{C_{\mathcal A}(X)},\Omega_{C_{\mathcal B}(X),\mathbf 0})\)};

    %Lines
    \draw [<->] (pointOpen.east) -- (Rothberger.west);

    \draw[<->] (pointOpen.south) -- (pointLambda.north);
    \draw[<->] (Rothberger.south) -- (RothbergerLambda.north);

    \draw[<->] (pointLambda.east) -- (RothbergerLambda.west);

    \draw[<->] (pointLambda.south) -- (GruenhageCluster.north);
    \draw[<->] (RothbergerLambda.south) -- (CFT.north);

    \draw[<->] (GruenhageCluster.south) -- (CL.north);

    \draw[<->] (GruenhageCluster.east) -- (CFT.west);

    \draw[->, densely dotted] ([xshift=.2cm]CL.south) -- ([xshift=.2cm]CD.north);
    \draw[->] ([xshift=-.2cm]CD.north) -- ([xshift=-.2cm]CL.south);
    \draw[<->] (CFT.south) -- (CDFT.north);

    \draw[<->] (CDFT.west) -- (CL.east);

    \draw[->] (CD.east) -- (CDFT.south);
\end{tikzpicture}
}

\end{theorem}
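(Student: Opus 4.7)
The plan is to prove Theorem \ref{MD2} in close parallel to Theorem \ref{MD1}, applying reflection duality (Theorem \ref{thm:ClontzDuality}) to convert every $\One \uparrow$ statement from MD1's diagrams into an equivalent $\Two \uparrow$ statement on the opposite side, and vice versa. The same translations will handle the Markov/pre-determined variant, since Clontz's theorem delivers Markov duality alongside strategic duality.

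First I would dispose of the top two rows of each diagram. Because $\mathcal{B}$ is an ideal-base, the initial lemma of Section 3 yields $\mathcal{O}(X,\mathcal{B}) = \Lambda(X,\mathcal{B})$ as sets, so the $\mathcal{O}$-row and $\Lambda$-row represent the same games and the vertical arrows between them are trivial. The corollary to Theorem \ref{thm:ClontzDuality} then gives that $G_1(\mathcal{O}(X, \mathcal{A}), \Psi(X, \mathcal{B}))$ and $G_1(\mathscr{N}[\mathcal{A}], \neg \Psi(X, \mathcal{B}))$ are dual for $\Psi \in \{\mathcal{O}, \Lambda\}$, supplying the horizontal double-arrows at the top of both diagrams at both the strategic and Markov/pre-determined levels. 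Similarly, Corollary \ref{corollary:CFTDual} provides the horizontal duality between the Gruenhage cluster game and the CFT game, and Proposition \ref{prop:CD->CDFT} provides the horizontal duality between the CL and CDFT games together with the downward implication from the CD game to the CDFT game.

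Next, I would fill in the vertical arrows linking the set-cover side to the function-space side. On the right-hand (Rothberger / CFT / CDFT) column, Corollary \ref{corollary:Roth=CFT=CDFT} supplies the $\leq_{\Two}$ relations whose composition links $G_1(\mathcal{O}(X, \mathcal{A}), \Lambda(X, \mathcal{B}))$, $G_1(\Omega_{C_{\mathcal{A}}(X), \mathbf{0}}, \Omega_{C_{\mathcal{B}}(X), \mathbf{0}})$ and $G_1(\mathcal{D}_{C_{\mathcal{A}}(X)}, \Omega_{C_{\mathcal{B}}(X), \mathbf{0}})$, with the backward implications requiring $\mathcal{A}$-normality. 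On the left-hand (point-open / Gruenhage / CL / CD) column, Corollary \ref{corollary:PO=Gru=CD} supplies the analogous chain, with $\mathcal{A}$-normality recovering the upward arrows through the Gruenhage layer, and $\mathbb{R}$-boundedness completing the CL-to-CD step. Because each $\leq_{\Two}$ statement is designed (via Theorem \ref{Translation}) to transfer full, Markov and pre-determined strategies uniformly, the second diagram is obtained simultaneously with the first.

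Finally, for the corrected equivalence that collapses the two diagrams: when $\mathcal{A} = \mathcal{B}$ is either $[X]^{<\omega}$ or $K(X)$, the verified Pawlikowski-style lemma in the referenced arXiv paper yields $\One \uparrow G_1(\mathcal{O}(X, \mathcal{A}), \mathcal{O}(X, \mathcal{A})) \iff \One \underset{\text{pre}}{\uparrow} G_1(\mathcal{O}(X, \mathcal{A}), \mathcal{O}(X, \mathcal{A}))$; dualizing by reflection then also yields the corresponding $\Two \uparrow \iff \Two \underset{\text{mark}}{\uparrow}$ equivalence for the point-open game. This links the top-left entries of the two diagrams and, together with all equivalences already established, collapses every statement across both diagrams into a single equivalence class. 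The main obstacle is precisely this last step: the general Pawlikowski lemma under merely $\mathcal{A} \prec \mathcal{B}$ is not currently known, so the final equivalence can only be asserted in the special cases where it is verified, which is exactly the content of the correction.
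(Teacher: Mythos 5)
Your outline does track the paper's proof: collapse the \(\mathcal O\)- and \(\Lambda\)-rows using the ideal-base hypothesis, get the top horizontal equivalences from Theorem \ref{thm:ClontzDuality} and its corollary for \(\mathscr N[\mathcal A]\) versus \(\mathcal O(X,\mathcal A)\), get the lower horizontal dualities and the arrow out of the closed-discrete game from Corollary \ref{corollary:CFTDual} and Proposition \ref{prop:CD->CDFT}, get the vertical arrows from Corollaries \ref{corollary:PO=Gru=CD} and \ref{corollary:Roth=CFT=CDFT}, and link the two diagrams in the corrected \(\mathcal A=\mathcal B\) finite/compact cases via the revised Pawlikowski lemma transferred through Markov duality. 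One framing slip: MD2 is not obtained by ``dualizing the MD1 diagrams,'' because the MD1 statements concern the opposite players (\(\One\) in the point-open-type games, \(\Two\) in the Rothberger-type games), and duality pairs each game with its reflected partner rather than transporting equivalences across players; what is actually needed --- and what your concrete steps do --- is to rerun the \(\leq_{\Two}\) corollaries using their \(\Two\)-(Markov) and \(\One\)-cannot-(pre) clauses.

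The substantive problem is that you have the conditional directions reversed. For the MD2 diagrams the unconditional clauses of Corollary \ref{corollary:PO=Gru=CD} give the upward implications \(\Two\uparrow G_1(\mathscr T_{C_{\mathcal A}(X)},\text{CD}_{C_{\mathcal B}(X)}) \Rightarrow \Two\uparrow G_1(\mathscr T_{C_{\mathcal A}(X)},\neg\Omega_{C_{\mathcal B}(X),\mathbf 0}) \Rightarrow \Two\uparrow G_1(\mathscr N_{C_{\mathcal A}(X)}(\mathbf 0),\neg\Omega_{C_{\mathcal B}(X),\mathbf 0}) \Rightarrow \Two\uparrow G_1(\mathscr N[\mathcal A],\neg\Lambda(X,\mathcal B))\), and those of Corollary \ref{corollary:Roth=CFT=CDFT} give \(\One\uparrow G_1(\mathcal D_{C_{\mathcal A}(X)},\Omega_{C_{\mathcal B}(X),\mathbf 0}) \Rightarrow \One\uparrow G_1(\Omega_{C_{\mathcal A}(X),\mathbf 0},\Omega_{C_{\mathcal B}(X),\mathbf 0}) \Rightarrow \One\uparrow G_1(\mathcal O(X,\mathcal A),\Lambda(X,\mathcal B))\); it is the downward directions that invoke the last parts of those corollaries, hence \(\mathcal A\)-normality (and, to reach the CD node, \(\mathbb R\)-boundedness together with \(\mathcal A\)-normality, not \(\mathbb R\)-boundedness alone). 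As written, your conditional reading delivers only one-way vertical arrows in the main block --- an MD1-style picture with dashed returns --- whereas the diagrams you are asked to prove have two-way solid arrows there, and the paper's proof closes that block by combining the one-way vertical arrows with the horizontal dualities and claiming equivalence with no extra hypotheses. You must either reproduce (or critically examine) that closing step, or acknowledge that your version establishes a strictly weaker diagram. Your final paragraph is fine: when \(\mathcal A=\mathcal B\) is \([X]^{<\omega}\) or \(K(X)\), \(\mathcal A\)-normality and \(\mathbb R\)-boundedness hold automatically in a Tychonoff space, and the revised Lemma \ref{lemma:Pawlikowski} together with Markov duality links the strategic and limited-information diagrams exactly as in the paper's correction.
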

\begin{proof}
    Since \(\mathcal A\) and \(\mathcal B\) are pre-ideals, the versions of the point-open game are equivalent.

    The arrows between the versions of the point-open game and versions of the Rothberger game come from the duality of the point-open and Rothberger games.
    From this, the versions of the Rothberger game are equivalent.

    Corollaries \ref{corollary:PO=Gru=CD} and \ref{corollary:Gru<PointGamma} generates arrows on the left side of the diagram.
    Similarly, Corollary \ref{corollary:Roth=CFT=CDFT} provides arrows on the right side of the diagram.

    By Corollary, \ref{corollary:CFTDual}, \(\One \underset{\text{pre}}{\uparrow} G_1(\Omega_{C_{\mathcal A}(X),\mathbf 0},\Omega_{C_{\mathcal B}(X),\mathbf 0})\) if and only if \(\Two \underset{\text{mark}}{\uparrow} G_1(\mathscr N_{C_{\mathcal A}(X)}(\mathbf 0), \neg \Omega_{C_{\mathcal B}(X),\mathbf 0})\).

    Proposition \ref{prop:CD->CDFT} adds the implications from the statement \(\Two \underset{\text{mark}}{\uparrow} G_1(\mathscr{T}_{C_{\mathcal A}(X)},\mbox{CD}_{C_{\mathcal B}(X)})\) to \(\One \underset{\text{pre}}{\uparrow} G_1(\mathcal D_{C_{\mathcal A}(X)},\Omega_{C_{\mathcal B}(X),\mathbf 0})\) and then to \(\Two \underset{\text{mark}}{\uparrow} G_1(\mathscr{T}_{C_{\mathcal A}(X)}, \neg \Omega_{C_{\mathcal B}(X),\mathbf 0})\).

    With these connections, the main block of the diagram becomes equivalent without any extra assumptions needed.

    {\color{red}If \(\mathcal A \prec \mathcal B\), Lemma \ref{lemma:Pawlikowski} applies and all of the statements across the two diagrams are equivalent.}
    Correction: If \(\mathcal A = \mathcal B\) are either the finite or compact subsets of \(X\), Lemma \ref{lemma:Pawlikowski}'s revision in \href{https://arxiv.org/abs/2102.00296}{arXiv:2102.00296} applies and all of the statements across the two diagrams are equivalent.
\end{proof}

\begin{note}
Suppose \(\mathcal A = \mathcal B = [\mathbb R]^\omega\).
Define a strategy for One in \(G_1(\mathcal O(X, \mathcal A), \mathcal O(X, \mathcal B))\) as follows:
In the \(n^{\text{th}}\) inning, for any countable set \(A \subseteq \mathbb R\), choose \(U_{A,n}\) to be an open set so that \(A \subseteq U_{A,n}\) and \(U_{A,n}\) has Lebesgue measure \(< 2^{-n}\).
Then \(\sigma(n) = \{ U_{A,n} : A \in \mathcal A\}\).
This is a pre-determined winning strategy for One.

Consider a strategy for Two in \(G_1(\mathcal D_{C_{\mathcal A}(X)}, \Omega_{C_{\mathcal B}(X), \mathbf 0})\) defined as follows:
In the \(n^{\text{th}}\) inning, One's play must have non-trivial intersection with \([\mathbf 0 ; \mathbb Q, 2^{-n}]\).
Let Two choose \(f_n\) in this intersection.
Then as in the previous example, \(f_n \to \mathbf 0\).
This shows that if \(\mathcal A\) does not consist of closed sets, then the properties do not have to be equivalent.
\end{note}
\begin{note}
    If we do not require that \(\mathcal A\) be an ideal base, then the statements
    \begin{itemize}
        \item \(\Two \uparrow G_1(\mathscr N[\mathcal A], \neg \Gamma(X,\mathcal B))\),
        \item \(\One \uparrow G_1(\mathcal O(X, \mathcal A), \Gamma(X,\mathcal B))\), and
        \item \(\Two \uparrow G_1(\mathscr N_{C_{\mathcal A}(X)}(\mathbf 0), \neg \Gamma_{C_{\mathcal B}(X),\mathbf 0})\)
    \end{itemize}
    are all strictly weaker than any of those present in the first diagram of the previous theorem.
    This is also true for Markov/pre-determined strategies.
    The counter example of \(X = \mathbb Z\) with \(\mathcal A\) and \(\mathcal B\) both set to be the singleton subsets of \(\mathbb Z\) demonstrates this.

    Assuming that \(\mathcal A\) is an ideal base makes the situation more complicated.
    In that situation \(\One \uparrow G_1(\mathscr N[\mathcal A], \neg \mathcal O(X,\mathcal B))\) implies that \(\One \uparrow G_1(\mathscr N[\mathcal A], \neg \Gamma(X,\mathcal B))\).
    So to find a space \(X\) where \(\Two \not\uparrow G_1(\mathscr N[\mathcal A], \neg \mathcal O(X,\mathcal B))\) and \(\Two \uparrow G_1(\mathscr N[\mathcal A], \neg \Gamma(X,\mathcal B))\), we need for \(G_1(\mathscr N[\mathcal A], \neg \mathcal O(X,\mathcal B))\) to be undetermined and \(X\) to not be a \(\gamma\)-set.
    These are necessary but not sufficient conditions.
    We do not currently know of any counter-examples, but we also do not know a good reason why the games should be equivalent for player Two.
\end{note}

\section{Applications}

Corollaries \ref{meagerGame} and \ref{nullGame} are direct applications of Lemma \ref{lemma:CoveringPropertyPre}.
\begin{corollary} \label{meagerGame}
    Suppose \(X\) is a space where all closed sets are \(G_\delta\) sets, \(\mathcal A\) consists of the closed nowhere dense sets, and \(\mathcal B\) is the set of all singleton subsets of \(X\).
    Then One has a winning strategy in \(G_1(\mathscr N[\mathcal A], \neg \mathcal O(X,\mathcal B))\) if and only if \(X\) is meager.
\end{corollary}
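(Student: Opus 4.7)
The plan is to invoke Lemma \ref{lemma:CoveringPropertyPre} directly and then translate the cofinality condition into the statement that $X$ is meager.

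First I would check the hypotheses of Lemma \ref{lemma:CoveringPropertyPre}. The collection $\mathcal A$ consists of closed nowhere dense subsets of $X$; by assumption all closed subsets of $X$ are $G_\delta$, so in particular every element of $\mathcal A$ is $G_\delta$. Thus the lemma applies and gives
\[
\One \uparrow G_1(\mathscr N[\mathcal A], \neg\mathcal O(X,\mathcal B)) \iff \cof(\mathcal A; \mathcal B, \subseteq) \leq \omega.
\]

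Next I would unpack the right-hand side in the case $\mathcal B = \{\{x\} : x \in X\}$. A countable family $\{A_n : n \in \omega\} \subseteq \mathcal A$ is cofinal for $\mathcal B$ precisely when, for every $x \in X$, there is some $n$ with $\{x\} \subseteq A_n$; equivalently, $X = \bigcup_{n\in\omega} A_n$. Since each $A_n$ is closed nowhere dense, this is exactly the statement that $X$ is meager. Conversely, any witnessing sequence of closed nowhere dense sets whose union is $X$ serves as a cofinal family.

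Combining the two equivalences yields the claim. No real obstacle arises here; the only point requiring a moment's care is ensuring that the cofinality formulation matches the usual definition of meagerness as a countable union of closed nowhere dense sets, which is immediate once one writes out the containment $\{x\} \subseteq A_n$ as membership $x \in A_n$.
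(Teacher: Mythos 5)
Your proposal is correct and follows exactly the route the paper intends: the paper presents this corollary as a direct application of Lemma \ref{lemma:CoveringPropertyPre}, with the cofinality condition \(\cof(\mathcal A;\mathcal B,\subseteq)\leq\omega\) for \(\mathcal B\) the singletons unpacking, just as you say, to \(X\) being a countable union of closed nowhere dense sets, i.e.\ meager. No issues.
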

\begin{corollary} \label{nullGame}
    Suppose \(X\) is a space, \(\mathcal A\) consists of the \(G_\delta\) \(\mu\)-null sets with respect to a Borel measure \(\mu\), and \(\mathcal B\) is the set of all singleton subsets of \(X\).
    Then One has a winning strategy in \(G_1(\mathscr N[\mathcal A], \neg \mathcal O(X,\mathcal B))\) if and only if \(X\) is \(\mu\)-null; i.e., \(\mu\) is the trivial zero measure.
\end{corollary}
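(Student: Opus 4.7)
The plan is to reduce everything to Lemma \ref{lemma:CoveringPropertyPre}. Since $\mathcal{A}$ is assumed to consist of $G_\delta$ sets, that lemma applies directly and reduces the existence of a winning strategy for One to the cofinality statement $\cof(\mathcal{A}; \mathcal{B}, \subseteq) \leq \omega$. So the entire argument will consist of unpacking what that cofinality condition says under the current choices of $\mathcal{A}$ and $\mathcal{B}$, and then checking each direction against the hypothesis that $X$ is $\mu$-null.

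First I would observe that, with $\mathcal{B}$ the collection of all singletons of $X$, a countable family $\{A_n : n \in \omega\} \subseteq \mathcal{A}$ is cofinal for $\mathcal{B}$ under $\subseteq$ precisely when, for every $x \in X$, there is some $n$ with $\{x\} \subseteq A_n$. Equivalently, $X = \bigcup_{n\in\omega} A_n$ is expressed as a countable union of $G_\delta$ $\mu$-null sets. Thus $\cof(\mathcal{A};\mathcal{B},\subseteq) \leq \omega$ amounts to exactly this kind of countable covering of $X$ by $G_\delta$ $\mu$-null sets.

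From here the two directions are immediate. If such a countable cover exists, countable subadditivity of the Borel measure $\mu$ gives
\[
\mu(X) \leq \sum_{n\in\omega} \mu(A_n) = 0,
\]
so $\mu$ is the trivial zero measure and $X$ is $\mu$-null. Conversely, if $X$ is itself $\mu$-null, then $X$ is a $G_\delta$ subset of itself (being open) and lies in $\mathcal{A}$, so the one-element family $\{X\}$ already witnesses $\cof(\mathcal{A};\mathcal{B},\subseteq) \leq 1 \leq \omega$.

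There is no real obstacle to this proof: the corollary is a direct specialization of Lemma \ref{lemma:CoveringPropertyPre}, and the only nontrivial input is the countable additivity of $\mu$, which is automatic for a Borel measure. The main thing to be careful about is simply confirming that $X \in \mathcal{A}$ whenever $\mu(X) = 0$, which is what makes the reverse direction cost nothing.
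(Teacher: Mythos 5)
Your proposal is correct and is exactly the argument the paper intends: the paper proves this corollary only by remarking that it is a "direct application" of Lemma \ref{lemma:CoveringPropertyPre}, and your unpacking of $\cof(\mathcal A;\mathcal B,\subseteq)\leq\omega$ as "$X$ is a countable union of $G_\delta$ $\mu$-null sets," together with countable subadditivity in one direction and $X\in\mathcal A$ in the other, is the intended specialization.
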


The following summarizes a majority of the results from \cite{ClontzHolshouser}.

\begin{theorem} \label{CH1}
    Suppose \(X\) is a Tychonoff space. Then
    \begin{enumerate}[label=(\roman*)]
        \item \label{Group1CH1} \(G_1(\mathscr N[[X]^{<\omega}], \neg\Omega_X)\), \(G_1(\mathscr N_{C_p(X)}(\mathbf 0), \neg \Omega_{C_p(X),\mathbf 0})\), and \(G_1(\mathscr T_{C_p(X)}, \text{CD}_{C_p(X)})\) are equivalent,
        \item \label{Group2CH1} \(G_1(\Omega_X, \Omega_X)\), \(G_1(\Omega_{C_p(X), \mathbf 0}, \Omega_{C_p(X), \mathbf 0})\), and \(G_1(\mathcal D_{C_p(X)}, \Omega_{C_p(X), \mathbf 0})\) are equivalent,
        \item The two groups of games in \ref{Group1CH1} and \ref{Group2CH1} are dual to each other,
        \item \(\text{I} \underset{\text{pre}}{\uparrow} G_1(\mathscr T_{C_p(X)}, \text{CD}_{C_p(X)})\) iff \(X\) is countable iff \(C_p(X)\) is first countable,
        \item For player One, the games \(G_1(\mathscr N[[X]^{<\omega}], \neg\Gamma_X)\) and \(G_1(\mathscr N_{C_p(X)}(\mathbf 0), \neg \Gamma_{C_p(X),\mathbf 0})\) are equivalent to \(G_1(\mathscr N[[X]^{<\omega}], \neg\Omega_X)\) and \(G_1(\mathscr N_{C_p(X)}(\mathbf 0), \neg \Omega_{C_p(X),\mathbf 0})\),
        \item For player Two, \(G_1(\Omega_X, \Omega_X)\) and \(G_1(\Omega_X, \Gamma_X)\) are equivalent,
        \item \(\text{I} \underset{\text{pre}}{\uparrow} G_1(\Omega_X, \Omega_X)\) if and only if \(\text{I} \uparrow G_1(\Omega_X, \Omega_X)\).
    \end{enumerate}
\end{theorem}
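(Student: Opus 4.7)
The plan is to derive this summary theorem as a direct specialization of the Main Theorems \ref{MD1} and \ref{MD2} to the case $\mathcal{A} = \mathcal{B} = [X]^{<\omega}$, together with a classical computation of the cofinality of the finite subset poset. First I would verify the standing hypotheses of MD1/MD2 in this special case: $[X]^{<\omega}$ is clearly an ideal-base; finite subsets of a Tychonoff space are closed and $G_\delta$; a Tychonoff space is $[X]^{<\omega}$-normal (a standard separation argument using the Tychonoff property to separate a finite set from a disjoint closed set); and every finite set is $\mathbb{R}$-bounded since its continuous image is finite. In particular, all three of the dashed/dotted assumptions in MD1 are in force. Note also that $\mathcal{O}(X,[X]^{<\omega}) = \Omega_X$.

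With this in hand, part \ref{Group1CH1} is the left column of the first diagram in MD1 read off at $\mathcal{A}=\mathcal{B}=[X]^{<\omega}$, and part \ref{Group2CH1} is the right column. Part (iii) is the horizontal equivalence in the same diagram, which in the proof of MD1 is obtained through the reflection duality Theorem \ref{thm:ClontzDuality} together with Corollary \ref{corollary:CFTDual} and Proposition \ref{prop:CD->CDFT}. Parts \ref{Group1CH1}, \ref{Group2CH1} and (iii) thus bundle together as one reading of MD1.

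For part (iv), I would invoke the second diagram of MD1 to get
\[
\text{I} \underset{\text{pre}}{\uparrow} G_1(\mathscr T_{C_p(X)}, \text{CD}_{C_p(X)}) \iff \cof([X]^{<\omega}\times\omega\,;\,[X]^{<\omega}\times\omega,\subseteq)=\omega,
\]
and then observe that the right-hand cofinality is countable iff $[X]^{<\omega}$ itself is countable iff $X$ is countable. Separately, $C_p(X)$ is first countable iff the neighborhood filter at $\mathbf 0$ has countable character, which by Lemma \ref{lemma:CofinalityBetweenGroundAndFunctions} is equivalent to the same cofinality condition, and classically to $|X|\leq\aleph_0$. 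Parts (v) and (vi) are immediate from Corollary \ref{lem:Open=Gamma}, which equates the $\mathcal{O}$-version and $\Gamma$-version of the point-open game for player One, and by the duality in (iii), transfers to a two-sided equivalence between $G_1(\Omega_X,\Omega_X)$ and $G_1(\Omega_X,\Gamma_X)$ for player Two.

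The one part that does not follow mechanically from MD1 is (vii), which collapses full strategies and predetermined strategies for player One in $G_1(\Omega_X,\Omega_X)$. The hard part will be that Lemma \ref{lemma:Pawlikowski} is only known in the $\omega$-cover and $k$-cover cases, so I would need to invoke its revised version (the $\omega$-covers instance, i.e., $\mathcal A = \mathcal B = [X]^{<\omega}$, as referenced via \href{https://arxiv.org/abs/2102.00296}{arXiv:2102.00296}) to conclude
\[
\text{I} \uparrow G_1(\Omega_X,\Omega_X) \iff \text{I} \underset{\text{pre}}{\uparrow} G_1(\Omega_X,\Omega_X).
\]
Everything else is a matter of re-indexing the diagrams from MD1 and MD2 with the finite-set ideal-base, and tying part (iv) to the classical characterization of first countability of $C_p(X)$.
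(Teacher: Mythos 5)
Your route is legitimately different from the paper's: the paper offers no proof of Theorem \ref{CH1} at all, presenting it as a summary of Clontz--Holshouser results, whereas you re-derive it by specializing the paper's own machinery (Theorems \ref{MD1} and \ref{MD2} with \(\mathcal A = \mathcal B = [X]^{<\omega}\), plus the \(\omega\)-cover case of Lemma \ref{lemma:Pawlikowski} for part (vii) and the classical cofinality computation for part (iv)). That is exactly the kind of specialization the authors themselves perform for the \(\sigma\)-compactness theorem that follows, and your handling of (iv) via the second diagram only, and of (vii) via the revised Pawlikowski lemma rather than any cross-diagram collapse, is the right way to do it. Two attributions are slightly loose but harmless: part \ref{Group1CH1} needs the left column of \emph{both} main theorems (or, more directly, the full \(\leq_{\Two}\)-cycles of Corollaries \ref{lemma:Open=Large} and \ref{corollary:PO=Gru=CD}) to get equivalence for both players at both strategy levels, and the \(C_p\)-half of part (v) (the two Gruenhage-type games at \(\mathbf 0\)) is not covered by Corollary \ref{lem:Open=Gamma} alone --- it requires the chain through Corollaries \ref{corollary:PO=Gru=CD} and \ref{corollary:Gru<PointGamma}, i.e., the dashed arrows of the diagrams, which is consistent with your ``re-index the diagrams'' framing.

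One assertion in your hypothesis check is simply false and must be deleted: finite subsets of a Tychonoff space need \emph{not} be \(G_\delta\) (the paper's own Note gives the one-point Lindel\"{o}fication of \(\omega_1\), where \(\{\omega_1\}\) is closed but not \(G_\delta\); points of \(\beta\mathbb N \setminus \mathbb N\) give another example). Consequently ``all three'' extra hypotheses of Theorem \ref{MD1} are not in force --- only \(\mathcal A\)-normality and \(\mathbb R\)-boundedness are, which is all the dashed and dotted arrows require. Had you leaned on the \(G_\delta\) hypothesis to invoke the final ``all statements across both diagrams are equivalent'' clause of Theorem \ref{MD1} (which runs through Lemma \ref{lemma:CoveringPropertyPre}), you would have ``proved'' statements that are false for uncountable Tychonoff spaces without \(G_\delta\) points, e.g.\ that \(\One \uparrow G_1(\mathscr N[[X]^{<\omega}], \neg\Omega_X)\) implies \(X\) is countable. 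Fortunately nothing in Theorem \ref{CH1} needs that collapse, and as written you never actually use it: (iv) stays inside the second diagram and (vii) goes through Lemma \ref{lemma:Pawlikowski}. So the error is not load-bearing, but the claim should be struck and the hypothesis list corrected.
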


The following summarizes a majority of the results from \cite{CaruvanaHolshouser}.

\begin{theorem} \label{CH2}
    Suppose \(X\) is a Tychonoff space. Then
    \begin{enumerate}[label=(\roman*)]
        \item \label{Group1CH2} \(G_1(\mathscr N[K(X)], \neg\mathcal K_X)\), \(G_1(\mathscr N_{C_k(X)}(\mathbf 0), \neg \Omega_{C_k(X),\mathbf 0})\), and \(G_1(\mathscr T_{C_k(X)}, \text{CD}_{C_k(X)})\) are equivalent,
        \item \label{Group2CH2} \(G_1(\mathcal K_X, \mathcal K_X)\), \(G_1(\Omega_{C_k(X), \mathbf 0}, \Omega_{C_k(X), \mathbf 0})\), and \(G_1(\mathcal D_{C_k(X)}, \Omega_{C_k(X), \mathbf 0})\) are equivalent,
        \item The two groups of games in \ref{Group1CH2} and \ref{Group2CH2} are dual to each other,
        \item \(\text{I} \underset{\text{pre}}{\uparrow} G_1(\mathscr T_{C_k(X)}, \text{CD}_{C_k(X)})\) iff \(X\) is hemicompact iff \(C_k(X)\) is first-countable,
        \item For player One, \(G_1(\mathscr N[K(X)], \neg\Gamma_k(X))\) and \(G_1(\mathscr N_{C_k(X)}(\mathbf 0), \neg \Gamma_{C_k(X),\mathbf 0})\) are equivalent to \(G_1(\mathscr N[K(X)], \neg\mathcal K_X)\) and \(G_1(\mathscr N_{C_k(X)}(\mathbf 0), \neg \Omega_{C_k(X),\mathbf 0})\),
        \item For player Two, \(G_1(\mathcal K_X, \mathcal K_X)\) and \(G_1(\mathcal K_X, \Gamma_k(X))\) are equivalent,
        \item \(\text{I} \underset{\text{pre}}{\uparrow} G_1(\mathcal K_X, \mathcal K_X)\) if and only if \(\text{I} \uparrow G_1(\mathcal K_X, \mathcal K_X)\).
    \end{enumerate}
\end{theorem}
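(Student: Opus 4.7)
The plan is to obtain Theorem~\ref{CH2} as a direct specialization of the Main Diagrams (Theorems~\ref{MD1} and \ref{MD2}) with $\mathcal A = \mathcal B = K(X)$. First I will verify the three standing hypotheses: $K(X)$ is an ideal-base since finite unions of compact sets are compact; $K(X)$ consists of closed sets since compact subsets of a Hausdorff space are closed; and because $X$ is Tychonoff, any compact set can be separated from a disjoint closed set by a continuous function into $[0,1]$ (extending the Tychonoff property from points to compact sets via a finite $\min$ construction), so $X$ is $K(X)$-normal. The dotted condition also holds, since continuous real-valued images of compact sets are compact in $\mathbb R$, hence bounded, so $K(X)$ consists of $\mathbb R$-bounded sets. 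Under the Remark's identifications $\mathcal O(X,K(X)) = \mathcal K_X$ and $\Gamma(X,K(X)) = \Gamma_k(X)$, the left column of the first diagram of Theorem~\ref{MD1} yields \ref{Group1CH2}, the right column yields \ref{Group2CH2}, the horizontal arrows yield the duality asserted in part~(iii), and the top-horizontal equivalences for player One give part~(v) while those for player Two give part~(vi).

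For part~(iv), I will read off the second diagram of Theorem~\ref{MD1}, which (with the same instantiation) asserts that $\One \underset{\text{pre}}{\uparrow} G_1(\mathscr T_{C_k(X)}, \text{CD}_{C_k(X)})$ is equivalent to $\cof(K(X)\times\omega; K(X)\times\omega, \subseteq) = \omega$ and to $\cof(\mathscr N_{C_k(X)}(\mathbf 0); \mathscr N_{C_k(X)}(\mathbf 0), \supseteq) = \omega$. The first cofinality equation is plainly equivalent to $\cof(K(X); K(X), \subseteq) = \omega$, which is the definition of hemicompactness; the second is the assertion that $\mathbf 0$ has a countable neighborhood base in $C_k(X)$, which by homogeneity (since $C_k(X)$ is a topological group under pointwise addition) is equivalent to first countability of $C_k(X)$.

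Finally, part~(vii) requires passing from $\One \underset{\text{pre}}{\uparrow} G_1(\mathcal K_X,\mathcal K_X)$ to $\One \uparrow G_1(\mathcal K_X,\mathcal K_X)$. One direction is trivial; for the nontrivial direction I will invoke the corrected version of Lemma~\ref{lemma:Pawlikowski} (flagged in red in the excerpt) which is established for $k$-covers in arXiv:2102.00296. Since both cover types here are $k$-covers, that lemma gives $\One \uparrow G_1(\mathcal K_X, \mathcal K_X)$ if and only if $\One \underset{\text{pre}}{\uparrow} G_1(\mathcal K_X, \mathcal K_X)$.

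The main obstacle is part~(vii): everything else is a direct translation from the Main Diagrams once the specialization is in place, but (vii) is precisely the statement that the Main Diagrams cannot collapse on their own without a Pawlikowski-style strengthening argument, and here we must rely on the $k$-cover-specific argument of the revised note rather than the general form of Lemma~\ref{lemma:Pawlikowski} that is still open. A secondary minor point worth care is checking $K(X)$-normality of Tychonoff $X$ with a clean proof, but this is classical.
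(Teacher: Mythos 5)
Your proposal is correct, but it does not follow the paper's own treatment of this statement: in the paper, Theorem \ref{CH2} carries no proof at all — it is explicitly a summary of results imported from \cite{CaruvanaHolshouser}. What you do instead is re-derive it internally by specializing Theorems \ref{MD1} and \ref{MD2} to \(\mathcal A = \mathcal B = K(X)\), which is precisely the pattern the paper itself uses only for the subsequent \(\sigma\)-compactness theorem (with the pair \(K(X)\), \([X]^{<\omega}\)). Your verification of the standing hypotheses is right: \(K(X)\) is an ideal-base of closed sets, Tychonoff spaces are \(K(X)\)-normal by the standard finite \(\min\) argument, and compact sets are \(\mathbb R\)-bounded, so every dashed and dotted arrow in all of the diagrams is available under this instantiation. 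Two points of precision are worth making. First, the ``equivalence'' of the games in \ref{Group1CH2} and \ref{Group2CH2} and the duality in (iii) require all four diagrams — both diagrams of Theorem \ref{MD1} \emph{and} both diagrams of Theorem \ref{MD2} — since the first diagrams only govern One's full strategies on the left and Two's on the right, with the limited-information and opposite-player statements living in the other diagrams; you invoke both theorems at the outset, so this is phrasing rather than a gap, but ``the left column of the first diagram of MD1 yields (i)'' understates what is being used. Similarly, (v) and (vi) come from the vertical equivalences within the columns (Corollary \ref{lem:Open=Gamma}, Corollary \ref{corollary:Gru<PointGamma} together with the dashed reversals), not from the horizontal duality arrows. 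Second, your treatment of (iv) and (vii) is exactly what the internal derivation demands: (iv) reads off the second diagram of \ref{MD1} plus the observations that \(\cof(K(X)\times\omega; K(X)\times\omega,\subseteq)=\omega\) is hemicompactness and that countable character at \(\mathbf 0\) gives first countability of the homogeneous space \(C_k(X)\); and (vii) genuinely cannot come from the diagrams alone — it needs the \(k\)-cover case of the red-flagged Lemma \ref{lemma:Pawlikowski} (equivalently the correction stated in Theorem \ref{MD2} for \(\mathcal A=\mathcal B=K(X)\)), which you correctly isolate as the one external ingredient. The trade-off between the two routes is that the paper's citation is shorter but opaque, while your specialization makes Theorem \ref{CH2} an honest corollary of the present paper's machinery, at the cost of having to track which arrows need which hypotheses.
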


Notice that the property of being \(\sigma\)-compact lies in between being countable and being hemicompact.
If we use the fact that Theorems \ref{MD1} and \ref{MD2} apply to pairs \(\mathcal A\) and \(\mathcal B\), then we can generate a setup which characterizes \(\sigma\)-compactness in way that is similar to Theorems \ref{CH1} and \ref{CH2}.

\begin{theorem}
    Suppose \(X\) is a Tychonoff space. Then
    \begin{enumerate}[label=(\roman*)]
        \item \label{Group1} \(G_1(\mathscr N[K(X)], \neg\Omega_X)\), \(G_1(\mathscr N_{C_k(X)}(\mathbf 0), \neg \Omega_{C_p(X),\mathbf 0})\), and \(G_1(\mathscr T_{C_k(X)}, \text{CD}_{C_p(X)})\) are equivalent,
        \item \label{Group2} \(G_1(\mathcal K_X, \Omega_X)\), \(G_1(\Omega_{C_k(X), \mathbf 0}, \Omega_{C_p(X), \mathbf 0})\), and \(G_1(\mathcal D_{C_k(X)}, \Omega_{C_p(X), \mathbf 0})\) are equivalent,
        \item The two groups of games in \ref{Group1} and \ref{Group2} are dual to each other,
        \item \(\text{I} \underset{\text{pre}}{\uparrow} G_1(\mathscr T_{C_k(X)}, \text{CD}_{C_p(X)})\) iff \(X\) is \(\sigma\)-compact iff \(\cof(\mathscr N_{C_k(X)}(\mathbf 0); \mathscr N_{C_p(X)}(\mathbf 0), \supseteq) = \omega\),
        \item For player One, the games \(G_1(\mathscr N[K(X)], \neg\Gamma_X)\) and \(G_1(\mathscr N_{C_k(X)}(\mathbf 0), \neg \Gamma_{C_p(X),\mathbf 0})\) are equivalent to \(G_1(\mathscr N[K(X)], \neg\Omega_X)\) and \(G_1(\mathscr N_{C_k(X)}(\mathbf 0), \neg \Omega_{C_p(X),\mathbf 0})\), and
        \item For player Two, \(G_1(\mathcal K_X, \Omega_X)\) and \(G_1(\mathcal K_X, \Gamma_X)\) are equivalent.
    \end{enumerate}
\end{theorem}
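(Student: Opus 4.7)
The plan is to specialize Theorem \ref{MD1} to \(\mathcal A = K(X)\) and \(\mathcal B = [X]^{<\omega}\). First I would verify the hypotheses so that every solid, dashed, and dotted arrow in both diagrams of Theorem \ref{MD1} is active: both collections are ideal-bases (finite unions of compact sets are compact, finite unions of finite sets are finite); \(K(X)\) consists of closed sets because \(X\) is Hausdorff; \([X]^{<\omega}\) trivially consists of \(\mathbb R\)-bounded sets; and \(X\) is \(K(X)\)-normal since in a Tychonoff space any compact set and disjoint closed set can be functionally separated (for each \(x\) in the compact set choose a Tychonoff function, extract a finite subcover, take the pointwise minimum, and compose with a bump function). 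I would also record the identifications \(\mathcal O(X,K(X)) = \mathcal K_X\), \(\mathcal O(X,[X]^{<\omega}) = \Omega_X\), \(\Gamma(X,[X]^{<\omega}) = \Gamma_X\), \(C_{K(X)}(X) = C_k(X)\), and \(C_{[X]^{<\omega}}(X) = C_p(X)\).

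With these substitutions, items (i), (ii), and (iii) read directly off the first diagram of Theorem \ref{MD1}: item (i) is the equivalence within its left column, item (ii) is the equivalence within its right column, and item (iii) is the horizontal duality between the two columns (which ultimately traces back through Theorem \ref{thm:ClontzDuality}, Corollary \ref{corollary:CFTDual}, and Proposition \ref{prop:CD->CDFT}). Items (v) and (vi) follow from Corollary \ref{lem:Open=Gamma}, which collapses the \(\mathcal O\), \(\Lambda\), and \(\Gamma\) versions of the generalized point-open game for player One, together with the dual statement transferring the collapse to player Two on the Rothberger side.

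Finally, for item (iv), the second diagram of Theorem \ref{MD1} yields
\[
\One \underset{\text{pre}}{\uparrow} G_1(\mathscr T_{C_k(X)}, \text{CD}_{C_p(X)}) \iff \cof(\mathscr N_{C_k(X)}(\mathbf 0); \mathscr N_{C_p(X)}(\mathbf 0), \supseteq) = \omega \iff \cof(K(X) \times \omega; [X]^{<\omega} \times \omega, \subseteq) = \omega,
\]
and I would finish by identifying the last condition with \(\sigma\)-compactness. Discarding the \(\omega\) factor (using the first Tukey lemma in one direction and the explicit cofinal enumeration \(\{(K_n, m) : n, m \in \omega\}\) in the other) reduces this to \(\cof(K(X);[X]^{<\omega},\subseteq) \leq \omega\). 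Any witnessing countable family \(\{K_n\}\) of compact sets must contain every singleton and so covers \(X\), giving \(\sigma\)-compactness; conversely, if \(X = \bigcup_n K_n\) is \(\sigma\)-compact, replacing \(K_n\) by \(\bigcup_{k \leq n} K_k\) yields a monotone chain in which every finite subset eventually appears. The main obstacle is purely verificational, namely confirming the \(K(X)\)-normality hypothesis in a Tychonoff space; the rest of the theorem is essentially a read-off from Theorem \ref{MD1}.
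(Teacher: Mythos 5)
Your proposal is correct and follows the paper's intended route: the paper gives no separate proof of this theorem, presenting it as the specialization \(\mathcal A = K(X)\), \(\mathcal B = [X]^{<\omega}\) of the main theorems, and your verification of the hypotheses (both collections are ideal-bases, compact sets are closed, Tychonoff spaces are \(K(X)\)-normal, finite sets are \(\mathbb R\)-bounded) together with the identification of \(\cof(K(X)\times\omega;[X]^{<\omega}\times\omega,\subseteq)=\omega\) with \(\sigma\)-compactness is exactly the content that needs to be supplied. One small citation point: since the diagrams of Theorem \ref{MD1} only record One's strategies for the group-(i) games and Two's strategies for the group-(ii) games, the full ``equivalence of games'' asserted in (i) and (ii) (which also concerns Two's full/Markov strategies in group (i) and One's full/predetermined strategies in group (ii)) additionally requires Theorem \ref{MD2} --- equivalently, the \(\leq_{\Two}\) cycles of Corollaries \ref{corollary:Roth=CFT=CDFT} and \ref{corollary:PO=Gru=CD} --- whose hypotheses are exactly the ones you have already checked, so no new argument is needed.
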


\section{Open Questions}
\begin{itemize}
    \item Is there a topological characterization of the statement \(\cof(\mathcal A; \mathcal B, \leq) \leq_T \omega^\omega\)?
    \item Does \(\text{I} \uparrow G_1(\mathcal K_X, \Omega_X)\) imply \(\text{I} \underset{\text{pre}}{\uparrow} G_1(\mathcal K_X, \Omega_X)\)?
    \item More broadly, to what extent can the Pawlikowski generalization presented here be further generalized?
    \item If \(\mathcal A\) is an ideal base, are \(G_1(\mathscr N[\mathcal A], \neg \Gamma(X,\mathcal B))\) and \(G_1(\mathscr N[\mathcal A], \neg \mathcal O(X,\mathcal B))\) equivalent for player Two?
    \item Can the assumption that \(\mathcal B\) consists of \(\mathbb R\)-bounded sets be removed from Theorems \ref{MD1} and \ref{MD2}?
    \item To what extent can the techniques in this paper be used to study more complex selection principles like the Hurewicz property or the \(\alpha\)-Fr{\'{e}}chet properties?
\end{itemize}

\providecommand{\bysame}{\leavevmode\hbox to3em{\hrulefill}\thinspace}
\providecommand{\MR}{\relax\ifhmode\unskip\space\fi MR }
% \MRhref is called by the amsart/book/proc definition of \MR.
\providecommand{\MRhref}[2]{%
  \href{http://www.ams.org/mathscinet-getitem?mr=#1}{#2}
}
\providecommand{\href}[2]{#2}

\end{document}